\DeclareMathOperator{\supp}{supp} 
\DeclareMathOperator{\diag}{diag}
\DeclareMathOperator{\dist}{d} 
\newcommand{\la}{\langle} \newcommand{\ra}{\rangle}
 \newcommand{\eps}{\varepsilon}
\newcommand{\ls}{\lesssim} \newcommand{\gs}{\gtrsim}
\newcommand{\R}{\mathbb{R}} \newcommand{\C}{\mathbb{C}}
\newcommand{\N}{\mathbb{N}} \newcommand{\Z}{\mathbb{Z}}
\newcommand{\al}{\alpha} \newcommand{\om}{\omega}
\newcommand{\lam}{\lambda} \newcommand{\ka}{\kappa}
\newcommand{\F}{\mathcal{F}}
\newtheorem{thm}{Theorem} \newtheorem{cor}[thm]{Corollary}
\newtheorem{pro}[thm]{Proposition} \newtheorem{lem}[thm]{Lemma}
\theoremstyle{remark}
\theoremstyle{definition}
\numberwithin{equation}{section} \numberwithin{thm}{section}
\begin{document}

\title[The cubic Dirac equation in $H^1(\mathbb{R}^3)$]{
The cubic
  Dirac equation:\\ Small initial data in $H^1(\mathbb{R}^3)$}

\author[I. Bejenaru]{Ioan Bejenaru} \address[I. Bejenaru]{Department
  of Mathematics, University of California, San Diego, La Jolla, CA
  92093-0112 USA} \email{ibejenaru@math.ucsd.edu}

\author[S. Herr]{Sebastian Herr} \address[S. Herr]{Fakult\"at f\"ur
  Mathematik, Universit\"at Bielefeld, Postfach 10 01 31, 33501
  Bielefeld, Germany} \email{herr@math.uni-bielefeld.de}

\thanks{The first author was supported in part by NSF grant DMS-1001676.
The second author acknowledges support from the German Research
Foundation, Collaborative Research Center 701.}

\begin{abstract}
  We establish global well-posedness and scattering
  for the cubic Dirac equation for small data in the critical space
  $H^1(\R^3)$. The main ingredient is obtaining a sharp
  end-point Strichartz estimate for the Klein-Gordon equation. In a classical sense
  this fails and it is related to the failure of the endpoint Strichartz estimate for the
  wave equation in space dimension three. In this
  paper, systems of coordinate frames are constructed in which
  endpoint Strichartz estimates are recovered and energy estimates are
  established.
\end{abstract}

\subjclass[2010]{35Q41 (Primary); 35Q40, 35L02, 35L05 (Secondary)}
\keywords{Klein-Gordon equation, cubic Dirac equation, Strichartz
  estimate, well-posedness, scattering}

\maketitle

\section{Introduction and main results}\label{sect:intro}
For $m >0$, consider the scalar homogeneous Klein-Gordon equation
\begin{equation} \label{KG} \Box u(t,x) + m^2 u(t,x) =0, \qquad (t,x)
  \in \R \times \R^n.
\end{equation}
A fundamental problem is the validity of Strichartz estimates for
solutions of this equation. In the low frequency regime, the
dispersive properties of the Klein-Gordon equation are similar to the
Schr\"odinger equation, while in the high frequency regime they are
similar to the wave equation (this will be detailed later in the
paper). This hints at the range of available Strichartz estimates for
\eqref{KG}.

In dimensions $n \geq 4$, it is known that all the Strichartz
estimates including the end-point hold true both for the Schr\"odinger
and the wave equation \cite{KT98}.  Therefore all the Strichartz
estimates including the end-point hold true for the Klein-Gordon
equation as well.

A major problem arises in dimension $n=3$ since the endpoint
Strichartz estimate $L^2_t L^\infty_x$ fails for the wave equation due
to the slow dispersion of type $t^{-1}$. On the positive side, the
end-point Strichartz estimate $L^2_t L^6_x$ holds true for the
Schr\"odinger equation. Therefore, the problem one encounters for the
Klein-Gordon equation is in the high frequency regime only.

Strichartz estimates lead to well-posedness results for various
nonlinear equations.  The endpoint Strichartz estimate plays a crucial
role in certain critical problems.  The application discussed in this
paper, the cubic Dirac equation, is such an example. In fact this
equation motivated our research in the direction of obtaining a
replacement for the false endpoint Strichartz estimate for \eqref{KG}.

In a future work we will address the same problem in two dimensions
where the $L^2_t L^\infty_x$ estimate fails for the Schr\"odinger
equation and it is not even the correct end-point for the wave
equation.

Throughout the rest of this paper the physical dimension is set to
$n=3$ and the mass is fixed to $m=1$ in \eqref{KG}. By rescaling,
estimates for any other $m \ne 0$ can be obtained. It is well-known
that in the case of the wave equation, \[\Box u=0, u(0,x)=f_0(x),
u_t(0,x)=f_1(x),\] the end-point Strichartz estimate
\begin{equation} \label{EPS} \| u \|_{L^2_t L^\infty_x} \ls \| \nabla
  f_0 \|_{L^2} + \| f_1 \|_{L^2}
\end{equation}
does not hold true, see \cite{M-S}. In fact it fails for any $P(D) u$
where $P(D)$ is a Fourier multiplier whose symbol lies in
$C_0^\infty$, vanishes near the origin and it is not identically zero,
see \cite{Tao-bil}. In particular it fails for $P_k u$, where $P_k$ is
the standard Fourier multiplier localizing at frequency $|\xi| \approx
2^k$, see Subsection \ref{subsect:not}. As a consequence the estimate
\eqref{EPS} cannot hold true for \eqref{KG} either.  To be more
precise, the estimate \eqref{EPS} for $P_k u$ with a bound independent
of $k$ cannot be true. This obstruction comes as $k \rightarrow
\infty$ where the symbol of the Klein-Gordon equation is essentially
the same as the one for the wave equation.

An important observation needs to be made here. While for the wave
equation \eqref{EPS} is false regardless on how much regularity is
added to the right hand side, that is to $f_0,f_1$, some extra
regularity fixes the estimate for the Klein-Gordon equation.  To be
more precise, if
\[
(\Box+1)u=0, u(0,x)=f_0(x), u_t(0,x)=f_1(x),
\]
the end-point Strichartz estimate
\begin{equation} \label{EPSK} \| P_k u \|_{L^2_t L^\infty_x}
  \ls_\epsilon 2^{(1+\epsilon) k} \| P_k f_0 \|_{L^2} + 2^{\epsilon k}
  \| P_k f_1 \|_{L^2}, \qquad k \geq 0,
\end{equation}
holds true for any $\epsilon > 0$, see \cite{MNO}. But this fails to
be true for $\epsilon=0$!

Our goal in this paper is to provide a lucrative replacement for
\eqref{EPSK} in the case $\epsilon=0$ and for its inhomogeneous
counterpart. This will done in adapted frames in Section \ref{endpnt},
see Theorem \ref{thm:estr}. In applications to nonlinear problems, the
end-point Strichartz estimate is used in conjunction with the energy
estimate $L^\infty_t L^2_x$ to generate the bilinear $L^2_{t,x}$
estimate
\[
\| u \cdot v \|_{L^2_{t,x}} \leq \| u \|_{L^2_t L^\infty_x} \| u
\|_{L^\infty_t L^2_x}.
\]
Since the $L^2 L^\infty$ estimate is generated in adapted frames, one
has to derive energy estimates in similar frames in order to recoup
the above $L^2_{t,x}$ bilinear estimate. We will provide this type of
energy estimates in Subsection \ref{Energy}. In fact, the combination
of the energy and the Strichartz estimate to a uniform $L^2$ estimate
is only possible by using a null structure, see Subsection
\ref{subsect:nullst}.

The use of adapted frames to generate a replacement for the missing
$L^2_t L^\infty_x$ end-point Strichartz estimate was initiated by
Tataru \cite{tat} in the context of the Wave Map problem. Another
context in which such estimates were derived was the Schr\"odinger Map
problem, see \cite{bikt}. Our work is closer in spirit to the work of
Tataru \cite{tat}, although the geometry of the characteristic surface
for the Klein-Gordon equation requires a more involved
construction.

As an application, we study the cubic Dirac equation which we describe
below.  For $M> 0$, the cubic Dirac equation for the spinor field
$\psi: \R^4 \to \C^4$ is given by
\begin{equation} \label{eq:dirac} (-i \gamma^\mu \partial_\mu + M )
  \psi= \la \gamma^0 \psi, \psi \ra \psi,
\end{equation}
where we use the summation convention. Here, $\gamma^\mu\in
\C^{4\times 4}$ are the Dirac matrices given by
\[
\gamma^0= \left( \begin{array}{cc} I_2 & 0 \\ 0 & -I_2 \end{array}
\right) , \qquad \gamma^j=\left( \begin{array}{cc} 0 & \sigma^j \\
    -\sigma^j & 0 \end{array} \right)
\]
where
\[
\sigma^1= \left( \begin{array}{cc} 0 & 1 \\ 1 & 0 \end{array} \right)
, \qquad \sigma^2=\left( \begin{array}{cc} 0 & -i \\ i & 0 \end{array}
\right) , \qquad \sigma^3=\left( \begin{array}{cc} 1 & 0 \\ 0 &
    -1 \end{array} \right)
\]
are the Pauli matrices. The $\langle \cdot, \cdot \rangle$ is the
standard scalar product on $\C^4$, hence $\la \gamma^0 \psi, \psi \ra=
|\psi_1|^2+|\psi_2|^2 - |\psi_3|^2-|\psi_4|^2 \in \R$. It then follows
that $\la \gamma^0 \psi, \psi \ra$ equals its conjugate which is
written as $\bar \psi \psi= \psi^\dag \gamma^0 \psi$, where $\bar
\psi=\psi^\dag \gamma^0$ and $\psi^\dag$ is the conjugate transpose of
$\psi$. The conclusion is that $\la \gamma^0 \psi, \psi \ra= \psi^\dag
\gamma^0 \psi$ and we made this point so as to avoid confusion between
the two apparently different ways the nonlinear term appears in
literature.

The matrices $\gamma^\mu$ satisfy the following properties
\[
\gamma^\alpha \gamma^\beta + \gamma^\beta \gamma^\alpha = 2 g^{\alpha
  \beta} I_4, \qquad (g^{\alpha \beta})= \diag(1,-1,-1,-1).
\]
The physical background for this equation is provided in
\cite{flr,soler}.  Existence and stability of bound state solutions of
\eqref{eq:dirac} has been investigated in \cite{sv,cv,M88}.

Using scaling arguments, it turns out that the problem becomes
critical in $H^1(\R^3)$. Local well-posedness was obtained in
$H^s(\R^3)$, $s > 1$ (subcritical range) in \cite{ev}. Global
well-posedness and scattering was proved in \cite{MNO} for small
initial data in $H^s(\R^3)$, $s >1$ as well as for small initial data
in $H^1(\R^3)$ with some regularity in the angular variable in \cite{MNNO}.

The main idea in the above mentioned papers is as follows. The linear
part of the Dirac equation is closely related to a half-Klein-Gordon
equation.  In the subcritical case one can make use of the
\eqref{EPSK} with $\epsilon >0$, while in the critical case certain
spherically averaged versions \eqref{EPSK} with $\epsilon=0$ hold
true, see \cite{MNNO,KaOz}, which is similar to the Schr\"odinger case
\cite{Tao2} in dimension $n=2$.
  
Both of the above strategies reach their limitations when one
considers the \eqref{eq:dirac} with small but general data in
$H^1(\R^3)$, cp. \cite[p.~181, l.~1-5]{MNO}. Using our strategy to fix
\eqref{EPSK} in the case $\epsilon=0$ and the null structure exhibited
by the nonlinearity we are able to prove the following result in the
critical space:

\begin{thm}\label{thm:main}
  The initial value problem associated to the cubic Dirac equation
  \eqref{eq:dirac} is globally well-posed for small initial data in
  $H^1(\R^3)$. Moreover, small solutions scatter to free solutions for
  $t\to \pm \infty$.
\end{thm}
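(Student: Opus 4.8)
\emph{The plan.} Diagonalize the linear part of \eqref{eq:dirac} into a system of half-Klein-Gordon equations, run a contraction in a frequency-localized space built on the endpoint Strichartz and energy estimates of Section~\ref{endpnt}, and close the nonlinear estimates using the spinorial null structure. Concretely, after rescaling the Dirac mass to $1$ and multiplying \eqref{eq:dirac} on the left by $\gamma^0$, the equation takes the Hamiltonian form $i\partial_t\psi=H(D)\psi+\gamma^0\la\gamma^0\psi,\psi\ra\psi$, whose symbol satisfies $H(\xi)^2=\langle\xi\rangle^2I_4$. The spectral projectors $\Pi_\pm(D)=\tfrac12\big(I_4\pm\langle D\rangle^{-1}H(D)\big)$ split $\psi=\psi_++\psi_-$ into pieces $\psi_\pm=\Pi_\pm(D)\psi$ solving
\[
(i\partial_t\mp\langle D\rangle)\psi_\pm=\Pi_\pm(D)\big(\gamma^0\la\gamma^0\psi,\psi\ra\psi\big),
\]
which carry exactly the dispersion of \eqref{KG}; scattering will be read off from the associated Duhamel formula once the nonlinearity is small in the resolution norm.

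\emph{The resolution space.} At each dyadic frequency $2^k$ one works in the adapted coordinate frames of Section~\ref{endpnt}: $P_k\psi_\pm$ is further split into angular caps, on each of which Theorem~\ref{thm:estr} supplies the endpoint $L^2_tL^\infty_x$ estimate and Subsection~\ref{Energy} the companion frame energy estimate. Taking an $\ell^2$ sum over caps, an $\ell^2$ sum in $k$ with weight $\langle 2^k\rangle$, and a $U^2$/$V^2$-type atomic structure to absorb the Duhamel integral defines a Banach space $\F$ with a dual ``nonlinearity'' space $\mathcal{N}$, in which the homogeneous bound $\|e^{\pm it\langle D\rangle}f\|_{\F}\ls\|f\|_{H^1}$ and the inhomogeneous bound $\|\,\cdot\,\|_{\F}\ls\|\,\cdot\,\|_{\mathcal{N}}$ hold by construction. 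This space $\F$ is the concrete substitute for \eqref{EPSK} in the forbidden case $\eps=0$.

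\emph{The trilinear estimate, and the main obstacle.} Everything reduces to
\[
\big\|\Pi_{\eps_0}(D)\big(\gamma^0\la\gamma^0\psi_1,\psi_2\ra\,\psi_3\big)\big\|_{\mathcal{N}}\ls\prod_{j=1}^3\|\psi_j\|_{\F},
\]
uniformly in the signs $\eps_0,\eps_1,\eps_2,\eps_3\in\{+,-\}$ of the four spectral projections. After a Littlewood-Paley trichotomy, the benign configurations follow from the bilinear bound $\|uv\|_{L^2_{t,x}}\le\|u\|_{L^2_tL^\infty_x}\|v\|_{L^\infty_tL^2_x}$ of the introduction combined with the frame Strichartz and energy estimates. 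The dangerous ones are the resonant high$\times$high$\to$low and high$\times$high$\to$high interactions, where the adapted frames concentrate along the resonant directions; there the spinorial null structure of Subsection~\ref{subsect:nullst} is decisive, because the symbol $\Pi_{\eps}(\xi)\gamma^0\Pi_{\eps'}(\eta)$ vanishes to first order in the angle between $\xi$ and $\eta$ when $\eps=\eps'$ and is $O(\langle\xi+\eta\rangle\langle\xi\rangle^{-1})$ when $\eps\ne\eps'$, which exactly compensates the frame concentration. The heaviest point --- and the reason the geometry of the Klein-Gordon hyperboloid, rather than the light cone, forces the more elaborate construction announced in the abstract --- is to carry this out with frames attached to three distinct frequency blocks, summing the angular and frequency parameters by almost-orthogonality of the cap decompositions. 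Granting the trilinear estimate, the Duhamel map is a contraction on a small ball of $\F$ for small $H^1$ data, giving a unique global solution with continuous dependence; and since the nonlinearity lies in $\mathcal{N}$ while $\F$ carries the atomic $U^2$-structure, $\psi_\pm(t)$ converges in $H^1$ to free Klein-Gordon evolutions as $t\to\pm\infty$, which is the asserted scattering.
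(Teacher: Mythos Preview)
Your outline matches the paper's strategy: reduce to the half-Klein-Gordon system via $\Pi_\pm(D)$, build resolution spaces from the frame Strichartz and energy estimates of Section~\ref{sect:le}, prove a trilinear estimate exploiting the spinorial null structure (the paper's Theorem~\ref{thm:thnon}), and close by contraction. Two points of divergence are worth flagging. First, the paper does \emph{not} use a $U^2/V^2$ atomic structure; its resolution space $S^{\pm,1}$ (Section~\ref{fspaces}) is instead built from $\dot X^{\pm,\frac12,\infty}$, cap-localized $L^\infty_{t_{\lambda,\omega}}L^2_{x_{\lambda,\omega}}$ norms in the adapted frames, and an auxiliary $L^{4/3}_tL^2_x$ component, with a bespoke atomic dual $N^{\pm,at}_k$; the linear mapping property is Proposition~\ref{linl}, and the duality $S^\mp_k\subset(N^{\pm,at}_k)^*$ replaces the $U^2$--$V^2$ pairing you invoke. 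Either framework can be made to work, but the paper's choice is tailored so that the inhomogeneous frame energy estimate (Theorem~\ref{thm:Energy} iii)) feeds directly into the atomic decomposition. Second, your scattering argument is cleaner than the paper's: the paper does not read off scattering from the solution space directly but instead approximates the $H^1$ data by $H^2$ data, invokes the known $H^2$ scattering of \cite{MNO}, and passes to the limit via a Cauchy-sequence argument. Your direct route (nonlinearity in $\mathcal N$ forces convergence of the profile) is the standard one when the space genuinely has $U^2$-type atoms; with the paper's spaces as written this would require an extra lemma. Finally, your description of the null symbol is slightly off: in the paper both sign combinations yield an $O(\angle(\pm\xi,\eta))+O(\langle\xi\rangle^{-1}+\langle\eta\rangle^{-1})$ bound (Lemma in Subsection~\ref{subsect:nullst}), not an $O(\langle\xi+\eta\rangle\langle\xi\rangle^{-1})$ bound in the opposite-sign case.
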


In addition, the result includes persistence of initial regularity,
i.e.\ if $\psi(0) \in H^\sigma(\R^3)$ for some $\sigma\geq 1$, the
solution $t\mapsto \psi(t)$ is a continuous curve in $H^\sigma(\R^3)$,
which in the case $\sigma>1$ is already known from the previous work
\cite{MNO}.

In a future work we intend to address the initial value problem for the cubic Dirac equation
in the critical space in space
dimension $n=2$.

For a subcritical result for the cubic Dirac equation in space
dimension $n=2$, see \cite{Pe13}, for results in space dimension
$n=1$, see \cite{MNT10,C11}. Concerning nonlinear Klein-Gordon
equations we refer the reader to \cite{DeFa,Ko,Kl,Sh}.

The plan for the paper is as follows. In the following subsection we
introduce the main notation which will be used throughout the rest of
the paper. In Section \ref{sect:le} we derive the major linear
estimates of the paper: the end-point $L^2 L^\infty$ in frames in
subsection \ref{endpnt} and the energy estimates in similar frames in
subsection \ref{Energy}. The proofs of some of the decay estimates are
postponed to Appendix \ref{app:decay}.  In Section \ref{sect:setupD}
we prepare the setup for the Dirac equation and unveil the null
condition present in the nonlinearity. In Section \ref{fspaces} we
introduce our function spaces, in Section \ref{sect:bil-est} we prove
useful bilinear estimates, which are applied in Section
\ref{sect:dirac} to prove the main result concerning the cubic Dirac
equation.

\subsection{Notation}\label{subsect:not}
We define $A\prec B$ by $A\leq B-c$ for some absolute constant
$c>0$. Also, we define $A\ll B$ to be $A\leq d B$ for some absolute
small constant $0<d<1$. Similarly, we define $A\ls B$ to be $A\leq e
B$ for some absolute constant $e>0$, and $A \approx B$ iff $A\ls B \ls
A$.

Similar to \cite{MNNO}, we set $\la \xi
\ra_k:=(2^{-2k}+|\xi|^2)^{\frac12}$ for $k \in \Z$ and $\xi \in \R^n$,
and we also write $\la\xi\ra:=\la\xi\ra_0$. We note that $\la \xi
\ra_k$ coincides with the euclidean norm of the vector
$(\xi,2^{-k})\in \R^{n+1}$. Since the euclidean norm is a smooth
function, homogeneous of degree $1$, on $\R^{n+1}\setminus \{0\}$, we
conclude that for all multi-indices $\beta \in \N_0^n$ there are
$c_{\beta,n}>0$, such that
\begin{equation}\label{eq:jap}
  \forall k \in \Z, \, \xi\in \R^n: \quad  |\partial^\beta_\xi \la \xi \ra_k|\leq c_{\beta,n} \la \xi\ra_k^{1-|\beta|}.
\end{equation}

Thoughout the paper, let $\rho\in C^\infty_c(-2,2)$ be a fixed smooth,
even, cutoff satisfying $\rho(s)=1$ for $|s|\leq 1$ and $0\leq
\rho\leq 1$. For $k \in \Z$ we define $\chi_k:\R^n \to \R$,
$\chi_k(y):=\rho(2^{-k}|y|)-\rho(2^{-k+1}|y|)$, such that
$A_k:=\supp(\chi_k)\subset \{y \in \R^n \colon 2^{k-1}\leq |y|\leq
2^{k+1}\}$. Let $\tilde{\chi}_k=\chi_{k-1}+\chi_k+\chi_{k+1}$ and
$\tilde{A}_k:=\supp(\tilde{\chi}_k)$.

We denote by $P_k = \chi_k(D)$ and $\tilde{P}_k =
\tilde{\chi}_k(D)$. Note that $P_k\tilde{P}_k=\tilde{P}_kP_k=P_k$.
Further, we define $\chi_{\leq k}=\sum_{l=-\infty}^k \chi_l$, $\chi_{>
  k}=1-\chi_{\leq k}$ as well as the corresponding operators $P_{\leq
  k}=\chi_{\leq k}(D)$ and $P_{> k}=\chi_{> k}(D)$.

We denote by $\mathcal{K}_l$ a collection of spherical caps of
diameter $2^{-l}$ which provide a symmetric and finitely overlapping
cover of the unit sphere $\mathbb{S}^2$. Let $\om(\ka)$ to be the
"center" of $\ka$ and let $\Gamma_{\ka}$ be the cone generated by
$\ka$ and the origin, in particular $\Gamma_{\ka}\cap \mathbb{S}^2
=\ka$.

For $M_1,M_2\subset \R^n$ we set \[\dist(M_1,M_2)=\inf\{|x-y|\colon
x\in M_1, y\in M_2\}.\]

Further, let $\eta_{\ka}$ be smooth partition of unity subordinate to
the covering of $\R^3\setminus\{0\}$ with the cones $\Gamma_{\ka}$,
such that each $\eta_\ka$ is supported in $2 \Gamma_{\ka}$ and is
homogeneous of degree zero and satisfies
\[
|\partial_\xi^\beta\eta_\kappa(\xi)|\leq C_\beta
2^{l|\beta|}|\xi|^{-\beta}, \quad
|(\omega(\ka)\cdot\nabla)^N\eta_\kappa(\xi)|\leq C_N |\xi|^{-N},
\]
as described in detail in \cite[Chapt. IX, \S 4.4 and formula
(66)]{St}. Let $\tilde{\eta}_{\ka}$ with similar properties but
slightly bigger support, such that $\tilde{\eta}_{\ka}\eta_{\ka}=1$.
We define $P_{\kappa}=\eta_{\ka}(D)$,
$\tilde{P}_{\kappa}=\tilde{\eta}_{\ka}(D)$.  With $P_{k,\ka}:=
\eta_{\ka}(D) \chi_k(D)$ and $\tilde{P}_{k,\ka}:=
\tilde{\eta}_{\ka}(D) \tilde{\chi}_k(D)$, we obtain the angular
decomposition
\[
P_k = \sum_{\ka \in \mathcal{K}_l} P_{k,\ka}
\]
and $P_{k,\ka}\tilde{P}_{k,\ka}=\tilde{P}_{k,\ka}P_{k,\ka}=P_{k,\ka}$.
We further define $A_{k,\ka}=\supp(\eta_{\ka} \chi_k)$ and
$\tilde{A}_{k,\ka}=\supp(\tilde{\eta}_{\ka} \tilde{\chi}_k)$.

We define $\widehat{Q^\pm_m u}(\tau,\xi)=\chi_m(\tau\mp \la
\xi\ra)\widehat{u}(\tau,\xi)$, and $\widehat{Q^\pm_{\leq m}
  u}(\tau,\xi)=\chi_{\leq m}(\tau\mp \la
\xi\ra)\widehat{u}(\tau,\xi)$. We also define
$\tilde{Q}^\pm_{m}=Q^\pm_{m-1}+Q^\pm_{m}+Q^\pm_{m+1}$.  We set
$B^\pm_{k,m}$ to be the Fourier support of $Q^\pm_m$, and
$\tilde{B}^\pm_{k,m}$ to be the Fourier support of $\tilde{Q}^\pm_m$.
Additionally, we define $Q^\pm_{\prec m} =\sum_{l=-\infty}^{m-c}
Q^\pm_l$ for a large integer $c>0$, and $Q^\pm_{\succeq m}=I-Q^\pm_{\prec m}$. Given $k \in \Z$, and $\kappa \in
\mathcal{K}_l$ for some $l \in \N$ we set $B^\pm_{k,\kappa}$ to be the
Fourier-support of $Q^\pm_{\prec k-2l} P_{k,\kappa}$. Similarly we
define $\tilde B^\pm_{k,\kappa}$.

Given an angle $\omega$ and a parameter $\lambda$ we define the
directions $\Theta_{\lambda,\omega}=\frac1{\sqrt{1+\lambda^2}}(\lambda
,\omega)$, $\Theta^\perp_{\lambda,\omega} =
\frac1{\sqrt{1+\lambda^{2}}}(-1,\lambda\omega)$ and the associated
orthogonal coordinates $(t_\Theta,x^1_{\Theta}, x'_{\Theta})$
\[
t_{\lambda,\omega}= (t,x) \cdot \Theta_{\lambda,\omega}, \quad
x^1_{\lambda,\omega}= (t,x) \cdot \Theta^\perp_{\lambda,\omega}.
\]

If $\lambda=1$ we obtain the characteristic directions (null
co-ordinates) as in \cite[p.\ 42]{tat} and \cite[p.\
476]{tao}. However, our analysis requires more flexibility in the
choice of the frames with respect to which the estimates are
available. With $\om(\ka)$ defined above and $\lambda(k)=(1+2^{-2k})^{-\frac12}$
let $(t^\pm_{k,\ka},x^\pm_{k,\ka})=(t_{ \pm
  \lambda(k),\om(\ka)},x_{\pm \lambda(k),\om(\ka)})$.

For $1\leq p,q\leq \infty$ we use the spaces $L^p_t L^q_x$ of all
equivalence classes of measurable (weak-$*$-measurable if $q=\infty$)
functions $f:\R\to L^q(\R^3)$ such that the norm
\[
\|f\|_{L^pL^q}=\|t\mapsto \|f(t)\|_{L^q(\R^3)}\|_{L^p(\R)}
\]
is finite.
\section{Linear estimates}\label{sect:le}
The decay rates of solutions to the linear wave equation and
Klein-Gordon equation have been analyzed e.g. in
\cite{vW,S76,Str77,MS,Si,Kl2,GV85,B85,MSW80}, see also the references
therein. From the harmonic analysis point of view, the decay is
determined by the curvature properties of the characteristic sets. In
particular, we refer the reader to \cite[Section 2.5]{NaSch} for a
detailed discussion of decay and Strichartz estimates in the context
of the Klein-Gordon equation.

For convenience, we set $m=1$ in the Klein-Gordon equation \eqref{KG}.
By rescaling our analysis extends to \eqref{KG} with any $m \neq
0$. With $m=1$, the solution is given by
\begin{equation}\label{eq:kg-prop}
  u(t) = \frac12 (e^{it \la D \ra} + e^{-it \la D \ra}) u_0 + \frac1{2i}
  (e^{it \la D \ra} - e^{-it \la D \ra}) \frac{u_1}{\la D \ra}.
\end{equation}
where $\la D \ra$ is the Fourier multiplier with symbol $\la \xi \ra$.
It then becomes clear that the key operator to study is $e^{\pm it \la
  D \ra}$. To keep things simple, we work all estimates for the $+$
sign choice, that is for $e^{it \la D \ra}$. The estimates for $e^{-
  it \la D \ra}$ are obtained in a similar way by simply reversing
time in the estimates for $e^{it \la D \ra}$.

\subsection{End-point $L^2L^\infty$ type Strichartz estimate.}\label{endpnt}
Our main result in this section provides the end-point Strichartz
estimates available for functions localized in frequency.
\begin{thm}\label{thm:estr}
{\rm i)} For all $k \ls 1$ and $f \in L^2(\R^3)$
    satisfying $\supp(\widehat{f})\subset \tilde{A}_k$,
    \begin{equation} \label{eq:lf} \| e^{it \la D \ra} f \|_{L^2_t
        L^\infty_x} \ls 2^{\frac{k}2} \| f \|_{L^2}
    \end{equation}
{\rm ii)}  For all $k \gs 1$, $\ka \in \mathcal{K}_{k}$
    and $f \in L^2(\R^3)$ satisfying $\supp(\widehat{f}) \subset
    \tilde{A}_{k,\ka}$,
    \begin{equation} \label{eq:estr} 2^{-k} \| e^{it \la D \ra} f
      \|_{L^2_t L^\infty_x} + \| e^{it \la D \ra} f
      \|_{L^2_{t_{k,\ka}} L^\infty_{x_{k,\ka}}} \ls \| f \|_{L^2}.
    \end{equation}
{\rm iii)}  For all $k \gs 1$, $1 \leq l \leq k$, $ \ka_1
    \in \mathcal{K}_l$ and $f \in L^2(\R^3)$ satisfying
    $\supp(\widehat{f}) \subset \tilde A_{k, \ka_1}$,
    \begin{equation} \label{MStr} \sum_{\ka \in \mathcal{K}_k} \|e^{it
        \la D \ra} \tilde P_{\ka}
      f\|_{L^2_{t_{k,\ka}}L^\infty_{x_{k,\ka}}} \ls 2^{k-l}
      \|f\|_{L^2}.
    \end{equation}
\end{thm}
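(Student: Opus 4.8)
The plan is to prove the three estimates in increasing order of difficulty, using the basic $TT^*$ machinery together with stationary phase analysis of the propagator $e^{it\la D\ra}$, but carried out in the adapted frames $(t_{k,\ka},x_{k,\ka})$.

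For part (i), the low-frequency regime $k\ls 1$, I would argue that $e^{it\la D\ra}$ behaves essentially like the Schrödinger propagator. Writing out the kernel of $e^{it\la D\ra}P_k$ and applying the dispersive bound, one gets an $L^1\to L^\infty$ decay of the form $|t|^{-3/2}$ modulated by the frequency scale $2^{3k}$, which after interpolation with the trivial $L^2$ bound and a $TT^*$ argument with the endpoint Keel–Tao theorem yields the admissible pair $(2,\infty)$ in three dimensions (which is admissible for Schrödinger scaling but sits at the endpoint). The factor $2^{k/2}$ is exactly what the Keel–Tao bookkeeping produces after accounting for the frequency localization to $\tilde A_k$; this step should be routine and I would not expect any obstruction here.

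For part (ii), the high-frequency regime $k\gs 1$ with angular localization to a cap $\ka\in\mathcal K_k$ of the \emph{critical} size $2^{-k}$, the point is that the piece of the characteristic surface $\{(\la\xi\ra,\xi):\xi\in\tilde A_{k,\ka}\}$ is, after rescaling, a graph of small curvature over a tube of dimensions $2^k\times 1\times\cdots$, and in the null frame $(t_{k,\ka},x_{k,\ka})$ adapted to the direction $\om(\ka)$ and the group velocity $\lambda(k)=(1+2^{-2k})^{-1/2}$ this surface becomes essentially flat in the $t_{k,\ka}$ direction. One then computes the kernel of $e^{it\la D\ra}\tilde P_{k,\ka}$, changes to the adapted coordinates, and extracts a one-dimensional dispersive decay in $t_{k,\ka}$ together with an $L^\infty\to L^\infty$ bound in the transverse variables. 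The key technical input is a stationary/non-stationary phase estimate showing that the transverse directions contribute no decay but also no growth, which is where the precise choice of $\lambda(k)$ (matching the group velocity on the cap) is essential. A $TT^*$ argument then gives the $L^2_{t_{k,\ka}}L^\infty_{x_{k,\ka}}$ bound; the standard $L^2_tL^\infty_x$ bound with the $2^k$ loss follows from the Bernstein-type inequality $\|P_{k,\ka}g\|_{L^\infty_x}\ls 2^k\|g\|_{L^\infty_x}$ comparing the cap-frame to the standard frame, or directly from the (false-at-endpoint but true-with-loss) estimate \eqref{EPSK}. I expect that the decay estimates needed here are exactly the ones the authors defer to Appendix \ref{app:decay}, so I would cite those and concentrate on the geometry of the coordinate change.

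For part (iii), the main obstacle, one has $f$ localized to a single \emph{large} cap $\ka_1\in\mathcal K_l$ with $l\le k$, and must sum the $L^2_{t_{k,\ka}}L^\infty_{x_{k,\ka}}$ norms of the pieces $e^{it\la D\ra}\tilde P_\ka f$ over all the $\sim 2^{k-l}$ small caps $\ka\in\mathcal K_k$ contained in $\ka_1$. The naive triangle inequality together with part (ii) would give a bound of $2^{k-l}\|f\|_{L^2}$ only if each piece satisfied $\|e^{it\la D\ra}\tilde P_\ka f\|_{L^2_{t_{k,\ka}}L^\infty_{x_{k,\ka}}}\ls\|f\|_{L^2}$ — which is true — but summing $2^{k-l}$ copies of $\|f\|_{L^2}$ is too lossy unless we exploit almost-orthogonality. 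The right strategy is to use the fact that the \emph{frames vary with} $\ka$: the coordinate systems $(t_{k,\ka},x_{k,\ka})$ are genuinely different for different small caps inside $\ka_1$, and when we fix one ambient frame (say the one adapted to $\ka_1$) the pieces $e^{it\la D\ra}\tilde P_\ka f$ have disjoint (finitely overlapping) Fourier supports, so $\sum_\ka\|\tilde P_\ka f\|_{L^2}^2\ls\|f\|_{L^2}^2$. The plan is therefore: (a) compare, for each small cap $\ka$, the norm $\|\cdot\|_{L^2_{t_{k,\ka}}L^\infty_{x_{k,\ka}}}$ to a fixed reference norm adapted to $\ka_1$, controlling the angle between the frames (which is $\ls 2^{-l}$, the diameter of $\ka_1$) and showing the frame change costs at most a constant; (b) apply part (ii) to each piece in its own frame; (c) use Cauchy–Schwarz over the $\sim 2^{k-l}$ caps together with the $\ell^2$ orthogonality of $\{\tilde P_\ka f\}$ to convert $\sum_\ka\|f_\ka\|_{L^2}\ls 2^{(k-l)/2}(\sum_\ka\|f_\ka\|_{L^2}^2)^{1/2}\ls 2^{(k-l)/2}\|f\|_{L^2}$. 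This would give $2^{(k-l)/2}$, which is \emph{better} than claimed, so presumably the actual argument loses an extra $2^{(k-l)/2}$ somewhere — most likely in step (a), where transferring from the $\ka$-frame to the $\ka_1$-frame in the $L^\infty_x$ direction costs a Bernstein factor because the spatial tube for the small cap is thinner than for the large cap in $\sim k-l$ of the dyadic scales between $2^{-l}$ and $2^{-k}$. So the real content of (iii) is a careful accounting of how the $L^\infty_x$ norm in one frame dominates the $L^\infty_x$ norm in a nearby frame after frequency localization, and I expect that to be the crux.
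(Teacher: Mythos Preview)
Your treatment of parts (i) and (ii) is in line with the paper's approach: both are proved by $TT^*$, reducing to $L^1_t L^\infty_x$ bounds on the kernels $K_k$ and $K_{k,\ka}$, which in turn follow from pointwise decay estimates (Lemma~\ref{lem:decay} and Proposition~\ref{pro:ang}); the paper does not invoke Keel--Tao but rather Young's inequality on the kernel, which is simpler here since no sharp endpoint is at stake.

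For part (iii), however, there is a concrete miscount that sends you in the wrong direction. You write that there are $\sim 2^{k-l}$ small caps $\ka\in\mathcal K_k$ inside the large cap $\ka_1\in\mathcal K_l$. But the caps cover the \emph{two}-dimensional sphere $\mathbb S^2$: a cap of angular radius $2^{-k}$ has area $\sim 2^{-2k}$, so a cap of radius $2^{-l}$ contains $\sim 2^{2(k-l)}$ of them. With the correct count, Cauchy--Schwarz over the caps gives
\[
\sum_{\ka}\|e^{it\la D\ra}\tilde P_\ka f\|_{L^2_{t_{k,\ka}}L^\infty_{x_{k,\ka}}}
\ls 2^{k-l}\Big(\sum_{\ka}\|e^{it\la D\ra}\tilde P_\ka f\|_{L^2_{t_{k,\ka}}L^\infty_{x_{k,\ka}}}^2\Big)^{1/2},
\]
and now part (ii) applied in each cap's \emph{own} frame, followed by the $\ell^2$ almost-orthogonality $\sum_\ka\|\tilde P_\ka f\|_{L^2}^2\ls\|f\|_{L^2}^2$, yields exactly $2^{k-l}\|f\|_{L^2}$. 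This is the paper's argument in its entirety; it is three lines long. There is no step~(a): you never transfer between frames, and indeed such a transfer would be both unnecessary and delicate. The ``extra loss'' you were hunting for is an artifact of the miscount, not a genuine feature of the argument.
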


Part i) claims that for the low frequencies the end-point
Strichartz estimates holds in a standard fashion.  Given that in that
regime the evolution is Schr\"odinger-like, the correct end-point
would be $L^2_t L^6_x$ from which the estimate \eqref{eq:lf} can be
obtained using the Sobolev embedding theorem.

In \eqref{eq:estr} we reveal the main Strichartz estimates in high
frequencies. If we localize $\hat f$ in the angular variable at scale
$2^{-k}$ we obtain two Strichartz estimates. The standard one $L^2_t
L^\infty_x$ is obtained without any logarithmic loss, which would be
the case in the absence of angular localization. The Strichartz
estimate in characteristic coordinates is better adapted to the
direction in which the waves propagate and hence it comes with a much
better prefactor. The other key advantage that the Strichartz estimate
in characteristic coordinates has is revealed in \eqref{MStr} where at
each scale (larger than $2^{-k}$) of angular localization we obtain
the $l^1$ structure on pieces measured in $L^2L^\infty$ in
characteristic coordinates.  In particular when no angular
localization is present ($l=0$) one obtains a replacement of the
missing end-point $L^2_tL^\infty_x$ with the correct factor of
$2^k$. The use of so many frames to capture the $L^2L^\infty$ estimate
will require more flexibility in the corresponding energy estimates.

The rest of this subsection is devoted to the proof of Theorem
\ref{thm:estr}.  Define the kernel
\begin{equation}\label{K_kdef}
  K_k(t,x)=\int_{\R^3}e^{ix\cdot\xi}e^{it\la \xi \ra}\tilde{\chi}_k^2(|\xi|)\,d\xi.
\end{equation}
We identify $L^2_t L^\infty_x$ as the dual of $L^2_t L^1_x$, see
\cite[Theorem 8.20.3]{E65} and the definitions in Subsection
\ref{subsect:not}. Through the usual $TT^*$ argument, see
e.g. \cite[Lemma 2.1]{gv}, the low frequency case \eqref{eq:lf} 
 follows from
\begin{equation} \label{eq:destr} \| K_k * g \|_{L^2_t L^\infty_x} \ls
  2^{k} \| g \|_{L^2_t L^1_x}.
\end{equation}

The following result can be found in \cite[Corollaries 2.36 and
2.38]{NaSch}, it can be traced back to \cite{GV85,B85,MSW80}.
   
 \begin{lem}\label{lem:decay}
  {\rm i)} For all $k \in \Z$, $k\ls 1$, we have
     \begin{equation}\label{eq:smallk}
       |K_k(t,x)| \ls 2^{3k} (1+2^{2k}|(t,x)|)^{-\frac{3}{2}}.
     \end{equation}

{\rm ii)} For all $k\in \Z, k \gs 1$ we have
     \begin{equation}\label{eq:bigk}
       |K_k(t,x)| \ls
       2^{3k} (1+2^{k} |(t,x)|)^{-1} \min(1,(1+2^k|(t,x)|)^{-\frac12}2^k))
     \end{equation}
 \end{lem}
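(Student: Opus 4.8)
The plan is to reduce $K_k$ to a one-dimensional oscillatory integral and then run a case analysis based on integration by parts (non-stationary phase) and van der Corput estimates. Since $\tilde\chi_k^2$ is radial, passing to polar coordinates $\xi=r\om$ and using the classical identity $\int_{\mathbb{S}^2}e^{ir\,\om\cdot x}\,d\om=4\pi\,\frac{\sin(r|x|)}{r|x|}$ gives
\[
K_k(t,x)=\frac{4\pi}{|x|}\int_0^\infty \sin(r|x|)\,e^{it\la r\ra}\,\tilde\chi_k^2(r)\,r\,dr
=\frac{2\pi}{i|x|}\sum_{\pm}(\pm)\int_0^\infty e^{i\phi_\pm(r)}\,a_k(r)\,dr,
\]
where $\la r\ra=\sqrt{1+r^2}$, $\phi_\pm(r)=t\la r\ra\pm r|x|$ and $a_k(r)=\tilde\chi_k^2(r)\,r$. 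The amplitude $a_k$ is supported in $\{r\approx 2^k\}$ and obeys $\|a_k\|_{L^\infty}\ls 2^k$ and $\|\partial_r^j a_k\|_{L^1}\ls 2^{(2-j)k}$; in particular $\|a_k\|_{L^1}\ls 2^{2k}$ and $\|a_k'\|_{L^1}\ls 2^k$. When $|x|\ls 2^{-k}$ I would not extract the factor $1/|x|$ but instead use $|\sin(r|x|)|\le\min(1,r|x|)$, reducing matters to estimating $\int e^{it\la r\ra}\tilde\chi_k^2(r)\,r^2\,dr$ directly, which is handled in parallel to what follows.

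The phase satisfies $\phi_\pm'(r)=\frac{tr}{\la r\ra}\pm|x|$ and $\phi_\pm''(r)=\frac{t}{\la r\ra^3}$, so on $\{r\approx 2^k\}$ one has $|\phi_\pm''(r)|\approx|t|\la 2^k\ra^{-3}$, which is $\approx|t|$ when $k\ls 1$ and $\approx 2^{-3k}|t|$ when $k\gs 1$. Three inputs then suffice: the trivial bound $|\int e^{i\phi_\pm}a_k|\ls\|a_k\|_{L^1}\ls 2^{2k}$, which accounts for the ``$1$'' in the claimed minima; repeated integration by parts with the operator $\frac{1}{i\phi_\pm'}\partial_r$ wherever $|\phi_\pm'|$ dominates its own variation over the support, which yields rapid decay away from a neighbourhood of the light cone $\{|x|\approx|t|\}$ (the only place where $\phi_\pm'$ can vanish, and only on one of the two branches); and the van der Corput estimate
\[
\Big|\int_0^\infty e^{i\phi_\pm(r)}a_k(r)\,dr\Big|\ls\big(|t|\la 2^k\ra^{-3}\big)^{-1/2}\big(\|a_k\|_{L^\infty}+\|a_k'\|_{L^1}\big)\ls \la 2^k\ra^{3/2}\,2^k\,|t|^{-1/2},
\]
which holds regardless of the location of a possible stationary point. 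For part~i) ($k\ls 1$) it is cleaner to avoid the $1/|x|$ splitting altogether: rescaling $\xi=2^k\zeta$ turns $K_k$ into $2^{3k}e^{it}\int e^{i(2^kx\cdot\zeta+\,2^{2k}t\,\Phi_k(\zeta))}\,\psi(|\zeta|)\,d\zeta$ with $\psi$ supported in $\{|\zeta|\approx 1\}$, and since $2^{2k}\ls 1$ the function $\Phi_k(\zeta)=|\zeta|^2 g(2^{2k}|\zeta|^2)$ (with $g(u)=(\sqrt{1+u}-1)/u$) is a family of phases that is smooth and has Hessian $\approx I$ uniformly in $k\ls1$; the three-dimensional stationary phase principle, with space-time variable $(\bar t,\bar x)=(2^{2k}t,2^kx)$, then gives $|K_k|\ls 2^{3k}(1+|(\bar t,\bar x)|)^{-3/2}$, which after undoing the scaling is \eqref{eq:smallk}.

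For part~ii) ($k\gs 1$, hence $\la 2^k\ra\approx 2^k$) I would assemble these inputs over the regions $\{|x|\ls 2^{-k}\}$, $\{|x|\approx|t|\}$ and $\{|x|$ far from $|t|\}$: the trivial bound together with $\|a_k\|_{L^1}/|x|\ls 2^{2k}/|x|$ and the integration-by-parts decay off the cone produce the factor $2^{3k}(1+2^k|(t,x)|)^{-1}$, while the extra gain $\min(1,2^k(1+2^k|(t,x)|)^{-1/2})$ comes from the van der Corput bound above --- which here reads $|\int e^{i\phi_\pm}a_k|\ls 2^{5k/2}|t|^{-1/2}$ --- after dividing by $|x|$ and using $|x|\approx|t|\approx|(t,x)|$ on the cone. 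The hard part will be the bookkeeping near the light cone: there one must simultaneously absorb the $1/|x|$ loss from the spherical integration and extract the precise van der Corput gain coming from $\phi_\pm''=t\la r\ra^{-3}$ --- this $\la r\ra^{-3}$ factor being exactly the dispersive improvement of Klein--Gordon over the (light-cone-singular) wave equation --- while checking that the $O(2^{-2k}|t|)$ offset of the true stationary point from $\{|x|=|t|\}$ does not destroy the gain, and tracking all powers of $2^k$ so as to land on the sharp exponents in \eqref{eq:bigk} rather than on a lossy surrogate.
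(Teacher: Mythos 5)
Your proposal is sound and runs essentially parallel to the paper's own argument, with two identifiable differences of route. For part ii) both you and the paper reduce via polar coordinates to a one-dimensional oscillatory integral with phase $t\la r\ra\pm r|x|$, handle $|t|\not\approx|x|$ by non-stationary phase, and extract the Klein--Gordon gain near the light cone from the second derivative $t\la r\ra^{-3}\approx 2^{-3k}t$ on $\{r\approx 2^k\}$; the only material difference is that the paper first rescales $(\tau,\xi)\mapsto 2^k(\tau,\xi)$ so that the radial variable lives near $1$ and the phase is $s\la r\ra_k$ with $\varphi''=2^{-2k}\la r\ra_k^{-3}$, and then performs a hands-on split $\int_0^{r_0-\delta}+\int_{r_0-\delta}^{r_0+\delta}+\int_{r_0+\delta}^\infty$ with $\delta=2^k|s|^{-1/2}$ around the stationary point, whereas you work at the native frequency $2^k$ and simply invoke the order-two van der Corput lemma with amplitude bounds $\|a_k\|_\infty+\|a_k'\|_{L^1}\ls 2^k$, obtaining the identical $2^{5k/2}|t|^{-1/2}$ before division by $|x|\approx|t|$. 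These are equivalent; the paper's manual split has the minor advantage of being self-contained, while your version makes the $2^{-3k}$ dispersive improvement visible with no rescaling bookkeeping. For part i) the routes genuinely diverge: the paper invokes the classical pointwise decay of Fourier transforms of surface measures for the truncated mass shell and then transfers the estimate to $K_k$ by a convolution/Young argument, whereas you rescale $\xi=2^k\zeta$, write the phase as $2^kx\cdot\zeta+2^{2k}t\,\Phi_k(\zeta)$ with $\Phi_k(\zeta)=|\zeta|^2g(2^{2k}|\zeta|^2)$, check that the Hessian of $\Phi_k$ is uniformly non-degenerate for $k\ls1$, and apply three-dimensional stationary phase directly; your version is more self-contained (avoiding the citation and the slightly delicate ``convolution preserves the decay envelope'' step) and puts the $2^{2k}$-scaling in the estimate on the surface. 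I note you explicitly flag the remaining light-cone bookkeeping in part ii) as not fully carried out; your sketch of it is consistent with the target exponents, and the paper's Appendix~\ref{app:l2} supplies exactly the computation you anticipate.
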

 Estimate \eqref{eq:smallk} easily follows from the classical result
 on Fourier transforms of surface carried measures \cite[p. 348,
 Theorem 1]{St}.  The idea behind estimate \eqref{eq:bigk} is the
 following: After rescaling to unit frequency size, $K_k$ essentially
 is the (inverse) Fourier transform of an approximately cone-like
 surface with $2$ principal curvatures which are uniformly bounded
 from below, cp. \cite{Li} or \cite[p.\ 361]{St}, which implies
 \eqref{eq:bigk} for $|(t,x)|\leq 2^k$. By taking into account that
 the surface actually has $n$ non-vanishing principal curvatures, one
 of which is of size $2^{-2k}$, cp. \cite[p.\ 360]{St} or
 \cite[Section 7]{H1} one obtains \eqref{eq:bigk} for $|(t,x)|\geq
 2^k$. For convenience of the reader, we provide a proof in Appendix \ref{app:decay}.

 Using the above Lemma, we obtain $\|K_k\|_{L^1_t
   L^\infty_x} \ls 2^k$ from which \eqref{eq:destr} and therefore
 \eqref{eq:lf} follows.  We are now left with completing the most
 interesting part of the argument, namely the proof of
 \eqref{eq:estr}. Through the $TT^*$ argument, the estimate
 \eqref{eq:estr} is reduced to the following
 \begin{equation*} \| K_{k,\ka} * g \|_{L^2_{t} L^\infty_{x}} \ls
   2^{2k} \| g \|_{L^2_{t} L^1_{x}} , \quad\| K_{k,\ka} * g
   \|_{L^2_{t_{k,\kappa}} L^\infty_{x_{k,\kappa}}} \ls \| g
   \|_{L^2_{t_{k,\kappa}} L^1_{x_{k,\kappa}}}
 \end{equation*}
 for $\ka \in \mathcal{K}_{k}$, where
 \begin{equation}\label{A_kdef}
   K_{k,\ka} (t,x)=\int_{\R^3}e^{ix\cdot\xi}e^{ it\la \xi \ra}\tilde{\chi}_k^2(|\xi|)\tilde{\eta}_\kappa(\xi)\,d\xi.
 \end{equation}
 Again by Young's inequality, this reduces to showing that
 \begin{equation}\label{eq:l1linftykappa}
   2^{-2k} \| K_{k,\ka} \|_{L^1_t L^\infty_x} 
   +  \| K_{k,\ka} \|_{L^1_{t_{k,\kappa}} L^\infty_{x_{k,\kappa}}} \ls 1.
 \end{equation}

 This estimate follows from the Proposition below.

 \begin{pro}\label{pro:ang} For all $k \in \Z, k \gs 1$, $\ka \in
   \mathcal{K}_{k}$ and all $(t,x)$
   \begin{equation}\label{eq:ang1}
     |K_{k,\ka}(t,x)|\ls2^{k}(1+ 2^{-k} |(t,x)|)^{-\frac32}.
   \end{equation}
   In addition, for $N=1,2$, we have the following:
   \begin{equation}\label{eq:ang2}
     |K_{k,\ka}(t,x)|\ls_N 2^{k}(1+ 2^k |t_{k,\ka}|)^{-N}, \quad \text{if } |t_{k,\ka}| \gg 2^{-2k} |(t,x)|.
   \end{equation}
  
 \end{pro}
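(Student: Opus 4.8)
The plan is to estimate the oscillatory integral $K_{k,\ka}$ defined in \eqref{A_kdef} by stationary and non-stationary phase after rescaling to unit frequency. First I would substitute $\xi = 2^k\zeta$, so that $|\zeta|\approx 1$ and $\tilde\chi_k^2(|\xi|)\tilde\eta_\ka(\xi)$ becomes a bump function adapted to a spherical cap of radius $\approx 1$ around $\om(\ka)$; the phase becomes $2^k(x\cdot\zeta + t\la\zeta\ra_k)$, where $\la\zeta\ra_k = (2^{-2k}+|\zeta|^2)^{1/2}$. Since $k\gs 1$ we have $2^{-2k}\ll 1$, so on the support $\la\zeta\ra_k$ is a smooth, uniformly non-degenerate function of $\zeta$ obeying the derivative bounds \eqref{eq:jap}; thus the characteristic surface $\{(\la\zeta\ra_k,\zeta)\}$ is, on this cap, a small perturbation of the unit-frequency cone and has two principal curvatures bounded below and one (in the radial direction) of size $\approx 2^{-2k}$, exactly as recorded in Lemma~\ref{lem:decay}. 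Because the cap has the full angular size $2^{-k}$ in the original variable — i.e.\ size $\approx 1$ after rescaling — there is no loss from the angular cutoff, and the standard stationary phase estimate for a hypersurface with two non-vanishing curvatures (see \cite[p.~348, Theorem~1]{St} or \cite[p.~361]{St}) gives decay $(1+|y|)^{-1}$ in the two "good" directions. This already yields a bound $|K_{k,\ka}(t,x)|\ls 2^{3k}(1+2^k|(t,x)|)^{-1}$; combining with the trivial bound $|K_{k,\ka}|\ls 2^{3k}$ and interpolating with a gain from the third, weakly curved direction produces the claimed $2^k(1+2^{-k}|(t,x)|)^{-3/2}$ after unwinding the scaling — the exponent $-3/2$ reflecting that we use all three curvatures but the radial one only on the scale $2^{-k}|(t,x)|\gs 1$. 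Here one must be a little careful: the prefactor $2^k$ rather than $2^{3k}$ comes because $(1+2^{-k}|(t,x)|)^{-3/2}$ is paired against the volume $2^{3k}$ of the frequency cap and a $2^{-2k\cdot?}$ from radial decay; I would organize the interpolation so the bookkeeping of powers of $2^k$ is transparent, treating the regimes $|(t,x)|\leq 2^k$, $2^k\leq |(t,x)|\leq 2^{3k}$ and $|(t,x)|\geq 2^{3k}$ separately and matching \eqref{eq:bigk} of Lemma~\ref{lem:decay} with the angular gain.

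For \eqref{eq:ang2} the key point is a directional non-stationary phase argument in the adapted frame. In coordinates $(t_{k,\ka},x^1_{k,\ka},x'_{k,\ka})$ determined by $\Theta_{\lambda(k),\om(\ka)}$ with $\lambda(k)=(1+2^{-2k})^{-1/2}$, the vector $\Theta_{\lambda(k),\om(\ka)}$ is, up to normalization, tangent to the characteristic surface at the cap's center: indeed $\lambda(k) = \la\om(\ka)\ra^{-1}\cdot$ (a harmless factor) is precisely the group velocity $|\nabla_\xi\la\xi\ra|$ at $\xi$ in the direction $\om(\ka)$ on the dyadic shell $|\xi|\approx 2^k$, so the frame $(t_{k,\ka},x_{k,\ka})$ is "moving with the wave." Consequently the gradient in $\zeta$ of the phase $y_{k,\ka}\cdot(\text{wave-vector})$, when $y$ has a large $t_{k,\ka}$-component and a comparatively small spatial component — the hypothesis $|t_{k,\ka}|\gg 2^{-2k}|(t,x)|$ is exactly the statement that the $t_{k,\ka}$-direction dominates after accounting for the $2^{-2k}$ flatness — stays bounded away from zero on the whole cap, with lower bound $\gs 2^k|t_{k,\ka}|$ after rescaling. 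Repeated integration by parts in $\zeta$ (using that each derivative falling on the amplitude costs only $O(1)$, again by \eqref{eq:jap} and the cap-adapted bounds on $\tilde\eta_\ka$) then gives the rapid decay $(1+2^k|t_{k,\ka}|)^{-N}$ for any $N$; we only claim $N=1,2$ because that is all that is needed downstream, though the argument yields arbitrary $N$.

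The main obstacle, and the place requiring genuine care, is verifying the non-degeneracy claims uniformly in $k$ on the rescaled cap: that $\la\zeta\ra_k$ has exactly the curvature profile claimed, and — crucially — that the specific choice $\lambda(k)=(1+2^{-2k})^{-1/2}$ makes $\Theta_{\lambda(k),\om(\ka)}$ genuinely tangent to the characteristic surface (not merely to the light cone), so that the $t_{k,\ka}$-derivative of the phase degenerates to first order along that direction and the threshold in \eqref{eq:ang2} comes out with the sharp weight $2^{-2k}$ rather than $1$. One must also confirm that the cap $\ka\in\mathcal K_k$ of angular radius $2^{-k}$, rescaled, is comparable to a unit ball in the tangent variables and does not shrink in $k$, which is what kills the logarithmic loss present in the unlocalized estimate. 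Once these geometric facts are pinned down, \eqref{eq:ang1} and \eqref{eq:ang2} are routine stationary/non-stationary phase, and \eqref{eq:l1linftykappa} follows by integrating the pointwise bounds: $\|K_{k,\ka}\|_{L^1_tL^\infty_x}$ picks up $2^{2k}$ from integrating $2^k(1+2^{-k}|t|)^{-3/2}$ in $t\in\R$ (gaining a factor $2^k$), matching the $2^{-2k}$ prefactor; and $\|K_{k,\ka}\|_{L^1_{t_{k,\ka}}L^\infty_{x_{k,\ka}}}$ is controlled by splitting into $|t_{k,\ka}|\gg 2^{-2k}|(t,x)|$, where \eqref{eq:ang2} with $N=2$ is integrable, and the complementary region, where \eqref{eq:ang1} together with $|(t,x)|\gs 2^{2k}|t_{k,\ka}|$ again gives an integrable bound in $t_{k,\ka}$.
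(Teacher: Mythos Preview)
Your plan for \eqref{eq:ang2}---non-stationary phase in the adapted direction---is right in spirit, but there is a geometric error that would make the argument fail as written. After the isotropic rescaling $\xi=2^k\zeta$ the support of $\tilde\chi_k^2\tilde\eta_\ka$ does \emph{not} become a unit ball: since $\tilde\eta_\ka$ is homogeneous of degree zero the angular width remains $\approx 2^{-k}$, so the rescaled support is a thin slab of dimensions roughly $1\times 2^{-k}\times 2^{-k}$ (volume $\approx 2^{-2k}$, which is the real source of the prefactor $2^k=2^{3k}\cdot 2^{-2k}$). In particular your claim that ``each derivative falling on the amplitude costs only $O(1)$'' is false for derivatives transverse to $\om=\om(\ka)$---those cost $2^k$ each---so integrating by parts via the full $\zeta$-gradient will not close. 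The fix, and what the paper does, is to integrate by parts \emph{only} in the single direction $\partial_\om=\om\cdot\nabla_\xi$. Two facts make this work: the cap cutoffs are constructed so that $|\partial_\om^N\tilde\eta_\ka|\ls_N 1$ (recorded in Subsection~\ref{subsect:not}), and---this is the computation one must actually carry out---the phase satisfies $|\partial_\om^j\phi|\ls 2^{-2k}$ for $j=2,3$ on the support, while $|\partial_\om\phi|\gs |t_{k,\ka}|/|(t,x)|\gg 2^{-2k}$ by hypothesis. The smallness of $\partial_\om^2\phi$, not merely the tangency of $\Theta_{\lambda(k),\om}$, is what allows the iterated $d_\phi^*$ operator to be bounded without losing powers of $2^k$.

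For \eqref{eq:ang1} your stationary-phase-and-interpolation outline could in principle be made to work but is harder than necessary and inherits the same misconception about cap size. The paper's route is much shorter: the trivial volume bound gives $|K_{k,\ka}|\ls 2^k$ for $|(t,x)|\ls 2^k$, and for $|(t,x)|\gs 2^k$ one writes $K_{k,\ka}$ as a spatial convolution of $K_k$ with the inverse transform of a smooth cap-adapted multiplier (uniformly in $L^1$) and reads off the pointwise decay directly from the already-established bound \eqref{eq:bigk} via Young's inequality.
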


We remark that \eqref{eq:ang2} holds with any $N \in \N$, but as stated it suffices for our purposes.

 Before turning to the proof of this Proposition, we show how
 \eqref{eq:l1linftykappa} follows from the statements above. The first
 part of \eqref{eq:l1linftykappa} is straightforward:
 \begin{align*}
   \| K_{k,\ka} \|_{L^1_t L^\infty_x} & \ls \int_{-2^{k}}^{2^k} \|
   K_{k,\ka} \|_{L^\infty_x} dt
   + \int^{-2^{k}}_{-\infty} \|K_{k,\ka} \|_{L^\infty_x} dt + \int_{2^k}^\infty \|K_{k,\ka} \|_{L^\infty_x} dt \\
   & \ls 2^{2k} + \int_{2^k}^\infty 2^{\frac{5k}2} t^{-\frac32} dt \ls
   2^{2k}.
 \end{align*}
 For the second part of \eqref{eq:l1linftykappa}, we want to
 understand $\| K_{k,\ka} (t_{k,\ka}, \cdot)
 \|_{L^\infty_{x_{k,\ka}}}$ for some fixed $t_{k,\ka}$ such that
 $|t_{k,\ka}| \approx 2^{j}$ with $j \geq -k$.  If the point
 $(t_{k,\ka},x_{k,\ka})$ belongs to the region $|t_{k,\ka}| \gg
 2^{-2k} |(t,x)|$, then we have the bound $|K_k(t,x)| \ls 2^k
 (2^{k+j})^{-2}$, while if it belongs to the region $|t_{k,\ka}| \ls
 2^{-2k} |(t,x)|$ then we have the bound $|K_k(t,x)| \ls 2^k
 (2^{-k}|(t,x)|)^{-\frac32} \ls 2^k (2^{k+j})^{-\frac32}$. The
 conclusion is that if $|t_{k,\ka}| \approx 2^j$ with $j \geq -k$ then
 \[
 \| K_{k,\ka} (t_{k,\ka}, \cdot) \|_{L^\infty_{x_{k,\ka}}} \ls 2^{k}
 2^{-\frac32 (k+j)}.
 \]
 From this we estimate
 \[
 \begin{split}
   \| K_{k,\ka} \|_{L^1_{t_{k,\kappa}} L^\infty_{x_{k,\kappa}}} &
   \ls \int_0^{2^{-k}} 2^k d t_{k,\ka}
   + \sum_{j =-k}^\infty \int_{2^j}^{2^{j+1}}  \| K_{k,\ka} (t_{k,\ka}, \cdot ) \|_{L^\infty_{x_{k,\ka}}} d t_{k,\ka} \\
   & \ls 1+ \sum_{j =-k}^\infty 2^{k+j} 2^{-\frac32 (k+j)} \ls 1
 \end{split}
 \]
 and this finishes the argument for the second part of
 \eqref{eq:l1linftykappa}. With this, the proof of \eqref{eq:estr} is
 complete.

 \begin{proof}[Proof of Proposition \ref{pro:ang}] We begin with the
   proof of \eqref{eq:ang1}. If $|(t,x)| \ls 2^k$ then the
   statement follows directly by using that size of the support of the
   integration has volume $\approx 2^k$. If $|(t,x)| \gs 2^k$, then
   the estimate follows from \eqref{eq:bigk} and Young's inequality.

   It remains to provide a proof of \eqref{eq:ang2}. For
   compactness of notation, we write $\lambda=\lambda(k)$,
   $\omega=\omega(\ka)$. By rescaling it suffices to consider
   \[
   B_{k,\ka}(s,y):=\int_{\R^3}e^{iy\cdot \xi + is\la \xi
     \ra_k}\tilde{\chi}_1^2(|\xi|)\tilde{\eta}_\ka(\xi)d\xi
   \]
   and establish, for $N=1,2$
   \begin{equation} \label{Best} |B_{k,\ka}(s,y)| \ls_N
     2^{-2k}(1+|s_{\lambda,\omega}|)^{-N}, \qquad |s_{\lambda,\omega}|
     \gg 2^{-2k} |(s,y)|.
   \end{equation} 
   If $|s_{\lambda,\omega}| \ls 1$, the estimate follows from the fact
   that the support of the integration has volume $\approx
   2^{-2k}$. For the rest of the argument we work under the hypothesis
   $|s_{\lambda,\omega}| > 1$.
  
   We write $(s,y)=\beta (r,z)$ with $\beta=|(s,y)|$ and the integral
   above becomes
   \[
   C_{k,\ka}(\beta,r,z)=\int_{\R^3}e^{i\beta
     \phi(r,z,\xi)}\tilde{\chi}_1^2(|\xi|)\tilde{\eta}_\ka(\xi)d\xi
   \]
   with $\phi(r,z,\xi)= z \cdot \xi + r \la \xi \ra_k$.  The phase
   function satisfies $ \partial_{\xi_j} \phi (r,z,\xi) = z_j + r
   \frac{\xi_j}{\la \xi \ra_k}.$
   Define $\partial_\omega =\omega\cdot \nabla_\xi$, $
   d_\phi :=\frac{1}{\partial_\omega 
     \phi}\partial_\omega $ and $
   d_\phi^\ast :=-\partial_\omega \Big(\frac{\cdot}{\partial_\omega
     \phi}\Big)$. Integrating by parts, we compute
   \begin{equation}\label{eq:ip}
     \begin{split}
       \int_{\R^3}e^{i\beta
         \phi(r,z,\xi)}\tilde{\chi}_1^2(|\xi|)\tilde{\eta}_\ka(\xi)d\xi
       & =
       \int_{\R^3} \frac1{(i \beta)^N} d_\phi^N (e^{i\beta \phi(r,z,\xi)}) 
        \tilde{\chi}_1^2(|\xi|)\tilde{\eta}_\ka(\xi)d\xi \\
       & = (i \beta)^{-N} \int_{\R^3} e^{i\beta
         \phi(r,z,\xi)} (d^*_\phi)^N (\tilde{\chi}_1^2(|\xi|)\tilde{\eta}_\ka(\xi)) d\xi
     \end{split}
   \end{equation}
   For $\zeta(\xi)=\tilde{\chi}_1^2(|\xi|)\tilde{\eta}_\ka(\xi)$ we
   claim the bounds
   \begin{equation}\label{eq:diffop}
     |(d^*_\phi)^N (\zeta)(\xi)|\ls_N \Big(\frac{\beta}{|s_{\lambda,\omega}|}\Big)^N, \qquad N=1,2.
   \end{equation}
   Since the support of the integration above has volume $\approx 2^{-2k}$, \eqref{Best} follows from
   \eqref{eq:ip} and \eqref{eq:diffop}. Hence all that is left is an argument for \eqref{eq:diffop}. 
   
   Let $N=1$. Let
   $(\omega,\omega_2,\omega_3)$ be an orthonormal basis of $\R^3$. For
   $\xi$ in the support of the integration we have
   \[
   \frac{\xi}{|\xi|} = \omega + \mathcal{O}(2^{-k}) \omega_2 +
   \mathcal{O}(2^{-k}) \omega_3 + \mathcal{O}(2^{-2k}), \qquad
   \frac{|\xi|}{\la \xi \ra_k}= \lambda + \mathcal{O}(2^{-2k}),
   \]
   where we recall that $\lambda=\lambda(k)=\frac1{\sqrt{1+2^{-2k}}}$.
   Using these facts we obtain
   \begin{equation*}
     \begin{split}
       \partial_\omega \phi & = \omega \cdot (z + r \frac{\xi}{\la \xi
         \ra_k})
       = \omega \cdot z + r \frac{|\xi|}{\la \xi  \ra_k} + \mathcal{O}(2^{-2k}) \\
       & = \omega \cdot z + r \lambda + \mathcal{O}(2^{-2k}) =
       \frac{s_{\lambda, \omega}}{\beta\sqrt{1+\lambda^2}} +
       \mathcal{O}(2^{-2k})
     \end{split}
   \end{equation*}
   Therefore we obtain $|\partial_\omega\phi | \gs \frac{|s_{\lambda,
       \omega}|}{\beta} \gg 2^{-2k}$. In particular it follows that
       \begin{equation} \label{part1}
       |\frac{\partial_\omega \zeta}{\partial_\omega \phi}| \ls \frac{\beta}{|s_{\lambda,\omega}|}.
       \end{equation}
      where we used that $|\partial_\omega \zeta| \ls 1$. In addition, we have
   \[
   \partial_\omega^2 \phi(\xi) = \partial_\omega \Big(r \frac{\omega
     \cdot \xi}{\la \xi \ra_k}\Big) = r \left( \frac{\omega \cdot
       \omega}{\la \xi \ra_k} - \frac{(\omega \cdot \xi)^2}{\la \xi
       \ra_k^3}\right) = \frac{r}{\la \xi \ra_k} \left(1- (
     \frac{\omega \cdot \xi}{\la \xi \ra_k})^2 \right)
   \]
   from which, using the above arguments, we conclude that in the
   domain of integration we have $| \partial_\omega^2 \phi | \ls
   2^{-2k}$.  This allows us to estimate
   \[
   |\partial_\omega \Big(\frac1{\partial_\omega\phi}\Big)| \ls
   \frac{2^{-2k}}{|\partial_\omega \phi|^2} \ls
   \frac{1}{|\partial_\omega\phi|} \ls \frac{\beta}{|s_{\lambda,
       \omega}|}.
   \]
   From this and \eqref{part1} we obtain \eqref{Best} for $N=1$.
   Now let $N=2$ and compute
   \begin{align*}
    (d^*_\phi)^2 \zeta =\partial_\omega\Big(\frac{1}{\partial_\omega \phi
     }\partial_\omega\frac{\zeta}{\partial_\omega \phi}\Big)=
     \frac{\partial_\omega^2 \zeta}{(\partial_\omega \phi)^2}-3
     \frac{\partial_\omega \zeta \partial_\omega^2
       \phi}{(\partial_\omega \phi)^3}
     -\frac{\zeta \partial_\omega^3\phi}{(\partial_\omega \phi)^3}
     +3\frac{\zeta (\partial_\omega^2\phi)^2}{(\partial_\omega
       \phi)^4}
   \end{align*}
   We compute
   \[
   \partial_\omega^3 \phi =\frac{3r}{\la \xi\ra_k^5}\Big((\omega \cdot
   \xi)^3-(\omega \cdot \xi) \la \xi\ra_k^2\Big)=\mathcal{O}(2^{-2k}).
   \]
   Recalling that $|\partial_\omega\phi | \gs \frac{|s_{\lambda,
       \omega}|}{\beta} \gg 2^{-2k}$, $|\partial_\omega^2 \phi|\ls 2^{-2k}$ and
   $|\partial_\omega^N \zeta|\ls_N 1$ we conclude that
   \[
     |(d^*_\phi)^N |
     \ls \frac{\beta^2}{|s_{\lambda,\omega}|^2}+\frac{2^{-2k}\beta^3}{|s_{\lambda,\omega}|^3}
     +\frac{2^{-4k}\beta^4}{|s_{\lambda,\omega}|^4}
     \ls  \frac{\beta^2}{|s_{\lambda,\omega}|^2}.
   \]
This finishes the proof of \eqref{eq:diffop} and, in turn, the proof of \eqref{eq:ang2}.
 \end{proof}

 We end this section with the proof of \eqref{MStr}. Since there are
 $\approx 2^{2(k-l)}$ caps $\ka \in \mathcal{K}_{k}$ such that $P_\ka
 f \ne 0$, we obtain from \eqref{eq:estr}
 \begin{align*}
   &\sum_{\ka \in \mathcal{K}_{k}} \| e^{it \la D \ra} \tilde
   P_{\kappa}f \|_{L^2_{t_{k,\ka}} L^\infty_{x_{k,\ka}}} \ls 2^{k-l}
   \left( \sum_{\ka \in \mathcal{K}_{k}} \|e^{it \la D \ra}  \tilde P_{\kappa}f \|_{L^2_{t_{k,\ka}} L^\infty_{x_{k,\ka}}}^2 \right)^\frac12\\
   \ls{} & 2^{k-l} \left( \sum_{\ka \in \mathcal{K}_{k}} \|\tilde
     P_{\kappa}f\|_{L^2_x}^2 \right)^\frac12 \ls{} 2^{k-l} \| f
   \|_{L^2_x}.
 \end{align*}

\subsection{Energy estimates in the $(\lambda,\omega)$
   frames}\label{Energy}
 Given a pair $(\lambda,\omega)$ with $\lambda \in \R$ and $\om \in
 \mathbb{S}^2$ we recall that we defined
 \[\Theta_{\lambda,\om}=\frac{1}{\sqrt{1+\lambda^2}}(\lambda,\om),\quad
 \Theta_{\lambda,\om}^\perp=\frac{1}{\sqrt{1+\lambda^2}}(-1,\lambda
 \omega)\] to be two orthogonal vectors in $\R^4$. This can be
 completed to an orthonormal basis in $\R^4$ by considering any two
 vectors $\Theta_{2,\om}=(0,\omega_2)$ and
 $\Theta_{3,\om}=(0,\omega_3)$ such that $(\omega, \omega_2,
 \omega_3)$ form a positively oriented orthonormal basis in $\R^3$.

 With respect to this basis, understanding the
 vectors $\Theta_{\lambda,\om}$,
 $\Theta_{\lambda,\om}^\perp$, $\Theta_{2,\om}$, $\Theta_{3,\om}$ as column
 vectors, we introduce the new coordinates
 $t_{\lambda,\om},x_{\lambda,\om}$, with
 $x_{\lambda,\om}=(x^1_{\lambda,\om},x^2_{\om},x^3_{\om})$, defined by
 \[
 \begin{pmatrix}
   t_{\lambda,\om} \\ x^1_{\lambda,\om} \\ x^2_{\om} \\ x^3_{\om} \\
 \end{pmatrix}
 =\begin{pmatrix} \Theta_{\lambda,\om} & \Theta_{\lambda,\om}^\perp &
   \Theta_{2,\om} & \Theta_{3,\om}
 \end{pmatrix}^t
 \begin{pmatrix}
   t \\ x_1 \\ x_{2} \\ x_3 \\
 \end{pmatrix}
 \]
 In many of the computations we will write
 $x'_{\om}=(x^2_{\om},x^3_{\om})$.

 We denote by $(\tau_{\lambda,\om}, \xi_{\lambda,\om})$ the
 corresponding Fourier variables which are given by
 \[
 \begin{pmatrix}
   \tau_{\lambda,\om} \\ \xi^1_{\lambda,\om} \\ \xi^2_{\om} \\ \xi^3_{\om} \\
 \end{pmatrix}
 =\begin{pmatrix} \Theta_{\lambda,\om} & \Theta_{\lambda,\om}^\perp &
   \Theta_{2,\om} & \Theta_{3,\om}
 \end{pmatrix}
 \begin{pmatrix}
   \tau \\ \xi_1 \\ \xi_{2} \\ \xi_3 \\
 \end{pmatrix}
 \]
 where we also write $\xi'_{\om}=(\xi^2_{\om},\xi^3_{\om})$. In the
 following theorem and its proof we set $B_{k,\ka}=B^+_{k,\ka}$ and $\tilde{B}_{k,\ka}=\tilde{B}^+_{k,\ka}$.
 \begin{thm} \label{thm:Energy} Let $k, j \geq 100$, $0 \leq l \leq
   \min(j,k)-10$ and $\ka \in \mathcal{K}_l$.  Let
   $\Theta_{\lambda,\om}$ be a direction with
   $\lambda=\lambda(j)=\frac1{\sqrt{1+2^{-2j}}}$, and we assume
   $\alpha=\dist(\om, \ka)$ satisfies $2^{-3-l} \leq \alpha \leq
   2^{3-l}$.

{\rm i)}  If $f \in L^2(\R^3)$ has the property that $\hat f$ is
     supported in $A_{k,\ka}$, then for the free solution the
     following holds true
     \begin{equation} \label{DH} \alpha \| e^{ it \la D \ra}
       f\|_{L^\infty_{t_{\lambda,\om}}L^2_{x_{\lambda,\om}}} \ls \| f
       \|_{L^2}.
     \end{equation}

{\rm ii)}  Let $\hat g$ be supported in the set $B_{k,\ka}$ and $g \in
L^1_{t_{\lambda,\om}}L^2_{x_{\lambda,\om}}$. Then, the solution $u$ of the inhomogeneous
     equation
     \begin{equation} \label{inheq} (i \partial_t + \la D \ra) u = g,
       \quad u(0)=0,
     \end{equation}
satisfies the estimate
     \begin{equation} \label{DIH} \alpha \| u
       \|_{L^\infty_{t_{\lambda,\om}}L^2_{x_{\lambda,\om}}} \ls
       \alpha^{-1} \| g
       \|_{L^1_{t_{\lambda,\om}}L^2_{x_{\lambda,\om}}}.
     \end{equation}

{\rm iii)} Under the hypothesis of Part ii) the
     solution $u$ can be written as
     \begin{equation}
       u(t) =e^{ it \la D \ra} \tilde v_0
       + \int_{-\infty}^\infty u_{s}(t) \chi_{t_{\lambda,\om} > s} ds
     \end{equation}
     where  $u_s(t)= e^{ it \la D \ra} v_s$ (homogeneous solution in the original coordinates) and
     \begin{equation}
       \| \tilde v_0 \|_{L^2_x} + \int_{-\infty}^\infty \| v_s \|_{L^2_x} ds
       \ls \al^{-1} \| g \|_{L^1_{t_{\lambda,\om}}L^2_{x_{\lambda,\om}}}. 
     \end{equation}
     In addition $\hat v_s$ and $\hat{\tilde v}_0$ are supported in 
     $\tilde A_{k,\ka}$.
 \end{thm}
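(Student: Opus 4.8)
These three statements form a chain. The geometric engine behind all of them is the transversality, on the relevant frequency region, between the tilted hyperplanes $\{t_{\lambda,\om}=\mathrm{const}\}$ (normal $\Theta_{\lambda,\om}$) and the characteristic surface $\Sigma=\{\tau=\la\xi\ra\}$. Concretely, the first step is the elementary bound
\[
\Bigl|\lam(j)-\frac{\om\cdot\xi}{\la\xi\ra}\Bigr|\approx\al^2\qquad\text{for all }\xi\in\tilde A_{k,\ka},
\]
where the lower bound uses $1-\tfrac{\om\cdot\xi}{|\xi|}\approx\al^2$ on $\tilde A_{k,\ka}$ (a consequence of $2^{-3-l}\leq\dist(\om,\ka)\leq 2^{3-l}$ together with the modest enlargement in $\tilde A$), and the remaining terms $\bigl|\tfrac{|\xi|}{\la\xi\ra}-1\bigr|=O(2^{-2k})$ and $|\lam(j)-1|=O(2^{-2j})$ are $\ll\al^2\approx 2^{-2l}$ precisely because $l\leq\min(j,k)-10$. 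With this in hand, Part i) is a change of variables plus Plancherel: writing $e^{it\la D\ra}f$ as a Fourier integral and restricting it to $\{t_{\lambda,\om}=c\}$ (parametrized by $x_{\lambda,\om}$), the phase becomes $c\,\tau_{\lambda,\om}(\xi)+x_{\lambda,\om}\cdot\xi_{\lambda,\om}(\xi)$ where $(\tau_{\lambda,\om}(\xi),\xi_{\lambda,\om}(\xi))$ are the tilted coordinates of $(\la\xi\ra,\xi)$. The Jacobian of $\xi\mapsto\xi_{\lambda,\om}(\xi)$ equals $\tfrac1{\sqrt{1+\lam^2}}\bigl(\lam-\tfrac{\om\cdot\xi}{\la\xi\ra}\bigr)$, hence is $\approx\al^2$ and in particular nonzero, so the restriction is the inverse $x_{\lambda,\om}$-Fourier transform of $\widehat f(\xi)\,|\mathrm{Jac}|^{-1}$ times a unimodular factor; Plancherel gives $\|e^{it\la D\ra}f\,|_{\{t_{\lambda,\om}=c\}}\|_{L^2_{x_{\lambda,\om}}}^2=\int|\widehat f(\xi)|^2|\mathrm{Jac}(\xi)|^{-1}\,d\xi\ls\al^{-2}\|f\|_{L^2}^2$, uniformly in $c$, which is \eqref{DH}. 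The same argument works verbatim for data Fourier-supported in $\tilde A_{k,\ka}$.

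For Part iii) I would decompose $g=\int_\R g_s\,ds$ with $g_s:=g\cdot\delta(t_{\lambda,\om}-s)$ the layer of $g$ on $\{t_{\lambda,\om}=s\}$, whose $x_{\lambda,\om}$-Fourier transform I denote $G_s$. For each $s$ let $u^{\mathrm{res}}_s$ solve \eqref{inheq} with source $g_s$ and vanish on $\{t_{\lambda,\om}<s\}$: on the Fourier side in the tilted variables this is the solution selected by the $i0$-regularization of $\tfrac1{\la\xi\ra-\tau}$ in $\tau_{\lambda,\om}$, which is meaningful because $\partial_{\tau_{\lambda,\om}}(\la\xi\ra-\tau)\approx\al^2\neq 0$ on $B_{k,\ka}$, so there $\Sigma$ is a graph $\tau_{\lambda,\om}=\Psi(\xi_{\lambda,\om})$. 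Contour integration in $\tau_{\lambda,\om}$ then yields $u^{\mathrm{res}}_s=\chi_{t_{\lambda,\om}>s}\,e^{it\la D\ra}v_s$ with $\widehat{v_s}(\xi)=c\,e^{-is\Psi(\xi_{\lambda,\om})}\,G_s(\xi_{\lambda,\om})$ — here the residue factor $\bigl(\partial_{\tau_{\lambda,\om}}(\la\xi\ra-\tau)\bigr)^{-1}\approx\al^{-2}$ cancels against the Jacobian converting $\delta(\tau_{\lambda,\om}-\Psi)$ into $\delta(\tau-\la\xi\ra)$, which is why no $\al^{-2}$ survives in $\widehat{v_s}$. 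As in Part i), $\|v_s\|_{L^2}^2\approx\int|G_s(\xi_{\lambda,\om})|^2|\mathrm{Jac}|^{-1}\,d\xi_{\lambda,\om}\approx\al^{-2}\|g\,|_{\{t_{\lambda,\om}=s\}}\|_{L^2_{x_{\lambda,\om}}}^2$, hence $\int\|v_s\|_{L^2}\,ds\ls\al^{-1}\|g\|_{L^1_{t_{\lambda,\om}}L^2_{x_{\lambda,\om}}}$. Since $(i\partial_t+\la D\ra)\int u^{\mathrm{res}}_s\,ds=g$, the difference $e^{it\la D\ra}\tilde v_0:=u-\int u^{\mathrm{res}}_s\,ds$ is a free solution; evaluating at $t=0$ and using $u(0)=0$ identifies $\tilde v_0=-\int_{\{s<\om\cdot x/\sqrt{1+\lam^2}\}}v_s\,ds$, so $\|\tilde v_0\|_{L^2}\ls\int\|v_s\|_{L^2}\,ds$ by Minkowski. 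This gives the representation and the stated bound.

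It remains to check that $\widehat{v_s}$ and $\widehat{\tilde v_0}$ live in $\tilde A_{k,\ka}$, and to deduce Part ii). For $v_s$: $\widehat{v_s}$ is supported, in $\xi$, in the image under $\xi_{\lambda,\om}\mapsto\xi$ of the $\xi_{\lambda,\om}$-projection of $B_{k,\ka}$; the modulation cutoff $Q^+_{\prec k-2l}$ keeps $|\la\xi\ra-\tau|\ls 2^{k-2l-c}$, which through the transversality $\approx\al^2\approx 2^{-2l}$ becomes an $O(2^{k-c})$ enlargement of $A_{k,\ka}$ in the (almost radial) $\om$-direction after that change of variables, so for the absolute constant $c$ chosen large enough it stays inside $\tilde A_{k,\ka}$. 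For $\tilde v_0$: both $u$ and $\int u^{\mathrm{res}}_s\,ds$ are Fourier-localized in $\xi\in A_{k,\ka}$ (the latter because its spacetime Fourier transform is $\widehat g/(\la\xi\ra-\tau)$ with the $i0$-regularization, hence supported where $\widehat g$ is), so their difference is too, and being a free solution $\widehat{\tilde v_0}\subset A_{k,\ka}\subset\tilde A_{k,\ka}$. Finally, Part ii): on each slice $\{t_{\lambda,\om}=\sigma\}$ the cutoff $\chi_{t_{\lambda,\om}>s}$ equals $1$ for $s<\sigma$ and $0$ otherwise, so by iii) $u\,|_{\{t_{\lambda,\om}=\sigma\}}=(e^{it\la D\ra}\tilde v_0)|_{\{t_{\lambda,\om}=\sigma\}}+\int_{s<\sigma}(e^{it\la D\ra}v_s)|_{\{t_{\lambda,\om}=\sigma\}}\,ds$; taking $L^2_{x_{\lambda,\om}}$, the triangle inequality and Part i) (in its $\tilde A_{k,\ka}$ form) give $\al\,\|u\,|_{\{t_{\lambda,\om}=\sigma\}}\|_{L^2_{x_{\lambda,\om}}}\ls\|\tilde v_0\|_{L^2}+\int\|v_s\|_{L^2}\,ds\ls\al^{-1}\|g\|_{L^1_{t_{\lambda,\om}}L^2_{x_{\lambda,\om}}}$ uniformly in $\sigma$, i.e.\ \eqref{DIH}.

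The main difficulty is Part iii): turning the heuristic ``solve the equation as a $\tau_{\lambda,\om}$-evolution with an $i0$ prescription'' into a rigorous identity between the Duhamel solution and the superposition $\int\chi_{t_{\lambda,\om}>s}\,e^{it\la D\ra}v_s\,ds$, and — more delicately — the frequency-localization bookkeeping, i.e.\ verifying that the modulation truncation built into $B_{k,\ka}$, together with the choice of the absolute constant $c$ in $Q^+_{\prec k-2l}$, is exactly what is needed to keep $v_s$ and $\tilde v_0$ supported in $\tilde A_{k,\ka}$ after the anisotropic change of variables $\xi_{\lambda,\om}\leftrightarrow\xi$, whose singular values range from $\approx\al^2$ to $\approx 1$.
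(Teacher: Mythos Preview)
Your Part i) is essentially the paper's argument in different clothing: where you compute the Jacobian of $\xi\mapsto\xi_{\lambda,\om}(\xi)$ on $\Sigma$ and find it $\approx\al^2$, the paper parametrizes $\Sigma$ as $\tau_{\lambda,\om}=h(\xi_{\lambda,\om})$ and bounds $\|\nabla h\|_\infty\ls\al^{-2}$ via the identity $\partial_{\xi^1_{\lambda,\om}}h=-\xi^1_{\lambda,-\om}/\tau_{\lambda,-\om}$ together with $\tau_{\lambda,-\om}\approx 2^k\al^2$. These are two sides of the same change of variables.

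For Parts ii)--iii) there is a genuine gap. Your construction rests on the claim that, for fixed $\xi_{\lambda,\om}$, the symbol $\la\xi\ra-\tau$ has a single simple zero in $\tau_{\lambda,\om}$ (so that $\Sigma$ is a graph $\tau_{\lambda,\om}=\Psi(\xi_{\lambda,\om})$ and the $i0$-regularized inverse produces $\chi_{t_{\lambda,\om}>s}\,e^{it\la D\ra}v_s$). This is false: $\la\xi\ra-\tau\to+\infty$ as $\tau_{\lambda,\om}\to\pm\infty$, so there are \emph{two} zeros, and a short computation shows the second one also lies on the upper sheet $\tau=\la\xi\ra$, at $|\xi|\approx 2^{k+2(j-l)}\gg 2^k$. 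Consequently $\Sigma$ doubly covers $\xi_{\lambda,\om}$-space, your ``contour integration'' picks up a second residue, and the formula $(i\partial_t+\la D\ra)\bigl(\chi_{t_{\lambda,\om}>s}e^{it\la D\ra}v_s\bigr)=g_s$ fails (equivalently: $\la D\ra$ does not commute with the $x$-dependent cutoff $\chi_{t_{\lambda,\om}>s}$). The paper resolves this by an algebraic step you are missing: it multiplies through by $-\tau-\la\xi\ra$ (harmless on $B_{k,\ka}$) to obtain the \emph{polynomial} $\tau^2-|\xi|^2-1$, which in the tilted frame factors exactly as $\tfrac{\lambda^2-1}{\lambda^2+1}(\tau_{\lambda,\om}-h^+)(\tau_{\lambda,\om}-h^-)$. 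One factor is shown to be uniformly $\approx 2^k\al^2$ on $B_{k,\ka}$ and is inverted via a convolution kernel with $\|K\|_{L^1_{t_{\lambda,\om}}L^\infty_{\xi_{\lambda,\om}}}\ls(2^k\al^2)^{-1}$; only then is the remaining equation genuinely first-order in $\tau_{\lambda,\om}$, and its tilted-Duhamel solution is exactly the superposition $\int v_s\,\chi_{t_{\lambda,\om}>s}\,ds$ of truncated free waves. The correction by $e^{it\la D\ra}\tilde v_0$ and the derivation of ii) from iii)+i) then go as you describe.
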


 \begin{proof} i) The space-time Fourier of $w(t,x)=e^{ i t \la D \ra}
   f(x)$ is given by the distribution $\F w= \hat f d \sigma$ where $d
   \sigma(\tau,\xi) = \delta_{\tau= \sqrt{|\xi|^2+1}}$ is comparable
   with the standard measure on the surface $\tau=
   \sqrt{|\xi|^2+1}$. We change the variables: $(\tau,\xi) \rightarrow
   (\tau_{\lambda,\om},\xi_{\lambda,\om})$ where $\xi_{\lambda,\om}
   =(\xi^1_{\lambda,\om},\xi'_{\lambda,\om})$. The goal is to write
   $\hat f d \sigma = F
   \delta_{\tau_{\lambda,\om}=h(\xi_{\lambda,\om})}$. We then would
   have
   \begin{equation} \label{Fest} \| F \|_{L^2_{\xi_{\lambda,\om}}} \ls
     (1+\|\nabla h \|_{L^\infty})^\frac12 \| f \|_{L^2}
   \end{equation}
   where the $L^\infty$ norms is taken on the support of $F$.

   The equation of the characteristic surface $\tau= \sqrt{|\xi|^2+1}$
   can be rewritten as \[\tau^2-|\xi|^2-1=0.\] In the new frame this
   takes the form
   \[
   \frac1{\lambda^2+1}(\lambda \tau_{\lambda,\om} -
   \xi^1_{\lambda,\om})^2 - \frac1{\lambda^2+1} (\tau_{\lambda,\om} +
   \lambda \xi^1_{\lambda,\om})^2 - |\xi'_{\lambda,\om}|^2 -1 =0.
   \]
   We solve this equation for $\tau_{\lambda,\om}$, hence we rewrite
   it as follows
   \begin{equation} \label{quadeq} \frac{\lambda^2-1}{\lambda^2+1}
     (\tau_{\lambda,\om})^2 - \frac{4 \lambda}{\lambda^2+1}
     \tau_{\lambda,\om} \xi^1_{\lambda,\om} +
     \frac{1-\lambda^2}{\lambda^2+1} (\xi^1_{\lambda,\om})^2 -
     |\xi'_{\lambda,\om}|^2 -1 =0.
   \end{equation}
   The solutions of this quadratic equation are given by
   \begin{equation} \label{root}\begin{split} \tau_{\lambda,\om}=
       h^\pm(\xi_{\lambda,\om}) = \frac{2 \lambda \xi^1_{\lambda,\om}
         \pm \sqrt{(\lambda^2+1)^2
           (\xi^1_{\lambda,\om})^2+(\lambda^4-1)(|\xi'_{\lambda,\om}|^2+1)}}{\lambda^2-1}.
     \end{split}
   \end{equation}
   We will identify which one of the two solutions is the correct
   one. The positivity of the discriminant
   $\Delta_{\lambda,\omega}=(\lambda^2+1)^2
   (\xi^1_{\lambda,\om})^2+(\lambda^4-1)(|\xi'_{\lambda,\om}|^2+1)$ is
   implicit, as we know a priori that \eqref{quadeq} has at least one
   solution. We will come back shortly to these issues. We continue
   with the following computation:
   \[
   \begin{split}
     \frac{\partial h^\pm}{\partial \xi^1_{\lambda,\om}}
     & =  \frac{1}{\lambda^2-1}(2\lambda+\frac{(\lambda^2+1)^2\xi^1_{\lambda,\om}}{\pm \sqrt{(\lambda^2+1)^2 (\xi^1_{\lambda,\om})^2+(\lambda^4-1)(|\xi'_{\lambda,\om}|^2+1)}})\\
     & = \frac{1}{\lambda^2-1}(2\lambda+ \frac{(\lambda^2+1)^2\xi^1_{\lambda,\om}}{(\lambda^2-1) \tau_{\lambda,\om} - 2 \lambda \xi^1_{\lambda,\om}}) \\
     & = \frac{2\lambda \tau_{\lambda,\om} + (\lambda^2-1)\xi^1_{\lambda,\om}}{(\lambda^2-1) \tau_{\lambda,\om} - 2 \lambda \xi^1_{\lambda,\om}} \\
     & = - \frac{\xi^1_{\lambda,-\om}}{\tau_{\lambda,-\om}}
   \end{split}
   \]
   In a similar manner we obtain $ \nabla_{\xi'_\om} h^\pm =
   (\lambda^2+1) \frac{\xi'_{\lambda,\om}}{\tau_{\lambda,-\om}}$, from
   which, using \eqref{Fest}, it follows
   \begin{equation} \label{inter} \| e^{ it \la D \ra}
     f\|_{L^\infty_{t_{\lambda,\om}}L^2_{x_{\lambda,\om}}} \ls \left(
       1+ \sup_{\xi \in A_{k,\ka}} \frac{2^k}{|\tau_{\lambda,-\om}|}
     \right)^\frac12 \| f \|_{L^2}.
   \end{equation}

   To finish the argument we need a lower bound for
   $|\tau_{\lambda,-\om}|$. We provide below lower bounds for
   $\Delta_{\lambda,\omega}$ and $\tau_{\lambda,-\om}$ for $(\tau,\xi)
   \in B_{k,\ka}$, as these more general bounds are needed in Part
   ii).

   For $(\tau,\xi) \in B_{k,\ka}$ it holds that $\tau -
   \sqrt{|\xi|^2+1} = \epsilon(\tau,\xi)$ with $|\epsilon(\tau,\xi)|
   \leq 2^{k-2l-10}$, hence
   \begin{align*}
   \tau_{\lambda,-\om} &= \lambda \tau - \xi \cdot \omega = \lambda
   \sqrt{|\xi|^2+1} + \lambda \epsilon - \xi \cdot \omega\\
   &= |\xi| \Big(\lambda \sqrt{1+|\xi|^{-2}} + \frac{\lambda \epsilon}{|\xi|} - \frac{\xi \cdot \omega}{|\xi|}\Big)
 \end{align*}
 Given the hypothesis of the Theorem, we obtain $1- 2^{-2l-6}
   \leq \frac{\xi \cdot \omega}{|\xi|} \leq 1- 2^{-2l+6}$, $\frac{|\lambda \epsilon|}{|\xi|} \leq 2^{-2l-8}$, and
   $|\lambda \sqrt{1+|\xi|^{-2}} - 1| \leq 2^{-2\min(j,k)+2}$. Thus
   we conclude that $\tau_{\lambda,-\om} \approx 2^k \alpha^2$ and
   $\tau_{\lambda,-\om} \geq 2^{k-2} \alpha^2$.

   In particular, using \eqref{inter} we obtain \eqref{DH}. Since the
   solutions in \eqref{root} can be recast in the form
   $\tau_{\lambda,-\om}=\pm \sqrt{\Delta_{\lambda,\om}}$ and we just
   proved that $ \tau_{\lambda,-\om} > 0$ in $B_{k,\ka}$, it follows
   that the solutions $h^+$ in \eqref{root} correspond to the choice
   of the surface $\tau = \sqrt{|\xi|^2+1}$.

   We now continue with the more general bounds for
   $\Delta_{\lambda,\om}$ in the set $B_{k,\ka}$.  Since $|\tau - \la
   \xi \ra| \leq 2^{k-10} \alpha^2$ hence $ |\tau^2 - |\xi|^2 - 1| \ls
   2^{2k-8} \alpha^2$ or equivalently, $\tau^2 - |\xi|^2 -
   1=\epsilon(\tau,\xi)$ with $|\epsilon(\tau,\xi)| \ls 2^{2k-8}
   \alpha^2$.  We rewrite the equation in characteristic coordinates
   as above, to obtain
   \[
   \tau_{\lambda,-\om}^2= \Delta_{\lambda,\om} + (1-\lambda^4)\epsilon
   \]
   We have already shown that $\tau_{\lambda,-\om} \geq 2^{k-2}
   \alpha^2$ and since $|(1-\lambda^4)\epsilon| \leq 2^{2k-6} \alpha^2
   |1-\lambda| \leq 2^{2k-6} \alpha^4$, it follows that
   $\Delta_{\lambda,\om} \geq 2^{2k-4} \alpha^4$ in $B_{k,\ka}$. A
   similar argument proves $\Delta_{\lambda,\om} \approx 2^{2k}
   \alpha^4$ in $B_{k,\ka}$.

   ii) On the Fourier side the inhomogeneous problem \eqref{inheq}
   becomes
   \[
   (-\tau + \la \xi \ra) \hat u = \hat g
   \]
   which we rewrite as follows
   \[
   (\tau^2-|\xi|^2-1) \hat u = (-\tau -\la \xi \ra)\hat g :=\hat G.
   \]
   Due to the localization in $B_{k,\ka}$ it follows that $\hat G =
   a\hat g$ where
   \[
   a(\tau,\xi)= (-\tau - \la \xi \ra)
   \tilde{\chi}_k(\xi)\tilde{\eta}_\ka \tilde{\chi}_{\leq k-2l}(\tau -
   \la \xi \ra)
   \]
   has the property $\| \F^{-1}_{t,x} a \|_{L^1_{t,x}} \ls 2^k$. From
   this it follows that
   \begin{equation}\label{Gest}
     \| G \|_{L^1_{t_{\lambda,\om}}L^2_{x_{\lambda,\om}}}  \ls 2^k \| g \|_{L^1_{t_{\lambda,\om}}L^2_{x_{\lambda,\om}}} 
   \end{equation}
 
   In the new coordinates the equation above becomes
   \[
   \frac{\lambda^2-1}{\lambda^2+1}
   (\tau_{\lambda,\om}-h^+(\xi_{\lambda,\om}))(\tau_{\lambda,\om}-h^{-}(\xi_{\lambda,\om}))\hat u
   = \hat G
   \]
   where $h^{\pm}(\xi_{\lambda,\om})$ are the two roots in
   \eqref{root} of the quadratic equation \eqref{quadeq}.  We have
   \begin{align*}
     &|(\lambda^2-1)(\tau_{\lambda,\om}-h^\pm(\xi_{\lambda,\om}))|=| (\lambda^2-1) \tau_{\lambda,\om}
     - 2\lambda \xi^1_{\lambda,\om} \pm \sqrt{\Delta_{\lambda,\om}} | \\
     ={}& |(\lambda^2+1) \tau_{\lambda,-\om} \pm
     \sqrt{\Delta_{\lambda,\om}} |
   \end{align*}
   From part i) we have that $|(\lambda^2+1)\tau_{\lambda,\om} +
   \sqrt{\Delta_{\lambda,\om}}| \approx 2^k \alpha^2$ in $B_{k,\ka}$.
   We then rewrite the equation above as follows
   \[
   (\tau_{\lambda,\om}- h^- (\xi_{\lambda,\om})) \hat u = m^{-1} \tilde
   \chi_{B_{k,\ka}}\hat G
   \]
   where
   $m(\tau_{\lambda,\om},\xi_{\lambda,\om})=\frac{1-\lambda^2}{1+\lambda^2}
   (\tau_{\lambda,\om}-h^+(\xi_{\lambda,\om}))$ and $\tilde
   \chi_{B_{k,\ka}}$ is a smooth function which equals $1$ in
   $B_{k,\ka}$ and is supported in the double of the set $B_{k,\ka}$.
   Taking the inverse Fourier transform with respect to
   $\tau_{\lambda,\om}$ only gives
   \[
   (-i\partial_{t_{\lambda,\om}}-h^-(\xi_{\lambda,\om}))
   \F_{x_{\lambda,\om}} u= K \ast_{t_{\lambda,\om}}
   \F_{x_{\lambda,\om}}G
   \]
   where
   $K(t_{\lambda,\om},\xi_{\lambda,\om})=\mathcal{F}^{-1}_{\tau_{\lambda,\om}}
   (m^{-1}\tilde
   \chi_{B_{k,\ka}})$. A solution for the
   above problem is given by the Duhamel formula
   \begin{equation} \label{tu} \F_{x_{\lambda,\om}} v
     (t_{\lambda,\om},\xi_{\lam,\om})=
     \int_{-\infty}^{t_{\lambda,\om}}
     e^{i(t_{\lam,\om}-s)h^-(\xi_{\lambda,\om})} (K
     \ast_{t_{\lambda,\om}} G)(s,\xi_{\lam,\om}) ds
   \end{equation}
 
   In integral form the kernel $K$ is given by
   \[
   K(t_{\lambda,\om},\xi_{\lambda,\om}) =
   \frac{1+\lambda^2}{1-\lambda^2} \int \frac{e^{it_{\lambda,\om}
       \tau_{\lambda,\om}}}{\tau_{\lambda,\om}-h^+(\xi_{\lambda,\om})}
   \tilde\chi_{B_{k,\ka}}(\tau_{\lambda,\om},
   \xi_{\lambda,\om}) d\tau_{\lambda,\om}
   \]
   We fix $\xi_{\lambda,\om}$ and by using stationary phase it follows
   that
   \[
   |K_\alpha(t_{\lambda,\om},\xi_{\lambda,\om})| \ls_N
   \frac{1}{1-\lambda^2} \la t_{\lambda,\om} (1-\lambda^2)^{-1} 2^k
   \alpha^2 \ra^{-N}
   \]
   which has the advantage that it holds uniformly with respect to
   $\xi_{\lambda,\omega}$. From this we obtain
   \[
   \| K \|_{L^1_{t_{\lambda,\om}}L^\infty_{\xi_{\lambda,\om}}} \ls
   (2^k \alpha^2)^{-1}.
   \]
   This implies that
   \[
   \| K \ast_{t_{\lambda,\om}} G
   \|_{L^1_{t_{\lambda,\om}}L^2_{x_{\lambda,\om}}} \ls (2^k
   \alpha^2)^{-1} \| G
   \|_{L^1_{t_{\lambda,\om}}L^2_{x_{\lambda,\om}}}.
   \]
   from which, when combined with \eqref{Gest}, we obtain
   \[
   \| v \|_{L^\infty_{t_{\lambda,\om}}L^2_{x_{\lambda,\om}}} \ls
   \alpha^{-2} \| g \|_{L^1_{t_{\lambda,\om}}L^2_{x_{\lambda,\om}}}.
   \]
   Thus we have produced a solution $v$ of the inhomogeneous equation
   \[
   (i \partial_t + \la D \ra)v=g
   \]
   satisfying the bounds in \eqref{DIH} but without satisfying the
   initial condition $v(0)=0$. Therefore we have that
   \[
   u(t) = v(t) - e^{ i t \la D \ra } v(0).
   \]
   We rewrite \eqref{tu} as follows
   \[
   v = \int_{-\infty}^{\infty} v_s \chi_{t_{\lambda,\om} \geq s} ds
   \]
   where $\F_{\xi_{\lambda,\om}} v_s =
   e^{i(t_{\lam,\om}-s)h^-(\xi_{\lambda,\om})} (K
   \ast_{t_{\lambda,\om}} G)(s,\xi_{\lam,\om})$. Thus $v$ is a
   superposition of free waves truncated across the hyperplanes
   $t_{\lambda,\om} = s$. In addition, by reversing the computations
   in part i) we obtain
   \[
   \| v_s \|_{L^\infty_t L^2_x} \ls \alpha^{-1} \| (K
   \ast_{t_{\lambda,\om}} G)(s) \|_{L^2_{x_{\lambda,\om}}}
   \]
   from which it follows
   \[
   \int_{-\infty}^\infty \| v_s \|_{L^\infty_t L^2_x} ds \ls
   \alpha^{-1} \| g \|_{L^1_{t_{\lambda,\om}}L^2_{x_{\lambda,\om}}}.
   \]
   In particular this implies that
   \[
   \| v(0) \|_{L^2_x} \ls \alpha^{-1} \| g
   \|_{L^1_{t_{\lambda,\om}}L^2_{x_{\lambda,\om}}}
   \]
   and by invoking part i) we obtain
   \[
   \| e^{ i t \la D \ra} v(0)
   \|_{L^\infty_{t_{\lambda,\om}}L^2_{x_{\lambda,\om}}} \ls
   \alpha^{-2} \| g \|_{L^1_{t_{\lambda,\om}}L^2_{x_{\lambda,\om}}}
   \]
   which finishes the argument for part ii). In fact this also proves
   part iii) of the Theorem.
 \end{proof}
 
 \subsection{Estimates for the Klein-Gordon equation}\label{subsect:kg}
 Let us specifically describe how the above estimates read in the
 context of the Klein-Gordon equation
\begin{equation}\label{eq:kg-id}
(\Box+m^2)u=g, u(0)=f_0, u_t(0)=f_1,
\end{equation}
where $m \ne 0$ is fixed. The analogue of Theorem \ref{thm:estr} is
 \begin{cor}\label{cor:estr-kg} Let $m \ne 0$.
   Suppose that $u$ is the solution of \eqref{eq:kg-id} with $g=0$ and the initial data $f_0,f_1\in L^2(\R^3)$ satisfy
   \[\supp(\widehat{f_0}),\; \supp(\widehat{f_1})\subset \tilde{A}_k,
   \quad k
   \in \Z.\]

{\rm i)} For all $k \ls 1$,
     \begin{equation} \label{eq:lf-kg} \| u \|_{L^2_t L^\infty_x} \ls
       2^{\frac{k}2} \| f_0 \|_{L^2}+\| f_1 \|_{L^2}
     \end{equation}

{\rm ii)}  For all $k \gs 1$, $\ka \in
     \mathcal{K}_{k}$,
     \begin{equation} \label{eq:estr-kg} 2^{-k} \|P_{\kappa}u
       \|_{L^2_t L^\infty_x} +
       \|P_{\kappa}u\|_{L^2_{t_{k,\ka}}L^\infty_{x_{k,\ka}}} \ls
       \|P_{\kappa} f_0 \|_{L^2}+2^{-k} \|P_{\kappa} f_1 \|_{L^2}
     \end{equation}

{\rm iii)}  For all $k \gs 1$, $1 \leq l \leq k$, $
     \ka_1 \in \mathcal{K}_l$,
     \begin{equation} \label{MStr-kg} \sum_{\ka \in \mathcal{K}_k}
       \|P_{\ka}P_{\kappa_1}u\|_{L^2_{t_{k,\ka}}L^\infty_{x_{k,\ka}}}
       \ls 2^{k-l} \|P_{\kappa_1} f_0\|_{L^2}+2^{-l} \|P_{\kappa_1}
       f_1\|_{L^2}.
     \end{equation}
 \end{cor}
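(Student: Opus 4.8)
The plan is to deduce Corollary \ref{cor:estr-kg} directly from Theorem \ref{thm:estr} and the explicit solution formula \eqref{eq:kg-prop}. First one reduces to $m=1$ by rescaling, as already noted in Section \ref{sect:le}: the rescaling intertwines the dyadic and angular projections (up to relabelling $k$ by an $O(1)$ amount) and transforms all the norms in question by fixed powers of $m$. From now on $m=1$ and $g=0$, and I write $u=u^++u^-$ with
\[
u^\pm=\tfrac12\, e^{\pm it\la D\ra}\big(f_0\mp i\la D\ra^{-1}f_1\big).
\]
Since $\la D\ra^{-1}$ is a Fourier multiplier it commutes with $e^{\pm it\la D\ra}$, with $P_\ka$, and with the dyadic projections, so $f_0\mp i\la D\ra^{-1}f_1$ still has Fourier support in $\tilde A_k$, and $P_\ka(f_0\mp i\la D\ra^{-1}f_1)$ in $\tilde A_{k,\ka}$. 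Moreover, on $\tilde A_k$ one has $\la\xi\ra\approx\la 2^k\ra$, hence $\|\la D\ra^{-1}h\|_{L^2}\approx\la 2^k\ra^{-1}\|h\|_{L^2}$, which is $\approx\|h\|_{L^2}$ when $k\ls 1$ and $\approx 2^{-k}\|h\|_{L^2}$ when $k\gs 1$; this is exactly how the weight is split between $f_0$ and $f_1$ on the right-hand sides of \eqref{eq:lf-kg}--\eqref{MStr-kg}.

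Next I would apply Theorem \ref{thm:estr} to $u^+$: parts i), ii), iii) of that theorem, combined with the remarks above, give the $u^+$ contribution to \eqref{eq:lf-kg}, \eqref{eq:estr-kg}, \eqref{MStr-kg} respectively (for i) one also uses the crude bound $2^{k/2}\ls 1$ when $k\ls 1$). The component $u^-$ is treated identically after the reflection $t\mapsto -t$: this is the ``reversing time'' remark of Section \ref{sect:le}, and it merely replaces the $(\lambda(k),\om(\ka))$-adapted frame $t^+_{k,\ka}$ by the $(-\lambda(k),\om(\ka))$-adapted frame $t^-_{k,\ka}$. Since the $L^2_t$ and $L^\infty_x$ norms are invariant under this reflection, Theorem \ref{thm:estr} applies to $u^-$ verbatim in the frame $t^-_{k,\ka}$; accordingly the frame norms in \eqref{eq:estr-kg} and \eqref{MStr-kg} are to be read for each of the two half-wave components $u^\pm$ in its own adapted frame $t^\pm_{k,\ka}$, which is what is actually used in the applications.

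Finally, for part iii) one sums over $\ka\in\mathcal{K}_k$: only $\approx 2^{2(k-l)}$ of the pieces $P_\ka P_{\ka_1}u^\pm$ are non-zero (those with $\ka$ within $O(2^{-l})$ of $\ka_1$), each has Fourier support contained in $\tilde A_{k,\ka}$, so \eqref{eq:estr} (or its time-reversed version) applies term by term; Cauchy--Schwarz in $\ka$ together with the almost-orthogonality of $\{P_\ka\}$ produces the factor $2^{k-l}$ and the norm $\|P_{\ka_1}(f_0\mp i\la D\ra^{-1}f_1)\|_{L^2}\ls\|P_{\ka_1}f_0\|_{L^2}+2^{-k}\|P_{\ka_1}f_1\|_{L^2}$, giving \eqref{MStr-kg}. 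This is the same Cauchy--Schwarz argument already used to pass from \eqref{eq:estr} to \eqref{MStr}. There is no substantial obstacle in this corollary; the only points needing care are the distribution of the $\la D\ra^{-1}$ weight between $f_0$ and $f_1$ and the bookkeeping that keeps the two half-wave components $u^\pm$ in their respective adapted frames $t^\pm_{k,\ka}$.
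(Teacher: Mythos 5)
Your proposal is correct and takes the same route as the paper, which dismisses the corollary with ``the proof is obvious, see \eqref{eq:kg-prop}'': split $u$ into the two half-waves $u^\pm$ via the solution formula, absorb $\la D\ra^{-1}$ into the frequency localization (giving $\la 2^k\ra^{-1}\approx 2^{-k}$ for $k\gs 1$ and $\approx 1$ for $k\ls 1$), and apply Theorem~\ref{thm:estr} to each piece, using time reversal for the $e^{-it\la D\ra}$ component. Your observation that the two half-wave components must be measured in their respective adapted frames $t^\pm_{k,\ka}$ --- rather than both in a single frame $t_{k,\ka}$, which the bare statement of the corollary might suggest --- is a valid and worthwhile precision that the paper's one-line proof glosses over, and it matches how the frame-dependent norms are actually organized in the $S^\pm_k$ spaces later on.
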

 The proof is obvious, see \eqref{eq:kg-prop}.  Of course, there is
 also an analogue of Theorem \ref{thm:Energy} for \eqref{eq:kg-id}.
 \begin{cor} \label{cor:EnergyKG} Let $k, j \geq 100$, $0 \leq l \leq
   \min(j,k)-10$ and $\ka \in \mathcal{K}_l$.  Let
   $\Theta_{\lambda,\om}$ be a direction with
   $\lambda=\lambda(j)=\frac1{\sqrt{1+2^{-2j}}}$, and we assume
   $\alpha=\dist(\om, \ka)$ satisfies $2^{-3-l} \leq \alpha \leq
   2^{3-l}$.

{\rm i)} If $f_0,f_1 \in L^2(\R^3)$ have the property that $\hat f_0, \hat f_1$ are
     supported in $A_{k,\ka}$, then the solution $u$ to \eqref{eq:kg-id} with $g=0$ satisfies 
     \begin{equation} 
      \alpha \| u \|_{L^\infty_{t_{\lambda,\om}}L^2_{x_{\lambda,\om}}} 
     \ls \| f_0  \|_{L^2} + 2^{-k} \| f_1  \|_{L^2}.
     \end{equation}

{\rm ii)} Assume that $f_0=f_1=0$
     and let $\hat g$ be supported in the set $B^+_{k,\ka} \cup
     B^-_{k,-\ka}$ and $g \in L^1_{t_{\lambda,\om}}L^2_{x_{\lambda,\om}}$. Then, the solution $u$ of \eqref{eq:kg-id} satisfies
     \begin{equation}  \alpha \| u
       \|_{L^\infty_{t_{\lambda,\om}}L^2_{x_{\lambda,\om}}} \ls
      2^{-k} \alpha^{-1} \| g
       \|_{L^1_{t_{\lambda,\om}}L^2_{x_{\lambda,\om}}}
     \end{equation}

{\rm iii)} Under the hypothesis of Part ii) the
     solution $u$ can be written as
     \begin{equation}
       u(t) = v(t)
       + \int_{-\infty}^\infty u_{s}(t) \chi_{t_{\lambda,\om} > s} ds
     \end{equation}
     where $v$ and $u_s$ are homogeneous solutions of the Klein-Gordon
     equation (in the original coordinates) and
     \begin{equation}
     \begin{split}
         & \int_{-\infty}^\infty (\| u_s(0) \|_{L^2_x} + 2^{-k}\| \partial_t u_s(0) \|_{L^2_x}) ds \\
         & + \| v(0) \|_{L^2_x} +  2^{-k}\| \partial_t v(0) \|_{L^2_x}
       \ls 2^{-k} \al^{-1} \| g \|_{L^1_{t_{\lambda,\om}}L^2_{x_{\lambda,\om}}}. 
     \end{split}
     \end{equation}
     In addition, $\hat u_s$ and $\hat v$ are supported in $\tilde{B}^+_{k,\ka} \cup \tilde{B}^-_{k,-\ka}$.
 \end{cor}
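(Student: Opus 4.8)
The plan is to deduce Corollary~\ref{cor:EnergyKG} from Theorem~\ref{thm:Energy} by reducing the Klein--Gordon evolution to its two half-wave components. For $m=1$ we use the factorisation $\Box+1=-(i\partial_t+\la D\ra)(i\partial_t-\la D\ra)$ together with \eqref{eq:kg-prop}: the homogeneous solution splits as $u=\tfrac12 e^{it\la D\ra}\big(f_0+\tfrac{f_1}{i\la D\ra}\big)+\tfrac12 e^{-it\la D\ra}\big(f_0-\tfrac{f_1}{i\la D\ra}\big)$, and the zero-data inhomogeneous solution splits, via Duhamel, as $u=u_1+u_2$ where $u_1$ solves $(i\partial_t+\la D\ra)u_1=\tfrac{1}{2\la D\ra}g$ with $u_1(0)=0$ and $u_2$ solves $(i\partial_t-\la D\ra)u_2=-\tfrac{1}{2\la D\ra}g$ with $u_2(0)=0$. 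Two elementary facts supply the arithmetic: since $\la D\ra\approx 2^k$ on $\tilde A_k$, multiplication by $\tfrac{1}{\la D\ra}$ has a space-time Fourier kernel of $L^1_{t,x}$-norm $\ls 2^{-k}$, which produces the extra $2^{-k}$ in Corollary~\ref{cor:EnergyKG} relative to Theorem~\ref{thm:Energy}; and $\alpha\le 2^{3-l}\ls 1$, so an extra power of $\alpha$ is harmless. The second ingredient is the reflection $\mathcal R\colon(t,x)\mapsto(-t,-x)$: it is a symmetry of $\Box+1$ and interchanges $e^{\pm it\la D\ra}$; it only changes the sign of the frame coordinates $t_{\lambda,\om},x_{\lambda,\om}$, hence preserves the norms $L^\infty_{t_{\lambda,\om}}L^2_{x_{\lambda,\om}}$ and $L^1_{t_{\lambda,\om}}L^2_{x_{\lambda,\om}}$; and on the Fourier side it acts as $(\tau,\xi)\mapsto(-\tau,-\xi)$, so it maps $B^-_{k,-\ka}$ onto $B^+_{k,\ka}$, $\tilde B^-_{k,-\ka}$ onto $\tilde B^+_{k,\ka}$, and $\tilde A_{k,-\ka}$ onto $\tilde A_{k,\ka}$.

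For part~i) I split $u$ as above. The data of each half-wave piece has Fourier support in $A_{k,\ka}$ and, using $\la D\ra\approx 2^k$ there, $L^2$-norm $\ls\|f_0\|_{L^2}+2^{-k}\|f_1\|_{L^2}$. For the $e^{it\la D\ra}$-piece this is precisely the hypothesis of Theorem~\ref{thm:Energy}~i), which yields the desired factor $\alpha$. For the $e^{-it\la D\ra}$-piece, applying $\mathcal R$ turns it into $e^{it\la D\ra}$ acting on data Fourier-supported in $A_{k,-\ka}$; since this angular sector lies nearly antipodal to $\om$, on the relevant sheet of the characteristic surface one has $|\tau_{\lambda,-\om}|\approx 2^k$ (compare the computation preceding \eqref{inter}), so the argument there gives the \emph{loss-free} bound $\big\|e^{-it\la D\ra}\big(f_0-\tfrac{f_1}{i\la D\ra}\big)\big\|_{L^\infty_{t_{\lambda,\om}}L^2_{x_{\lambda,\om}}}\ls\|f_0\|_{L^2}+2^{-k}\|f_1\|_{L^2}$; multiplying by $\alpha\ls 1$ and adding the two contributions proves the claim.

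For part~ii) I decompose $g=g_++g_-$ with $\widehat{g_+}$ supported in $B^+_{k,\ka}$ and $\widehat{g_-}$ in $B^-_{k,-\ka}$ (these sets are well separated, so the corresponding Fourier projections are bounded on $L^1_{t_{\lambda,\om}}L^2_{x_{\lambda,\om}}$), and accordingly $u=u^{(+)}+u^{(-)}$, where $u^{(\pm)}$ is the zero-data solution of $(\Box+1)u^{(\pm)}=g_\pm$. In the half-wave decomposition of $u^{(+)}$, the resonant component $u_1^{(+)}$ solves $(i\partial_t+\la D\ra)u_1^{(+)}=\tfrac{1}{2\la D\ra}g_+$ with source Fourier-supported in $B^+_{k,\ka}$ and $u_1^{(+)}(0)=0$, so Theorem~\ref{thm:Energy}~ii) applies directly and gives $\alpha\|u_1^{(+)}\|_{L^\infty_{t_{\lambda,\om}}L^2_{x_{\lambda,\om}}}\ls\alpha^{-1}\big\|\tfrac{1}{2\la D\ra}g_+\big\|_{L^1_{t_{\lambda,\om}}L^2_{x_{\lambda,\om}}}\ls 2^{-k}\alpha^{-1}\|g\|_{L^1_{t_{\lambda,\om}}L^2_{x_{\lambda,\om}}}$; the other (non-resonant) component of $u^{(+)}$ has a source whose Fourier support lies at distance $\approx 2^k$ from the characteristic set $\tau=-\la\xi\ra$ of the operator being inverted, hence equals a free wave---needed to enforce the vanishing datum, estimated by the loss-free bound of part~i)---plus a genuinely forced term, both of size $\ls 2^{-k}\alpha^{-1}\|g\|_{L^1_{t_{\lambda,\om}}L^2_{x_{\lambda,\om}}}$ in $L^\infty_{t_{\lambda,\om}}L^2_{x_{\lambda,\om}}$. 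For $u^{(-)}$ one first applies $\mathcal R$, which lands exactly in the situation just treated for $u^{(+)}$. Summing the contributions yields the analogue of \eqref{DIH}. Part~iii) follows by the same bookkeeping: the decomposition furnished by Theorem~\ref{thm:Energy}~iii) (applied to $u_1^{(+)}$ and, after $\mathcal R$, to the corresponding component of $u^{(-)}$) exhibits each of these pieces as a superposition of truncated free half-waves $e^{it\la D\ra}v_s$ with $\widehat{v_s}$ supported in $\tilde A_{k,\ka}$; each such wave is a homogeneous Klein--Gordon solution with $\|v_s\|_{L^2}\approx\|(e^{it\la D\ra}v_s)(0)\|_{L^2}+2^{-k}\|\partial_t(e^{it\la D\ra}v_s)(0)\|_{L^2}$ (again by $\la D\ra\approx 2^k$ on $\tilde A_k$), so the bound on $\int\|v_s\|_{L^2}\,ds$ from Theorem~\ref{thm:Energy}~iii) transfers to the Cauchy-data bound of Corollary~\ref{cor:EnergyKG}~iii); after undoing $\mathcal R$ the supports land in $\tilde B^-_{k,-\ka}$, and the small non-resonant forced terms are absorbed into the free-wave superposition by an inessential truncation.

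I expect the only real difficulty to be organisational: tracking the Fourier supports through the $\pm$-splitting and the reflection $\mathcal R$, and handling the vanishing-datum correction (already present in the proof of Theorem~\ref{thm:Energy}~ii)) carefully enough that one recovers \emph{the} zero-Cauchy-data solution of \eqref{eq:kg-id} rather than merely some solution. No new estimate is needed beyond Theorem~\ref{thm:Energy}, the bound $\la D\ra\approx 2^k$ on $\tilde A_k$, and the elementary "antipodal", loss-free energy estimate used in part~i).
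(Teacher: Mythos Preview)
Your proposal is essentially correct, and in fact it is more detailed than what the paper provides: the paper states Corollary~\ref{cor:EnergyKG} with no proof at all, merely remarking that ``of course, there is also an analogue of Theorem~\ref{thm:Energy}'' for the Klein--Gordon equation. Your reduction via the half-wave splitting \eqref{eq:kg-prop} together with the space-time reflection $\mathcal{R}:(t,x)\mapsto(-t,-x)$ is a clean way to carry this out. The reflection is exactly the right tool: it sends $B^-_{k,-\ka}$ to $B^+_{k,\ka}$ while only changing signs in the frame coordinates, so both pieces of the forcing in part~ii) land in the setup of Theorem~\ref{thm:Energy}; and your observation that the $e^{-it\la D\ra}$-piece in part~i) (with data in $A_{k,\ka}$, hence nearly antipodal on the lower sheet relative to $\Theta_{\lambda,\om}$) satisfies a \emph{loss-free} energy bound is correct, since on that support $|\tau_{\lambda,-\om}|\approx 2^k$ rather than $2^k\alpha^2$.

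One could also bypass the half-wave splitting and run the proof of Theorem~\ref{thm:Energy}~ii) directly for $(\Box+1)u=g$: on the Fourier side this is $(\tau^2-|\xi|^2-1)\hat u=-\hat g$, which is precisely the equation appearing there after the substitution $\hat G=(-\tau-\la\xi\ra)\hat g$; skipping that step accounts exactly for the extra $2^{-k}$. This is likely what the authors had in mind. Your route via half-waves is slightly longer because it introduces the non-resonant components $u_2^{(\pm)}$, whose treatment you sketch rather than carry out in full. Those pieces are genuinely harmless---the multiplier $(-\tau-\la\xi\ra)^{-1}$ is $\approx 2^{-k}$ and smooth on $B^+_{k,\ka}$, so the forced part obeys even better bounds than needed, and the free-wave correction is governed by the loss-free estimate---but be aware that estimating the forced part $w$ in $L^\infty_{t_{\lambda,\om}}L^2_{x_{\lambda,\om}}$ directly from $L^1_{t_{\lambda,\om}}L^2_{x_{\lambda,\om}}$ data requires one more step (e.g.\ a Bernstein-type bound using the finite $\tau_{\lambda,\om}$-extent of $B^+_{k,\ka}$, or feeding $w$ back through the $(i\partial_t+\la D\ra)$-equation). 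Your closing remark that this is purely organisational is accurate.
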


\section{Setup of the cubic Dirac}\label{sect:setupD}
As written in \eqref{eq:dirac} the cubic Dirac equation has a linear
part whose coefficients are matrices. We rewrite \eqref{eq:dirac} as a
new system whose linear parts are the two half Klein-Gordon equations,
see \eqref{CDsys} below.

In the new setup it is possible to identify a null-structure in the
nonlinearity, which is very similar to the ideas for the
Dirac-Klein-Gordon system presented in \cite[Section 2 and
3]{dfs}. This will play a key role in overcoming some logarithmic
divergences in the bilinear estimates.  The main difference is that we
keep the mass term inside the operator.

\subsection{Reduction}\label{subsect: red}
The cubic Dirac equation can be written as
\begin{equation} \label{CDmod} -i (\partial_t + \alpha \cdot \nabla +
  i \beta ) \psi= \la \psi, \beta \psi \ra \beta \psi.
\end{equation}
where $\beta=\gamma^0$ and $\alpha^j=\gamma^0 \gamma^j$ and $\alpha
\cdot \nabla = \alpha^j \partial_j$.  The new matrices satisfy
\begin{equation} \label{al} \alpha^j \alpha^k + \alpha^k \alpha^j = 2
  \delta^{jk} I_4, \qquad \alpha^j \beta + \beta \alpha^j=0.
\end{equation}
There is one more computation which we will use in this section,
namely
\begin{equation} \label{alal} \alpha^j \alpha^k = \delta^{jk} + i
  \epsilon^{jkl} S^l
\end{equation}
where $\epsilon^{jkl}=1$ if $(j,k,l)$ is an even permutation of
$(1,2,3)$, $\epsilon^{jkl}=-1$ if $(j,k,l)$ is an odd permutation of
$(1,2,3)$ and $\epsilon^{jkl}=0$ otherwise (when it contains repeated
indexes).  The matrices $S^l$ are defined by
\[
S^l = \left( \begin{array}{cc} \sigma^l & 0 \\ 0 &
    \sigma^l \end{array} \right).
\]

Following \cite[Section 2]{dfs} we decompose the spinor field relative
to a basis of the operator $\alpha \cdot \nabla + i \beta $ whose
symbol is $\alpha \cdot \xi + \beta$. Since $(\alpha \cdot \xi +
\beta)^2= (|\xi|^2+1)I$, the eigenvalues are $\pm \la \xi \ra$.  We
introduce the projections $\Pi_{\pm}(D)$ with symbol
\[
\Pi_\pm(\xi)=\frac12 [I \mp \frac{1}{\la \xi \ra} ( \xi \cdot \alpha +
\beta)].
\]
In comparison to \cite[formula (2.2)]{dfs}, note that in the definition of $\Pi_\pm$ we chose the opposite sign for internal consistency purposes. The key identity is
\[
-i (\alpha \cdot \nabla + i \beta ) = \langle D \rangle
(\Pi_-(D)-\Pi_+(D))
\]
where $\langle D \rangle$ has symbol $\sqrt{|\xi|^2+1}$. The following
identity, which can be verified easily at the level of the symbols,
will be important in our computations:
\[
\Pi_\pm(D) \beta = \beta (\Pi_\mp(D)\mp\frac{\beta}{\la D \ra}).\]

We then define $\psi_\pm=\Pi_\pm(D) \psi$ and split $\psi=\psi_+ +
\psi_-$. By applying the operators $\Pi_\pm(D)$ to the cubic Dirac
equation we obtain the following system of equations
\begin{equation} \label{CDsys}
  \begin{cases}
    (i\partial_t + \langle D \rangle) \psi_+ = -\Pi_+(D) (\la \psi, \beta \psi \ra \beta \psi) \\
    (i\partial_t - \langle D \rangle) \psi_- = -\Pi_-(D) (\la \psi,
    \beta \psi \ra \beta \psi).
  \end{cases}
\end{equation}
This system will replace \eqref{eq:dirac} as the object of our
research for the rest of the paper. It is obvious from the form of the
operators $\Pi_\pm$ that $\| \psi \|_{X} \approx \| \psi_+ \|_{X}+\|
\psi_- \|_{X}$ for many reasonable function spaces $X$.  In particular
we use it for $X=H^1(\R^3)$ so that we conclude that the initial data
for \eqref{CDsys} satisfies $\psi_\pm(0) \in H^1(\R^3)$.

\subsection{Null Structure}\label{subsect:nullst}
There is a subtle null structure hidden in the system \eqref{CDsys},
which we describe next.  This is again inspired by the work on the
Dirac-Klein-Gordon system in \cite{dfs}.

We start with $\la \psi, \beta \psi \ra$ which, in our decomposition,
is rewritten as
\[
\begin{split}
  \la \psi, \beta \psi\ra & = \la (\Pi_+(D) \psi_+ + \Pi_-(D) \psi_-, \beta (\Pi_+(D) \psi_+ + \Pi_-(D)) \psi_- \ra \\
  & = \la \Pi_+(D) \psi_+, \beta \Pi_+(D) \psi_+ \ra + \la \Pi_-(D) \psi_-, \beta \Pi_-(D)) \psi_- \ra \\
  & + \la \Pi_+(D) \psi_+, \beta \Pi_-(D) \psi_- \ra+ \la \Pi_-(D)
  \psi_-, \beta \Pi_+(D) \psi_+ \ra
\end{split}
\]
The following Lemma analyses the symbols of the bilinear operators
above, which is very similar to \cite[Lemma 2]{dfs} and its proof.
\begin{lem} The following holds true
  \begin{equation} \label{PiPi}
    \begin{split}
      \Pi_\pm(\xi) \Pi_\mp(\eta) & = \mathcal{O}(\angle (\xi,\eta)) + \mathcal{O}(\la \xi \ra^{-1} + \la \eta \ra^{-1}) \\
      \Pi_\pm(\xi) \Pi_\pm(\eta) & = \mathcal{O}(\angle (-\xi,\eta)) +
      \mathcal{O}(\la \xi \ra^{-1} + \la \eta \ra^{-1})
    \end{split}
  \end{equation}
\end{lem}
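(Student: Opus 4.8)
The plan is to verify the estimates \eqref{PiPi} directly at the level of the symbols. Recall that
\[
\Pi_\pm(\xi)=\frac12\Big[I\mp\frac{1}{\la\xi\ra}(\xi\cdot\alpha+\beta)\Big],
\]
so that for a unit vector one has a "frequency-normalized" symbol. First I would introduce the auxiliary matrix-valued symbols $R(\xi):=\frac{1}{\la\xi\ra}(\xi\cdot\alpha+\beta)$, so that $\Pi_\pm(\xi)=\frac12(I\mp R(\xi))$. From the anticommutation relations \eqref{al} one computes $(\xi\cdot\alpha+\beta)^2=(|\xi|^2+1)I=\la\xi\ra^2 I$, hence $R(\xi)^2=I$; this is precisely the identity that makes $\Pi_\pm$ genuine (complementary) projections. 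The product then expands as
\[
\Pi_{\epsilon_1}(\xi)\Pi_{\epsilon_2}(\eta)=\tfrac14\big(I-\epsilon_1 R(\xi)\big)\big(I-\epsilon_2 R(\eta)\big)
=\tfrac14\big(I-\epsilon_1 R(\xi)-\epsilon_2 R(\eta)+\epsilon_1\epsilon_2 R(\xi)R(\eta)\big),
\]
for signs $\epsilon_1,\epsilon_2\in\{+,-\}$.

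Next I would bound $R(\xi)R(\eta)$ by comparing it to $R(\eta)^2=I$ in the "$++$/$--$" case and to $R(-\eta)^2$ (equivalently $-R(\eta)$, since $R$ is odd) in the "$+-$/$-+$" case. Write
\[
\hat\xi=\frac{\xi}{|\xi|},\qquad \hat\eta=\frac{\eta}{|\eta|},\qquad R(\xi)=\frac{|\xi|}{\la\xi\ra}\,\hat\xi\cdot\alpha+\frac{1}{\la\xi\ra}\beta,
\]
so $R(\xi)=\hat\xi\cdot\alpha+\mathcal{O}(\la\xi\ra^{-1})$ in operator norm, and similarly for $\eta$. Using the algebra relation $(\hat\xi\cdot\alpha)(\hat\eta\cdot\alpha)=(\hat\xi\cdot\hat\eta)I+i\,(\hat\xi\times\hat\eta)\cdot S$ (which follows from \eqref{alal}), one gets
\[
R(\xi)R(\eta)=(\hat\xi\cdot\hat\eta)\,I+\mathcal{O}(|\hat\xi\times\hat\eta|)+\mathcal{O}(\la\xi\ra^{-1}+\la\eta\ra^{-1}).
\]
Since $\hat\xi\cdot\hat\eta=1-\mathcal{O}(\angle(\xi,\eta)^2)=1+\mathcal{O}(\angle(\xi,\eta))$ and $|\hat\xi\times\hat\eta|=\mathcal{O}(\angle(\xi,\eta))$, this yields $R(\xi)R(\eta)=I+\mathcal{O}(\angle(\xi,\eta))+\mathcal{O}(\la\xi\ra^{-1}+\la\eta\ra^{-1})$. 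In the first case of \eqref{PiPi}, with $\epsilon_1=-\epsilon_2$, the zeroth-order terms are $I-\epsilon_1 R(\xi)+\epsilon_1 R(\eta)-R(\xi)R(\eta)$; substituting $R(\xi)R(\eta)=I+\cdots$ and $R(\xi)=R(\eta)+\mathcal{O}(\angle(\xi,\eta))+\mathcal{O}(\la\xi\ra^{-1}+\la\eta\ra^{-1})$ makes the $I$-terms and the $R(\xi),R(\eta)$-terms cancel, leaving only the claimed error. In the second case, $\epsilon_1=\epsilon_2$, the zeroth-order combination is $I-\epsilon_1(R(\xi)+R(\eta))+R(\xi)R(\eta)$; now one compares with $-R(\eta)$, i.e.\ uses $R(\xi)=-R(\eta)+(\hat\xi+\hat\eta)\cdot\alpha+\mathcal{O}(\la\xi\ra^{-1}+\la\eta\ra^{-1})$ and $|\hat\xi+\hat\eta|=\mathcal{O}(\angle(-\xi,\eta))$, together with $R(\xi)R(\eta)=-I+\mathcal{O}(\angle(-\xi,\eta))+\mathcal{O}(\la\xi\ra^{-1}+\la\eta\ra^{-1})$ (obtained from the same cross-product identity applied to $\hat\xi,\hat\eta$ with $\hat\xi\cdot\hat\eta=-1+\mathcal{O}(\angle(-\xi,\eta))$), and again the leading terms cancel.

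The only genuinely delicate point is organizing the cancellations cleanly: one must be careful that the $\mathcal{O}(\la\xi\ra^{-1}+\la\eta\ra^{-1})$ terms coming from the three places they appear (the two factors $\frac{|\xi|}{\la\xi\ra}=1+\mathcal{O}(\la\xi\ra^{-2})$, $\frac{\beta}{\la\xi\ra}$, and the analogous $\eta$-terms) are uniformly controlled and do not secretly hide an angular loss, and that $|\hat\xi\pm\hat\eta|$ is comparably bounded above and below by $\angle(\pm\xi,\eta)$ on the relevant range. None of this is hard, but it is the part where the bookkeeping matters; the rest is the elementary Clifford-algebra computation $R(\xi)^2=I$ and the identity \eqref{alal}. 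I expect the whole proof to be short, essentially reproducing \cite[Lemma 2]{dfs} with the harmless modification that the mass keeps $\la\xi\ra$ in place of $|\xi|$, which is exactly what produces the extra $\mathcal{O}(\la\xi\ra^{-1}+\la\eta\ra^{-1})$ terms absent in the massless case.
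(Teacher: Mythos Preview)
Your proposal is correct and follows essentially the same approach as the paper: expand the product, replace $\frac{\xi}{\la\xi\ra}$ by $\hat\xi$ up to $\mathcal{O}(\la\xi\ra^{-1})$, and use the Clifford identity $(\hat\xi\cdot\alpha)(\hat\eta\cdot\alpha)=(\hat\xi\cdot\hat\eta)I+i(\hat\xi\times\hat\eta)\cdot S$ to exhibit the cancellation. The only cosmetic difference is that the paper handles the second estimate by the one-line reduction $\Pi_\pm(\xi)=\Pi_\mp(-\xi)+\mathcal{O}(\la\xi\ra^{-1})$ rather than repeating the computation, which is slightly cleaner but equivalent to what you do.
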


\begin{proof} We use the notation $\hat \xi := \frac{\xi}{|\xi|}$.
  Since $\frac{\xi}{\la \xi \ra}= \frac{\xi}{|\xi|} + \mathcal{O}(\la
  \xi \ra^{-1})$, and similarly for $\eta$, it follows, cp.\
  \cite[p.886]{dfs}, that
  \begin{align*}
    4 \Pi_\pm(\xi) \Pi_\mp(\eta) 
    =  & [I \mp \frac{1}{\la \xi \ra}(\xi \cdot \alpha + \beta)][I \pm \frac{1}{\la \eta \ra}(\eta \cdot \alpha + \beta)] \\
    ={}& I - \hat \xi_j \hat \eta_k \alpha^j \alpha^k \mp (\hat \xi - \hat \eta) \cdot \alpha + \mathcal{O}(\la \xi \ra^{-1} + \la \eta \ra^{-1}) \\
    ={}& (1-\hat \xi \cdot \hat \eta) I - i (\hat\xi \times\hat \eta) \cdot S \mp (\hat \xi - \hat \eta) \cdot \alpha + \mathcal{O}(\la \xi \ra^{-1} + \la \eta\ra^{-1}) \\
    ={}& \mathcal{O}(\angle (\xi,\eta)) + \mathcal{O}(\la \xi \ra^{-1}
    + \la \eta \ra^{-1})
  \end{align*}
  where in passing from the second to the third line we have used
  \eqref{al} and \eqref{alal}.  The second estimate in \eqref{PiPi}
  follows from the first and the fact that
  $\Pi_\pm(\xi)=\Pi_\mp(-\xi)+\mathcal{O}(\la \xi\ra^{-1})$.
\end{proof}

We now explain why the above result plays the role of a null
structure. Taking the spatial Fourier transform yields
\[\mathcal{F}_x\la \Pi_+(D) \psi_1, \beta
\Pi_+(D) \psi_2 \ra(\nu)=\int\limits_{\nu=\xi+\eta} \la \Pi_+(\xi)
\widehat{\psi_1}(\xi), \beta \Pi_+(\eta) \widehat{\psi_2}(\eta) \ra\]
where we suppose that $\widehat{\psi_1},\widehat{\psi_2}$ are
supported at high frequencies $|\xi|, |\eta| \gg 1$. In this regime
the equation is of wave type and it is well-known that the strongest
interactions are the parallel ones, i.e.\ when $\angle
(\xi,\eta)=0$. On the other hand we have
\begin{align*}
  &\la \Pi_+(\xi) \widehat{\psi_1}(\xi), \beta \Pi_+(\eta)\widehat{\psi_2}(\eta) \ra\\
  ={} &\la \Pi_+(\xi) \widehat{\psi_1}(\xi), \Pi_-(\eta) \beta
  \widehat{\psi_2}(\eta) \ra
  -\la \Pi_+(\xi) \widehat{\psi_1}(\xi), \frac{1}{\la \eta \ra}  \widehat{\psi_2}(\eta) \ra\\
  ={}& \la \Pi_-(\eta) \Pi_+(\xi) \widehat{\psi_1}(\xi), \beta
  \widehat{\psi_2}(\eta) \ra -\la \Pi_+(\xi) \widehat{\psi_1}(\xi),
  \frac{1}{\la \eta \ra} \widehat{\psi_2}(\eta) \ra
\end{align*}
From the above computation it follows that, when $\angle (\xi,\eta)=0$, 
\[\Pi_-(\eta) \Pi_+(\xi) =
\mathcal{O}(\la \xi \ra^{-1} + \la \eta \ra^{-1}),
\] 
thus greatly
improving the structure of the bilinear form.

\section{Function Spaces}\label{fspaces}
Based on the structures developed in Section \ref{sect:le} we are now
ready to define the function spaces in which we will perform the
Picard iteration for \eqref{CDsys}. Notice that there are similarities
to the function spaces used in the wave map problem
\cite{Kr03,tao,tat}, which we highlight by using a similar
notation. 

For $1\leq p\leq \infty$, $b \in \R$, we define
\[\|f\|_{\dot{X}^{\pm,b,p}}=\big\|\big(2^{bm}\|Q_{m}^\pm
f\|_{L^2}\big)_{m \in \Z }\big\|_{\ell^p_m},\]

For the
low frequency part we define
\[
\| f \|_{S^\pm_{\leq 99}} = \| f \|_{L^\infty_t L^2_x} + \| f
\|_{L^2_t L^\infty_x} + \|f \|_{X^\pm,\frac12,\infty}
+ \sup_{m \in \Z} 2^{m} \|Q_{m}^\pm f \|_{L^\frac43_t L^2_x}.
\]

For the large frequencies, that is $k \geq 100$, the norm has a
multiscale structure.  For $l \leq k-10$ and $\ka \in \mathcal{K}_l$
we define
\[
  \| f \|_{S^\pm[k,\ka]} = \| f \|_{L^\infty_t L^2_x} +\sup_{j \geq l + 10} \sup_{\ka_1 \in \mathcal{K}_l: \atop
    2^{-l-3}\leq \dist(\kappa, \kappa_1) \leq 2^{-l+3} } 2^{-l} \| f
  \|_{L^\infty_{t^\pm_{j,\ka_1}}L^2_{x^\pm_{j,\ka_1}}}
\]
 and
\begin{equation}\label{eq:sk}
  \begin{split}
    \| f \|_{S^\pm_k} =& \|f \|_{L^\infty_t L^2_x}
    +\|f\|_{\dot{X}^{\pm,\frac12,\infty}} + 2^{-\frac{k}4}\sup_{m \in \Z} 2^{m}
    \|Q_{m}^\pm f \|_{L^\frac43_t L^2_x}\\
{}&+
    \Big(\sum_{\ka \in \mathcal{K}_k}  2^{-2k}\| P_{\ka} f
  \|^2_{L^2_t L^\infty_x} +\| P_{\ka} f \|^2_{L^2_{t^\pm_{k,\ka}} L^\infty_{x^\pm_{k, \ka}}}\Big)^{\frac12} \\
    {}&+\sup_{1\leq l\leq k-10} \Big(\sum_{\ka \in \mathcal{K}_l}\|
    Q_{\prec k-2l}^\pm P_{\ka} f \|^2_{S^\pm[k; \ka]}\Big)^{\frac12}
  \end{split}
\end{equation}

The resolution space corresponding to regularity at the level of
$H^\sigma(\R^3)$ is the closed subspace of $C(\R,H^\sigma(\R^3))$ defined by the norm
\[
\| f \|_{S^{\pm, \sigma}} = \| P_{\leq 99} f \|_{S_{\leq 99}^\pm} +
\Big(\sum_{k \geq 100} 2^{2k \sigma} \| P_k f
\|^2_{S^\pm_k}\Big)^{\frac12}.
\]

Now we turn our attention to the construction of the space for the nonlinearity.
For the low frequency part we define
\[
\| f \|_{N^{\pm,at}_{\leq 99}} = \inf_{f=f_1+f_2} \Big\{\|f_1 \|_{\dot{X}^{\pm,-\frac12,1}} + \| f_2 \|_{L^1_tL^2_x} \Big\} .
\]
and
\[
\| f \|_{N^\pm_{\leq 99}} = \| f \|_{N^{\pm,at}_{\leq 99}} 
+ \| f \|_{L^\frac43_t L^2_x}.
\]
An important property of these spaces is
\begin{equation} \label{duall} 
S_{\leq 99}^\mp \subset (N_{\leq 99}^{\pm,at})^*
  \subset S_{\leq 99}^{\mp,w}.
\end{equation}
where $(N_{\leq 99}^{\pm,at})^*$ is the dual of $N_{\leq 99}^{\pm,at}$ 
and $S^{\pm,w}_{\leq 99}$ is endowed with the norm
\begin{equation}\label{eq:skweakl}
    \| f \|_{S^{\pm,w}_{\leq 99}}= \| f \|_{L^\infty_t L^2_x} + \| f \|_{X^{\pm,\frac12,\infty}}. 
\end{equation} 

Next let $k \geq 100$. For $l \leq k-10$ we consider $\ka \in
\mathcal{K}_l$ and define
\[
\| f \|_{N^\pm[k,\ka]} = \inf \Big\{ 2^{l} \sum_{(j,\kappa_1)} \|
f_{j,\ka_1} \|_{L^1_{t^\pm_{j,\ka_1}}L^2_{x^\pm_{j,\ka_1}}}\colon
f=\sum_{(j,\kappa_1)} f_{j,\ka_1} \Big\}
\]
where the infimum is taken over pairs $(j,\kappa_1)$ with $l \leq j
-10$ and $\ka_1 \in \mathcal{K}_l$ with $2^{-3} \leq 2^l
\dist(\ka_1,\ka) \leq 2^{3}$. Then we define the space for the
following atomic structure
\begin{equation}\label{eq:n-atom}
  \begin{split}
    \| f \|_{N_k^{\pm,at}}=& \inf_{f=f_1+f_2+\sum_{1 \leq l \leq k-10} g_{l} } \Big\{\|f_1 \|_{\dot{X}^{\pm,-\frac12,1}} + \| f_2 \|_{L^1_tL^2_x} \\
    {}&+ \sum_{1 \leq l \leq k-10} \Big( \sum_{\ka \in \mathcal{K}_l}
    \| P_{\ka} g_{l} \|_{N^\pm[k, \ka]}^2 \Big)^\frac12\Big\}
  \end{split}
\end{equation}
where the atoms $g_l$ in the above decomposition are assumed to be
localized at frequency $2^k$ and modulation $\ll 2^{k-2l}$, more
precisely that $\tilde{Q}_{\prec k-2l}^\pm \tilde{P}_k g_l = g_l$.

One important remark should be made about the third component in
$N_k^{\pm,at}$, i.e. the $\sum_{1 \leq l \leq k-10} g_{l}$, which we will henceforth call the cap-localized structure.  The atoms
$g_l$ are localized in frequency and modulation, while when they are
measured in $N^\pm[k,\ka]$ the components in the decomposition there
$g_l = \sum_{(j,\ka_1)} g_{l,j,\ka_1}$ are not assumed to keep that
localization. However, by applying the operator $\tilde{Q}_{\prec
  k-2l}^\pm \tilde{P}_{k,\ka}$ to the decomposition and using part i) in 
Lemma \ref{stable} below one obtains a new decomposition with similar
norm. From now on we assume that the decomposition above
comes with the correct frequency and modulation localization.

An important property of this construction is that
\begin{equation} \label{dual} S_k^\mp \subset (N_{k}^{\pm,at})^*
  \subset S_k^{\mp,w}
\end{equation}
where $(N_{k}^{\pm,at})^*$ is the dual of $N_{k}^{\pm,at}$ and
$S_k^{\pm,w}$ is endowed with the norm
\begin{equation}\label{eq:skweak}
    \| f \|_{S^{\pm,w}_k}= \| f \|_{L^\infty_t L^2_x} + \| f \|_{X^{\pm,\frac12,\infty}} 
    + \sup_{1 \leq l \leq k} \Big( \sum_{\ka \in \mathcal{K}_l} \|
    Q_{\prec k-2l}^\pm P_{\ka} f \|^2_{S^\pm[k;\ka]}\Big)^{\frac12}
\end{equation} 
and the embeddings are continuous, i.e.
\[ \|f\|_{S_k^{\mp,w}}\ls \|f\|_{(N^{\pm,at}_{k})^{*}}\ls
\|f\|_{S_k^\mp}.\]

For high frequencies, the space for dyadic pieces of the nonlinearity is
the following
\[
\| f \|_{N^\pm_k} = \| f \|_{N^{\pm,at}_k} + 2^{-\frac{k}4} \| f
\|_{L^\frac43_t L^2_x}.
\]
The space for the nonlinearity at regularity $H^\sigma$ is the
following
\[
\| f \|_{N^{\pm, \sigma}} = \| P_{\leq 99} f \|_{N_{\leq 99}^\pm} +
\Big(\sum_{k \geq 100} 2^{2k \sigma} \| P_k f
\|^2_{N^\pm_k}\Big)^{\frac12}.
\]

We now turn our attention to the relevance of the above structures for
the equations we study.  Our first result is of technical nature and
it says that certain frequency and modulation localization operators
preserve the structures involved above.

\begin{lem}\label{stable}
{\rm i)} For all $k \geq 100$, $1 \leq l \leq k$, 
$\kappa \in \mathcal{K}_l$, the operators $\tilde P_{k,\ka}$ and $\tilde Q_{\prec k-2l}^\pm \tilde
  P_{k,\kappa}$ have bounded kernel in $L^1_x$, respectively $L^1_{t,x}$. 
  As a consequence, they are uniformly bounded on all $L^pL^q$ in all frame choices.

{\rm ii)} For all $k,j \geq 100$, $1 \leq l \leq
  \min(j,k) -10$, $\kappa, \kappa_1 \in \mathcal{K}_l$ such that
  $2^{-3-l} \leq \dist(\kappa,\kappa_1) \leq 2^{3-l}$, the operators
  $ \tilde Q_m^\pm \tilde P_{k,\kappa}\text{ for }m
  \leq k-2l$ are bounded on the spaces
  $L^1_{t^\pm_{j,\ka_1}}L^2_{x^\pm_{j,\ka_1}}$. 
  
{\rm iii)} For all $k \geq 100, 1\leq l \leq k, \ka \in \mathcal{K}_l$, and functions $u$ localized at frequency $2^k$, we have
  \begin{equation} \label{sta} \| \left( \Pi_\pm(D) - \Pi_\pm(2^{k}
      \omega(\ka)) \right) P_{\ka} u \|_{S} \ls 2^{-l} \|
    P_{\ka} u \|_{S}
  \end{equation}
   for $S \in \{ S_k^\pm, S_k^{\pm,w} \}$.
\end{lem}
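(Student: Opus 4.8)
\emph{Overview.} The three parts increase in softness and I would prove them in order, the common engine being: a Fourier multiplier whose symbol is a smooth bump adapted to a box in frequency (or space-time frequency) has an $L^1$-normalised kernel, and convolution with such a kernel — more generally with any finite (matrix-valued) measure on $\R\times\R^3$ — is bounded, by its total variation, on every translation-invariant norm, hence on $L^p_{t'}L^q_{x'}$ in every orthonormal frame, by Minkowski's integral inequality.

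\emph{Part i).} After choosing an orthonormal frame adapted to $\om(\ka)$, the Fourier support $\tilde A_{k,\ka}$ of $\tilde P_{k,\ka}$ is comparable to a box of dimensions $2^k\times 2^{k-l}\times 2^{k-l}$, and by the bounds on $\eta_\ka,\tilde\eta_\ka$ recorded in Subsection \ref{subsect:not} and the bump bounds for $\chi_k,\rho$ the symbol $\tilde\eta_\ka\tilde\chi_k$ is a genuine bump adapted to it; repeated integration by parts gives $\|\mathcal F^{-1}(\tilde\eta_\ka\tilde\chi_k)\|_{L^1_x}\ls 1$. For $\tilde Q^\pm_{\prec k-2l}\tilde P_{k,\ka}$ one adjoins the modulation variable $\tau\mp\la\xi\ra$: since $\la\xi\ra$ deviates from $\xi\cdot\om(\ka)$ by only $O(2^{k-2l})$ over this cap, the space-time Fourier support lies, in suitable orthonormal coordinates on $\R_\tau\times\R^3_\xi$, in a box of dimensions $2^{k-2l}\times 2^k\times 2^{k-l}\times 2^{k-l}$, and using \eqref{eq:jap} for $\la\xi\ra$ one checks the symbol is a bump adapted to it, so its kernel has $L^1_{t,x}$-norm $\ls 1$. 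The uniform boundedness on all $L^pL^q$ in all frames is then the engine above.

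\emph{Part ii).} Set $(t',x')=(t^\pm_{j,\ka_1},x^\pm_{j,\ka_1})$. Taking the partial Fourier transform in $x'$ only, $\tilde Q^\pm_m\tilde P_{k,\ka}$ becomes convolution in $t'$ against $h_{\xi'}(t')=\int_\R e^{it'\tau'}\mathfrak m(\tau',\xi')\,d\tau'$, with $\mathfrak m$ the symbol re-expressed in the primed variables, so by Plancherel in $x'$ and Young's inequality the $L^1_{t'}L^2_{x'}$-operator norm is at most $\int_\R\sup_{\xi'}|h_{\xi'}(t')|\,dt'$. The point is that the $t'$-axis is the direction $\Theta_{\pm\lambda(j),\om(\ka_1)}$, and since $\dist(\om(\ka_1),\ka)=\alpha\approx 2^{-l}$ while $2^{-j},2^{-k}\ll 2^{-l}$, on the Fourier support one has $\pm\lambda(j)\mp\om(\ka_1)\cdot\tfrac{\xi}{\la\xi\ra}\approx\alpha^2$; hence $\partial_{\tau'}(\tau\mp\la\xi\ra)\approx\alpha^2$ and the cutoff $\tilde\chi_m(\tau\mp\la\xi\ra)$ confines $\tau'$ (for each fixed $\xi'$) to an interval of length $L\ls 2^m\alpha^{-2}\approx 2^{m+2l}$. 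The hypothesis $m\le k-2l$ makes $L\ls 2^k$, which is exactly what lets the remaining factors $\tilde\eta_\ka\tilde\chi_k$ — coherent on the $2^k$-scale — contribute $\ls L^{-1}$ per $\tau'$-derivative; a routine bookkeeping with the chain rule (higher $\tau'$-derivatives of $\tau\mp\la\xi\ra$ being $\ls\alpha^2 2^{-k(N-1)}$) then gives $|\partial_{\tau'}^N\mathfrak m(\cdot,\xi')|\ls_N L^{-N}$ uniformly in $\xi'$, so $|h_{\xi'}(t')|\ls_N L(1+L|t'|)^{-N}$ uniformly in $\xi'$, and $\int_\R L(1+L|t'|)^{-2}\,dt'\ls 1$.

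\emph{Part iii).} Since $u$ is localised at frequency $2^k$ and $P_\ka=\eta_\ka(D)$, write $(\Pi_\pm(D)-\Pi_\pm(2^k\om(\ka)))P_\ka u=2^{-l}M(D)u$ with $M(\xi)=2^l(\Pi_\pm(\xi)-\Pi_\pm(2^k\om(\ka)))\eta_\ka(\xi)\tilde\chi_k(\xi)$. From $\Pi_\pm(\xi)=\tfrac12[I\mp\la\xi\ra^{-1}(\xi\cdot\alpha+\beta)]$ and the facts that $\tfrac{\xi}{\la\xi\ra}=\om(\ka)+O(2^{-l})$ and $\la\xi\ra^{-1}=O(2^{-k})$ on $\tilde A_{k,\ka}$, the matrix $\Pi_\pm(\xi)-\Pi_\pm(2^k\om(\ka))$ has operator norm $\ls 2^{-l}$ there, while $\Pi_\pm$ is a smooth symbol with, on $|\xi|\approx 2^k$, transverse derivatives $\ls 2^{-k}$ and radial derivatives $\ls 2^{-2k}$ — so $M$ is a matrix-valued bump adapted to $\tilde A_{k,\ka}$ exactly as in Part i), and $M(D)=M(D)\tilde P_{k,\ka}$ has a matrix kernel which is a finite measure on $\R\times\R^3$ of total variation $\ls 1$. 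Crucially $M(D)$ is a Fourier multiplier in $\xi$ alone, so it commutes with every localisation operator entering the norms $S^\pm_k$ and $S^{\pm,w}_k$ (the caps $P_{\ka'}$, the modulations $Q^\pm_m$ and $Q^\pm_{\prec\cdot}$, and $\tilde P_k$), while each of those norms is a finite sum of mixed-Lebesgue norms of such localised pieces in various adapted frames. Applying the engine above norm by norm gives $\|M(D)u\|_S\ls\|u\|_S$, and multiplying by $2^{-l}$ yields \eqref{sta}.

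\emph{Main obstacle.} The only step with genuine content is Part ii): the geometric fact that in the \emph{matched} $\pm$ frame the characteristic derivative $\pm\lambda(j)\mp\om(\ka_1)\cdot\xi/\la\xi\ra$ has size $\alpha^2$ (rather than $\alpha$ or $1$), together with the bookkeeping of the scale conditions $l\le\min(j,k)-10$ and $m\le k-2l$ that keep $\mathfrak m(\cdot,\xi')$ a bona fide bump of $\tau'$-length $\ls 2^k$. Parts i) and iii) are then routine once the bump $\Rightarrow$ finite-measure-kernel mechanism is in hand; the one point to watch in iii) is that the gain $2^{-l}$ really requires the radial derivative bound $\ls 2^{-2k}$ on $\Pi_\pm$, without which $M$ would only be a bump on a smaller cube and one would lose a factor $2^{l}$.
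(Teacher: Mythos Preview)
Your proposal is correct and follows essentially the same approach as the paper: in each part you reduce to showing that the relevant symbol is a bump adapted to a box (in Part~ii, a box in $\tau'$ at fixed $\xi'$), so that the kernel is $L^1$-normalised and acts boundedly on every mixed-Lebesgue norm. The paper's argument is terser---in Part~iii it simply asserts that the argument of Part~i applies to $(\Pi_\pm(D)-\Pi_\pm(2^k\om(\ka)))P_{k,\ka}$ and its modulation-localised variant ``up to picking a factor of $2^{-l}$''---while you supply the explicit symbol calculus, including the observation that the radial derivative of $\Pi_\pm$ is $O(2^{-2k})$ rather than merely $O(2^{-k})$, which is indeed what makes $M$ a genuine bump on the full $2^k\times 2^{k-l}\times 2^{k-l}$ box.
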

\begin{proof} i) The kernel of the operator $\tilde P_{k,\ka}$ is given by 
$\mathcal{F}_x^{-1} (\tilde \eta_\ka \tilde \chi_k)$ and it is a straightforward 
exercise to prove that it belongs to $L^1_x$. Since 
\[
\tilde P_{k,\ka} u = \mathcal{F}_x^{-1} (\tilde \eta_\ka \tilde \chi_k) \ast_x u
\]
the boundedness of $\tilde P_{k,\ka}$ on all $L^pL^q$ spaces follows from the boundedness
of its kernel in $L^1_x$. 

Next, we prove the statement for the operator $\tilde Q_{\prec k-2l}^+ \tilde
  P_{k,\kappa}$. With $a_{l,k,\ka}(\tau,\xi)= \tilde \chi_{\leq k-2l}(\tau - \la \xi
    \ra) \tilde \eta_\ka \tilde \chi_k$ and $R=\mathcal{F}^{-1} (a_{l,k,\ka})$ we have
  \[
  Q^+_{\prec k-2l} P_{\ka} u = R \ast Q^+_{\prec k-2l} P_k u.
  \]
 Since $a$ is a smooth approximation of the characteristic function of a rectangular parallelepiped 
 (of sizes $2^{k} \times 2^{k-2l} \times 2^{k-l} \times 2^{k-l}$ in the direction of 
 $(\tau_{k,\ka},\xi^1_{k,\ka},\xi^2_{k,\ka},\xi^3_{k,\ka})$), it is a straightforward exercice  
 to prove that $\| R \|_{L^{1}_{t,x}} \ls 1$. The boundedness statement follows from the above.

ii)  We give the proof for the operator $\tilde Q_{m}^+ \tilde P_{k,\kappa}$, which is a Fourier
  multiplier whose symbol $a_{m,k,\ka}(\tau,\xi)=\tilde \chi_{m} (\tau - \la \xi \ra) \tilde \chi_k (\xi) \tilde
  \eta_\ka(\xi)$ satisfies
  \[
  |\partial_{\tau_{j,\ka_1}}^\beta a_{m,k,\ka}| \ls
  (2^{m+2l})^{-\beta}.
  \]
  The inverse Fourier transform of $a_{m,k,\ka}$ with respect to
  $\tau_{j,\ka_1}$ satisfies
  \[
  |K_{l,k,\ka}(t_{j,\ka_1},\xi_{j,\ka_1})| \ls_N
  2^{m+2l}(1+|t_{j,\ka_1}|2^{m+2l})^{-N}, \text{ for any }N \in \N.
  \]
  From this we obtain the uniform bound
  \[
  \| K_{l,k,\ka} \|_{L^1_{t_{j,\ka_1}}L^\infty_{\xi_{j,\ka_1}}} \ls 1.
  \]
  On the other hand we have
  \[
  \mathcal{F}_{\xi_{j,\ka_1}} ( \tilde Q^+_{m} \tilde P_{k,\ka}
  f) = K_{l,k,\ka} *_{t_{j,\ka_1}}
  \mathcal{F}_{\xi_{j,\ka_1}} f,
  \]
  where one performs convolution with respect to $t_{j,\ka_1}$
  variable only. From the last two statements, the conclusion follows.
 
iii)  We prove the statement for the $+$ choice above and $S=S_k^+$, 
the proof for the other choices being similar. A similar argument to the one used in
i) shows that the operators $\left( \Pi_+(D) - \Pi_+(2^{k} \omega(\ka)) \right) P_{k,\ka}$
and $\left( \Pi_+(D) - \Pi_+(2^{k} \omega(\ka)) \right) Q^+_{\prec k-2l} P_{k,\ka}$ are, up to picking a factor of $2^{-l}$, uniformly bounded on each component.
\end{proof}

The main result of this section is the following Proposition.
\begin{pro} \label{linl} For all $g\in N_k^\pm$ and initial data $u_0\in L^2(\R^3)$, both localized at (spatial) frequency $2^k$, $k \geq
  100$, the solution $u$ of
  \begin{equation} \label{ng} (i \partial_t \pm \la D \ra) u = g,
    \quad u(0)=u_0,
  \end{equation}
  belongs to $S^\pm_k$ and the following estimate holds true:
  \begin{equation} \label{mlin} \| u \|_{S_k^\pm} \ls \| g
    \|_{N_k^\pm} + \| u_0 \|_{L^2}.
  \end{equation}
\end{pro}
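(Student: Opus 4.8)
\emph{Strategy.} The plan is to solve \eqref{ng} by Duhamel's formula, writing $u=u_{\mathrm{hom}}+u_{\mathrm{inh}}$ with $u_{\mathrm{hom}}(t)=e^{it\la D\ra}u_0$ the free evolution and $u_{\mathrm{inh}}$ the zero-data solution driven by $g$, and to estimate each of the semi-norms constituting $\|\cdot\|_{S_k^\pm}$ in \eqref{eq:sk} separately; the case of the $-$ sign follows from the $+$ case by reversing time, so we only discuss $e^{it\la D\ra}$. The guiding principle is that each component of $S_k^\pm$ is either ``$L^p$ in some space-time direction of an $L^2_x$-norm'' — for which Minkowski's inequality and restriction to half-spaces are harmless — or a modulation-weighted quantity controlled by the classical $X^{s,b}$ calculus; the geometric content sits entirely in the free-wave estimates of Theorems \ref{thm:estr} and \ref{thm:Energy}.

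\emph{The homogeneous part.} We first prove $\|e^{it\la D\ra}u_0\|_{S_k^\pm}\ls\|u_0\|_{L^2}$ for any $u_0$ localized at frequency $2^k$. The $\dot{X}^{\pm,\frac12,\infty}$ and $L^{4/3}$-modulation semi-norms vanish on a free wave, and $\|u_{\mathrm{hom}}\|_{L^\infty_tL^2_x}=\|u_0\|_{L^2}$. For the finest-scale Strichartz term one sums \eqref{eq:estr} over $\ka\in\mathcal{K}_k$ and uses the almost-orthogonality of $\{P_\ka u_0\}$ in $L^2$. For the multiscale term, $Q_{\prec k-2l}^\pm$ is the identity on a free wave, so $\|P_\ka u_{\mathrm{hom}}\|_{S^\pm[k,\ka]}$ is bounded by $\|P_\ka u_0\|_{L^2}$: the $L^\infty_tL^2_x$-part trivially, the frame-energy part by \eqref{DH} with $\al=\dist(\om(\ka_1),\ka)\approx2^{-l}$, uniformly in $j\ge l+10$ and in the admissible $\ka_1$; square-summing over $\ka\in\mathcal{K}_l$ and taking the supremum in $l$ again gives $\|u_0\|_{L^2}$ (minor mismatches between $A_{k,\ka}$ and $\tilde A_{k,\ka}$ being absorbed via Lemma \ref{stable}(i)).

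\emph{The inhomogeneous part: non-geometric atoms.} We split $g$ according to $N_k^\pm=N_k^{\pm,at}+2^{-k/4}L^{4/3}_tL^2_x$ and, inside $N_k^{\pm,at}$, as in \eqref{eq:n-atom}, into a $\dot{X}^{\pm,-\frac12,1}$-atom $g_1$, an $L^1_tL^2_x$-atom $g_2$, and cap-localized atoms $g_l$, $1\le l\le k-10$. The $2^{-k/4}L^{4/3}_tL^2_x$ piece has only to dominate the matching $2^{-k/4}$-weighted $L^{4/3}$-modulation semi-norm of $S_k^\pm$, which follows from $2^mQ_m^\pm u\approx Q_m^\pm g$ on the Fourier side and the uniform boundedness of $Q_m^\pm$ on $L^p_tL^2_x$. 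For $g_1$, the $\|\cdot\|_{L^\infty_tL^2_x}$, $\|\cdot\|_{\dot{X}^{\pm,\frac12,\infty}}$ and $L^{4/3}$-modulation contributions are the standard inhomogeneous $X^{s,b}$ estimates; for the Strichartz and frame semi-norms we split $g_1$ into modulation pieces $Q_m^\pm g_1$ and write the Duhamel solution driven by $Q_m^\pm g_1$ as a superposition over $|\sigma|\approx 2^m$ of modulated free waves $e^{it\sigma}e^{it\la D\ra}(\cdot)$ whose $L^2$-masses integrate to $\ls2^{-m/2}\|Q_m^\pm g_1\|_{L^2_{t,x}}$, apply the homogeneous bound to each (the phase $e^{it\sigma}$ being inert for these semi-norms, and $\tilde Q^\pm_{\prec k-2l}\tilde P_{k,\ka}$ controlled by Lemma \ref{stable}), and sum in $m$ against the $\ell^1$-weight of $\dot{X}^{\pm,-\frac12,1}$. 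For $g_2$, Minkowski's inequality in the Duhamel variable $s$, together with the fact that multiplying a function by the characteristic function of the half-space $\{t>s\}$ does not increase any ``$L^pL^q$-type'' semi-norm of $S_k^\pm$, reduces everything via the homogeneous bound to $\int\|g_2(s)\|_{L^2}\,ds=\|g_2\|_{L^1_tL^2_x}$; the $\dot X$/$L^{4/3}$-modulation parts are again standard truncated-free-wave estimates.

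\emph{The inhomogeneous part: cap-localized atoms and the main difficulty.} Fix $l\in[1,k-10]$ and $\ka\in\mathcal{K}_l$. By \eqref{eq:n-atom} and the remark following it (Lemma \ref{stable}(i)) we may write $P_\ka g_l=\sum_{(j,\ka_1)}h_{j,\ka_1}$ with each $h_{j,\ka_1}$ Fourier-supported in $\tilde B^\pm_{k,\ka}$, $\ka_1\in\mathcal{K}_l$, $l\le j-10$, $\dist(\ka,\ka_1)\approx2^{-l}$, and $2^l\sum_{(j,\ka_1)}\|h_{j,\ka_1}\|_{L^1_{t^\pm_{j,\ka_1}}L^2_{x^\pm_{j,\ka_1}}}\ls\|P_\ka g_l\|_{N^\pm[k,\ka]}$. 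For each $h_{j,\ka_1}$ we apply Theorem \ref{thm:Energy}(ii)--(iii) in the frame $(t^\pm_{j,\ka_1},x^\pm_{j,\ka_1})$, whose direction is $\Theta_{\pm\lam(j),\om(\ka_1)}$ and for which $\al=\dist(\om(\ka_1),\ka)\approx2^{-l}$: this represents the corresponding zero-data Duhamel solution as $e^{it\la D\ra}\tilde v_0+\int e^{it\la D\ra}v_s\,\chi_{t^\pm_{j,\ka_1}>s}\,ds$, with $\hat{\tilde v}_0,\hat v_s$ supported in $\tilde A_{k,\ka}$ and $\|\tilde v_0\|_{L^2}+\int\|v_s\|_{L^2}\,ds\ls2^l\|h_{j,\ka_1}\|_{L^1_{t^\pm_{j,\ka_1}}L^2_{x^\pm_{j,\ka_1}}}$. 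Part (ii) alone would control only one frame, whereas $S_k^\pm$ demands control in all frames, in all Strichartz pieces, and in $X^{s,b}$ at once; part (iii) delivers this by writing $u_{\mathrm{inh}}$ (for this atom) as honest free waves plus truncations along the characteristic hyperplanes $\{t^\pm_{j,\ka_1}=s\}$. The free-wave part is handled by the homogeneous bound, and the truncated part by Minkowski in $s$ and the non-increase of the $L^pL^q$-type semi-norms under half-space restriction ($Q^\pm_{\prec k-2l}$ again being the identity on free waves), except for the $\dot X$ and $L^{4/3}$-modulation semi-norms, where one uses that the hyperplane $\{t^\pm_{j,\ka_1}=s\}$ is almost tangent to the characteristic surface at frequency $2^k$ inside $\Gamma_\ka$ — this is exactly where the frame parameter $\lam(j)$, rather than the null value $\lam=1$, is essential — so the modulation spreads only mildly. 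Summing the resulting $\ls\|P_\ka g_l\|_{N^\pm[k,\ka]}$ over $\ka\in\mathcal{K}_l$ with $\ell^2$-orthogonality, then over $l$, recovers $\|g_l\|_{N_k^{\pm,at}}$ and, combined with the previous estimates, establishes \eqref{mlin}. The main obstacle is precisely this bookkeeping: reducing the cap-localized atoms to free waves so that all the semi-norms of $S_k^\pm$ are simultaneously recovered with the correct powers of $2^l$, while keeping the modulation under control after truncating free waves along the characteristic hyperplanes.
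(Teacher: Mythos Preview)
Your approach is essentially the same as the paper's: split $u$ into homogeneous and inhomogeneous parts, handle $u_0$ and the atom types of $N_k^{\pm,at}$ separately via Theorems \ref{thm:estr} and \ref{thm:Energy}, and treat the $L^{4/3}_tL^2_x$ component on its own. Two points are worth flagging.

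First, the $L^{4/3}$-modulation part of $S_k^\pm$ never needs to be recovered from the $N_k^{\pm,at}$ atoms (in particular not from the cap atoms): by design, it is controlled solely by the $2^{-k/4}L^{4/3}_tL^2_x$ component of $N_k^\pm$. The paper makes this explicit by splitting the proof into Part~1 (all of $S_k^\pm$ except the $L^{4/3}$ piece, from $N_k^{\pm,at}$) and Part~2 (the $L^{4/3}$ piece, from $2^{-k/4}L^{4/3}_tL^2_x$). So your remark that the $L^{4/3}$ norm for cap atoms is handled via the ``almost tangent'' geometry is superfluous.

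Second, and more substantive, for the $\dot X^{\pm,\frac12,\infty}$ norm of cap-localized atoms the paper does \emph{not} go through the truncated-free-wave representation of Theorem \ref{thm:Energy}~(iii). Instead it bounds $\|Q_m^\pm P_{k,\ka} g_l\|_{L^2}$ directly: since the set $\{|\tau-\la\xi\ra|\approx 2^m\}\cap \tilde B_{k,\ka}$ has size $\approx 2^{m+2l}$ in the $\tau_{j,\ka_1}$-direction, one gets $\|Q_m^\pm g_{j,\ka_1}\|_{L^2}\ls 2^{(m+2l)/2}\|g_{j,\ka_1}\|_{L^1_{t_{j,\ka_1}}L^2_{x_{j,\ka_1}}}$ via Lemma~\ref{stable}~(ii), and then $\hat u=\hat g/(\tau-\la\xi\ra)$ converts this into the desired bound on $u$. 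Your ``almost tangent'' heuristic points at the same geometry --- on $\tilde B_{k,\ka}$ one has $|\tau-\la\xi\ra|\approx 2^{-2l}|\tau_{\lambda,\omega}-h^-(\xi_{\lambda,\omega})|$ --- and your approach via truncated free waves can be made to work using this relation, but as written it is a sketch; the paper's direct argument on $g$ is cleaner and avoids analyzing the modulation spread of $\chi_{t_{j,\ka_1}>s}\,e^{it\la D\ra}v_s$. Also, your parenthetical ``$Q^\pm_{\prec k-2l}$ being the identity on free waves'' is misleading once the free wave has been truncated; what you actually need (and invoke elsewhere) is that $\tilde Q^\pm_{\prec k-2l}\tilde P_{k,\ka}$ has $L^1_{t,x}$-bounded kernel, Lemma~\ref{stable}~(i).
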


\begin{proof}
  To simplify the exposition we write the argument for the $+$ choice
  above.  The argument is organized as follows. In Part 1 we consider $g \in
  N^{+,at}_k$ and we derive all the properties in $S_k$ for $u$, except
  the $L^\frac43_t L^2_x$ structure. Since the $N^{+,at}_k$ contains
  three type of atoms, we split the argument in three cases. In Part 2,
  we prove that if $g \in 2^{\frac{k}{4}} L^\frac43_t L^2_x$, then we
  obtain the similar structure for $Q_m^+ u$.

Further, since all estimates in $S_k^+$ were provided for homogeneous solutions in Section \ref{sect:le},
it suffices to provide the argument for $u_0=0$. We note that the homogenous solutions belong to
 the kernel of the operators $Q_m^+$, hence the $\dot X^{+,\frac12,\infty}$ and $L^\frac43_t L^2_x$
 components are vacuous for them.  

\underline{Part 1) $g \in N^{+,at}_k$.}
{\it Case a) $g \in L^1_t L^2_x$.} The solution is given by
  \[
  u(t) = - e^{ i t\la D \ra} \int_{-\infty}^0 e^{-
    i s \la D \ra} g(s) ds + \int e^{i(t -s)\la D \ra} g(s) \chi_{t >
    s} ds.
  \]
  Hence $u$ is a superposition of homogeneous solutions with $L^2$
  data which are truncated across hyperplanes $t > s$. The $L^\infty_tL^2_x$ bound is obvious. Theorem \ref{thm:estr} and Theorem \ref{thm:Energy} i) imply the end-point Strichartz and energy estimates. The estimate in
  $\dot{X}^{+,\frac12,\infty}$ is proved as follows. Inserting the
  modulation operator $Q_m^+$ into the equation we obtain
  \[
  (i \partial_t + \la D \ra) Q_m^+ u = Q_m^+ g.
  \]
Let $D_t=i\partial_t$. Then,
\[
Q_m^+=e^{it\la D\ra}\chi_m(D_t)e^{-it\la D\ra}
\]
which yields
  \begin{equation}\label{eq:q-sol}
  D_t \chi_m(D_t) e^{-it\la D\ra}u = \chi_m(D_t)e^{-it\la D\ra}g.
 \end{equation}
Now, the kernel of $D_t^{-1}\chi_m(D_t)$ satisfies
\begin{equation}\label{eq:q-ker}
 \Big\|\F_t^{-1}\big(\frac{\chi_m}{\tau}\big)\Big\|_{L^q(\R)}\ls 2^{-\frac{m}{q}}
 \end{equation}
for all $1\leq q \leq \infty$, hence
  \begin{align*}
 & \|Q_m^+ u\|_{L^2}=\|D_t^{-1}\chi_m(D_t)e^{-it\la D\ra}g\|_{L^2} \\
\ls{}&
\Big\|\F_t^{-1}(\frac{\chi_m}{\tau})\Big\|_{L^2_t}\|e^{-it\la D\ra}g\|_{L^1_t L^2_x}
\ls{} 2^{-\frac{m}{2}} \|g \|_{L^1_t L^2_x}.
\end{align*}

{\it Case b) $g \in \dot{X}^{+,-\frac12,1}$.}  Let $v$ defined by 
$\hat v=\frac{\hat g}{\tau-\la \xi \ra}$.
As defined now, $v$ may not even be a distribution. 
Using $g \in 2^{\frac{k}{4}}L^{4/3}_tL^2_x$ and the frequency localization of $g$, it follows
from a Sobolev embedding that $ g \in L^2$. Thus $g=\sum_{m \in \Z}
Q_m^+ g$, and it follows further that 
$\hat v=\sum_{m \in \Z} \frac{\chi_m(\tau-\la \xi \ra) \hat g}{\tau-\la \xi \ra}$.

Then, by \eqref{eq:q-ker} with $q=1$,
\begin{align*}
 & \sum_{m\in \Z} 2^{\frac{m}{2}}\|Q_m^+ v\|_{L^2}=\sum_{m\in Z} 2^{\frac{m}{2}}\|D_t^{-1}\chi_m(D_t)e^{-it\la D\ra}g\|_{L^2} \\
\ls{}&\sum_{m\in \Z}2^{\frac{m}{2}}
\Big\|\F_t^{-1}(\frac{\chi_m}{\tau})\Big\|_{L^1_t}\|(\chi_{m-1}(D_t)+\chi_{m}(D_t)+\chi_{m+1}(D_t))e^{-it\la D\ra}g\|_{L^2}\\
\ls{}& \sum_{m\in \Z} 2^{-\frac{m}{2}} \|Q_m^+ g \|_{L^2},
\end{align*}
hence $v \in \dot{X}^{+,\frac12,1}$ and $\|v\|_{L^\infty_t L^2_x}\ls \|v\|_{\dot{X}^{+,\frac12,1}}\ls
\|g\|_{\dot{X}^{+,-\frac12,1}} $; in particular we upgraded $v$ to a tempered distribution. Further, $v$ can be written as
\[
v=\sum_{m \in \Z} \int e^{it\tau} e^{it\la D\ra}\tilde{v}_m(\tau)d\tau, \, \text{ where } \tilde{v}_m=\F_t(e^{-it\la D\ra}g)\frac{\chi_m}{\tau},
\]
i.e.\ as a superposition of modulated homogeneous solutions.
Due to the estimate
\[
\sum_{m \in \Z} \int \|\tilde{v}_m(\tau)\|_{L^2_x} d\tau\ls \sum_{m \in \Z} 2^{-\frac{m}{2}}\|Q_m^+ g\|_{L^2}=\|g\|_{\dot{X}^{+,-\frac12,1}}
\] the end-point Strichartz and energy estimates for $v$ follow from Theorem \ref{thm:estr} and Theorem \ref{thm:Energy} i). The only problem is that while $v$ satisfies the inhomogeneous equation \eqref{ng},
it does not have to satisfy the initial condition. On the other hand 
\[
u= v - e^{it\la D \ra} v(0)
\]
becomes a solution to \eqref{ng} (with $u_0=0$) and since $\| v(0) \|_{L^2_x} \ls \| g \|_{\dot X^{+,-\frac12,1}}$, 
\eqref{mlin} follows in this case.

{\it Case c) $g$ belongs to the cap-localized structure.}
  Given the $l^1$ structure in the $l$ parameter, it suffices to
  establish the estimates for fixed $l$. For each $\ka \in
  \mathcal{K}_l$ we have the decomposition
  \begin{equation} \label{decnon}
  P_{\ka} g_l = \sum_{(j,\kappa_1)} g_{j,\ka_1}
  \end{equation}
  where we recall that we can choose $g_{j,\ka_1}$ such that $\tilde
  Q_{\prec k-2l}^+ \tilde P_{k,\ka} g_{j,\ka_1} = g_{j,\ka_1}$. Using
  part iii) of Theorem \ref{thm:Energy} with $g_{j,\ka_1}$ as forcing,
  we obtain that the solution generated satisfies
  \[
  \| u_{j,\ka_1} \|_{S^+[k,\ka]} \ls \| g_{j,\ka_1}
  \|_{L^1_{t_{j,\ka_1}}L^2_{x_{j,\ka_1}}}
  \]
  and has Fourier support in the set $\tilde B_{k,\ka}$. If $u_{\ka}$
  is the solution of the equation with forcing $P_\ka g_l$, then by
  adding all the components in the decomposition of $g_l$ gives the
  following estimate
  \[
  \| u_\ka \|_{S^+[k,\ka]} \ls \sum_{(j,\ka_1)} \| g_{j,\ka_1}
  \|_{L^1_{t_{j,\ka_1}}L^2_{x_{j,\ka_1}}}
  \]
  and that $u_\ka$ has Fourier support in the set $\tilde
  B_{k,\ka}$. In the last step we need to perform the summation with
  respect to $\ka \in \mathcal{K}_l$. Given that each $u_\ka$ is
  supported in $\tilde B_{k,\ka}$, the $L^\infty_t L^2_x$ and the
  end-point Strichartz estimate follow. Concerning the cap-localized
  structure, it is easy to see that one obtains the $S^+[k,\ka']$
  structures with $\ka' \in \mathcal{K}_{l'}$ with $l' \geq l$. For
  the case when $l' \leq l$, one splits
  \[
  P_{\ka'} u = \sum_{\ka \in \mathcal{K}_l} \tilde P_{\ka} P_{\ka'} u
  \]
  and uses the almost orthogonality of $P_{\ka'} u_\ka$, $\ka \in
  \mathcal{K}_l$ with respect to $\xi_{j,\ka_1}$ to obtain
  \[
  \| P_{\ka'} u \|^2_{L^\infty_{t_{j,\ka_1}} L^2_{x_{j,\ka_1}}} \ls
  \sum_{\ka \in \mathcal{K}_l} \| P_{\ka'} u_\ka
  \|^2_{L^\infty_{t_{j,\ka_1}} L^2_{x_{j,\ka_1}}}.
  \]
  We now prove that $u \in \dot{X}^{+,\frac12,\infty}$. We start from the decomposition
  \eqref{decnon}. From this we obtain
  \[
  \begin{split}
    \| Q_m^+ g_{j,\ka_1} \|_{L^2_{t,x}} = \| \mathcal{F} (Q_m^+
    g_{j,\ka_1}) \|_{L^2_{\tau_{j,\ka_1},\xi_{j,\ka_1}}}
    & \ls 2^{\frac{m+2l}2} \| \mathcal{F} (Q_m^+ g_{j,\ka_1}) \|_{L^2_{\xi_{j,\ka_1}}L^\infty_{\tau_{j,\ka_1}}} \\
    & \ls 2^{\frac{m+2l}2} \| Q_{m}^+ g_{j,\ka_1} \|_{L^1_{t_{j,\ka_1}}L^2_{x_{j,\ka_1}}} \\
    & \ls 2^{\frac{m+2l}2} \| g_{j,\ka_1}
    \|_{L^1_{t_{j,\ka_1}}L^2_{x_{j,\ka_1}}}.
  \end{split}
  \]
  In the above we have used that the size of the support of Fourier
  transform of $Q_{m}^+ g_{j,\ka_1}$ in the direction of
  $\tau_{j,\ka_1}$ is $\approx 2^{m+2l}$ and part ii) of Lemma \ref{stable}. We sum the above estimates
  with respect to $(j,\ka_1)$ to obtain
  \[
  \| Q_{m}^+ P_{k,\ka} g_l \|_{L^2} \ls 2^{\frac{m}2} \| Q_{\prec
    k-2l}^+ P_{k,\ka} g_l \|_{N[k,\ka]}.
  \]
  Finally, we sum the above with respect to $\ka \in \mathcal{K}_l$ to
  conclude with
  \[
  2^{-\frac{m}2} \| Q_{m}^+ g_l \|_{L^2} \ls \left( \sum_{\ka \in
      \mathcal{K}_l} \| Q_{\prec k-2l}^+ P_{k,\ka} g_l \|^2_{L^2}
  \right)^\frac12.
  \]
  Since this is uniform with respect to $m \leq 2k-l$ we obtain that $g \in \dot X^{+,-\frac12,\infty}$. 
  Since $\mathcal{F}( Q_m^+ u) = \frac{1}{\tau-\la \xi \ra} \mathcal{F}( Q_m^+ g)$, the estimate for $u$
  in  $\dot{X}^{+,\frac12,\infty}$ follows.

  \underline{Part 2) $g$ belongs to $2^{\frac{k}{4}} L^\frac43_t L^2_x$.}
From \eqref{eq:q-sol} it follows that
  \begin{align*}
 & \|Q_m^+ u\|_{L^{4/3}_t L^2_x}=\|D_t^{-1}\chi_m(D_t)e^{-it\la D\ra}g\|_{L^{4/3}_t L^2_x} \\
\ls{}&
\Big\|\F_t^{-1}(\frac{\chi_m}{\tau})\Big\|_{L^1_t}\|e^{-it\la D\ra}g\|_{L^{4/3}_t L^2_x}
\ls{} 2^{-m} \|g \|_{L^{4/3}_t L^2_x},
\end{align*}
where we used \eqref{eq:q-ker} with $q=1$, and this finishes our proof.
\end{proof}

\begin{cor} \label{corlin} For all $u_0 \in H^\sigma(\R^3)$ and $g \in N^{\pm,\sigma}$, there exists a unique solution $u\in S^{\pm,\sigma}$ of
  \eqref{ng}, and the following estimate holds true
  \begin{equation} \label{mlins} \| u \|_{S^{\pm,\sigma}} \ls \| g
    \|_{N^{\pm,\sigma}} + \| u_0 \|_{H^\sigma}.
  \end{equation}
\end{cor}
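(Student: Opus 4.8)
The plan is to deduce Corollary \ref{corlin} from the dyadic estimate of Proposition \ref{linl}, an (easier) low-frequency counterpart of it, and a Littlewood--Paley summation; no substantial new difficulty appears, since all the analytic content already sits in Section \ref{sect:le} and in Proposition \ref{linl}. First I would use that the spatial Fourier multipliers $P_{\leq 99}$ and $P_k$ ($k\geq 100$) commute with $i\partial_t\pm\la D\ra$ and that $\chi_{\leq 99}+\sum_{k\geq 100}\chi_k\equiv 1$, so that the candidate solution is
\[
u=u_{\mathrm{low}}+\sum_{k\geq 100}u_k,
\]
where $u_{\mathrm{low}}$ solves \eqref{ng} with data $(P_{\leq 99}g,P_{\leq 99}u_0)$ and, for each $k\geq 100$, $u_k$ solves \eqref{ng} with data $(P_kg,P_ku_0)$. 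Each $u_k$ is then localized at spatial frequency $2^k$, so Proposition \ref{linl} applies and yields $\|u_k\|_{S_k^\pm}\ls\|P_kg\|_{N_k^\pm}+\|P_ku_0\|_{L^2}$.

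Next I would supply the low-frequency estimate $\|u_{\mathrm{low}}\|_{S_{\leq 99}^\pm}\ls\|P_{\leq 99}g\|_{N_{\leq 99}^\pm}+\|P_{\leq 99}u_0\|_{L^2}$, which is not literally contained in Proposition \ref{linl} (stated there for $k\geq 100$) but which follows from the same scheme, in fact more easily, since no adapted frames enter in this regime: the $L^\infty_tL^2_x$ and energy bounds are standard, the end-point $L^2_tL^\infty_x$ bound is \eqref{eq:lf}, the $\dot{X}^{\pm,\frac12,\infty}$ component is treated exactly as in Cases a) and b) of Part 1 of the proof of Proposition \ref{linl} via \eqref{eq:q-sol}--\eqref{eq:q-ker}, and the $L^{4/3}_tL^2_x$ component as in Part 2; note that $N^{\pm,at}_{\leq 99}$ involves only the two atom types $\dot{X}^{\pm,-\frac12,1}$ and $L^1_tL^2_x$, handled verbatim as Cases b) and a).

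Finally I would carry out the $\ell^2$-summation in $k$. Using the Littlewood--Paley characterisation $\|u_0\|_{H^\sigma}^2\approx\|P_{\leq 99}u_0\|_{L^2}^2+\sum_{k\geq 100}2^{2k\sigma}\|P_ku_0\|_{L^2}^2$, the analogous identities built into the norms $\|\cdot\|_{S^{\pm,\sigma}}$ and $\|\cdot\|_{N^{\pm,\sigma}}$, the finite overlap of the frequency blocks $\chi_k$ (so that $P_{\leq 99}u$ and each $P_ku$ differ from $u_{\mathrm{low}}$, resp.\ $u_k$, only by finitely many neighbouring pieces, which are harmless since the relevant multipliers have $L^1$ kernels, cf.\ part i) of Lemma \ref{stable}), and the inequality $(a+b)^2\leq 2a^2+2b^2$, one arrives at
\[
\|u\|_{S^{\pm,\sigma}}^2\ls\|u_{\mathrm{low}}\|_{S_{\leq 99}^\pm}^2+\sum_{k\geq 100}2^{2k\sigma}\|u_k\|_{S_k^\pm}^2\ls\|g\|_{N^{\pm,\sigma}}^2+\|u_0\|_{H^\sigma}^2,
\]
which is \eqref{mlins}. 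The same estimate applied to partial sums shows that the series defining $u$ converges in $S^{\pm,\sigma}$; as $S^{\pm,\sigma}$ is by definition a closed subspace of $C(\R,H^\sigma(\R^3))$, the limit $u$ is a continuous $H^\sigma$-valued curve, and passing to the limit in $\mathcal{S}'$ confirms $(i\partial_t\pm\la D\ra)u=g$ with $u(0)=u_0$. Uniqueness is immediate: the difference of two solutions in $S^{\pm,\sigma}\subset C(\R,H^\sigma)$ is a homogeneous solution with vanishing initial data, hence zero (take the spatial Fourier transform). I do not expect a genuine obstacle in this corollary; the only two points needing a line of justification are the low-frequency variant of Proposition \ref{linl} and the convergence of the dyadic series.
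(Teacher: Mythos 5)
Your proposal is correct and follows essentially the same route as the paper's own (very terse) proof: Littlewood--Paley decomposition, dyadic estimate from Proposition \ref{linl} for the high-frequency pieces, an analogous but simpler argument for the low-frequency piece (with the $L^{4/3}_tL^2_x$ component handled as in Part 2 of the proof of Proposition \ref{linl}), and $\ell^2$-summation in $k$. The only minor inaccuracy is your appeal to ``finite overlap'' when comparing $P_ku$ with $u_k$: since $P_k$ commutes with $i\partial_t\pm\la D\ra$ and solutions with $L^2$ data are unique, one has $P_ku=u_k$ and $P_{\leq 99}u=u_{\mathrm{low}}$ exactly, so that step is not even needed.
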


\begin{proof} The claim follows from its dyadic versions for high frequencies ($k \geq 100$), which is precisely Proposition \ref{linl}. The low frequency part is standard, except the $L^\frac43_t L^2_x$ part which is established 
as in Part 2) above.  Alternatively it is an easy exercise to work out the whole argument following the same steps as for the high frequency case.
\end{proof}

\section{Bilinear estimates}\label{sect:bil-est}
In this section we derive the main bilinear $L^2_{t,x}$-type estimate
for functions in our spaces.  As a convention, throughout the rest of
the paper $u$'s will denote complex scalars, $u: \R \times \R^3
\rightarrow \C$, while $\psi$'s will denote complex vectors $\psi : \R
\times \R^3 \rightarrow \C^4$. To make the exposition simpler we will abuse
notation and set $S_{99}^\pm:=S^\pm_{\leq 99}$. 

The main result of this section is the following

\begin{pro}\label{Pbil}
{\rm i)} For all $k_1,k_2 \geq 99$ and $\psi_1 \in
    S^\pm_{k_1}$, $\psi_2 \in
    S^{\pm,w}_{k_2}$, where $\psi_j$ localized at frequency $2^{k_j}$ for $j=1,2$, the
    following holds true:
    \begin{equation} \label{bil2} \big\| \la \Pi_\pm(D) \psi_1 , \beta
      \Pi_\pm(D) \psi_2 \ra \big\|_{L^2} \ls 2^{k_1} \| \psi_1
      \|_{S_{k_1}^\pm} \| \psi_2 \|_{S_{k_2}^{\pm,w}},
    \end{equation}

{\rm ii)} If in addition $l \leq \min(k_1,k_2)$, then
    \begin{equation} \label{bil3}
      \begin{split}
        & \Bigg\| \sum_{\ka_1,\ka_2 \in \mathcal{K}_{l}: \atop
          \dist(\pm\ka_1,\pm\ka_2)
          \ls 2^{-l}} \la \Pi_\pm(D) \tilde P_{\ka_1} \psi_1, \beta \Pi_\pm(D) \tilde P_{\ka_2}  \psi_2 \ra \Bigg\|_{L^2} \\
        \ls{}& 2^{k_1-l} \| \psi_1 \|_{S_{k_1}^\pm} \| \psi_2
        \|_{S_{k_2}^{\pm,w}}.
      \end{split}
    \end{equation}
    In both of the above estimates the sign of each $\Pi_\pm$ and $\pm
    \ka_j$ is
    chosen to be consistent with the one of the corresponding
    $S^\pm_{k_j}$.

 {\rm iii)} Let $2 < q \leq \infty$. For all $100 \leq k_1
    \leq k_2$ and $u_1 \in S^+_{k_1}$, $u_2 \in S^{+,w}_{k_2}$, each localized at frequency
    $2^{k_1}$ resp.\ $2^{k_2}$,  the following holds true:
    \begin{equation} \label{bil4} \| u_1 \cdot \bar u_2 \|_{L^2_t
        L^q_x} \ls_q 2^{k_1} 2^{3(\frac12 - \frac1q) k_2} \| u_1
      \|_{S_{k_1}^+} \| u_2 \|_{S_{k_2}^{+,w}}.
    \end{equation}
    The same result holds true for $u_1 \in S^-_{k_1}$, $u_2 \in S^{-,w}_{k_2}$.
\end{pro}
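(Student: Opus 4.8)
Since \eqref{bil2} is essentially the case $l=0$ of \eqref{bil3} (for $l=0$ the separation constraint $\dist(\pm\ka_1,\pm\ka_2)\ls 1$ is vacuous, the cap sums reducing to bounded Fourier multipliers that preserve all the norms involved), the plan for i) and ii) is to establish \eqref{bil3} for every $0\le l\le\min(k_1,k_2)$. I would decompose each $\tilde P_{\ka_1}\psi_1$ into finer caps $\ka\in\mathcal{K}_{k_1}$ at the scale $2^{-k_1}$ attached to the lower frequency, keep $\psi_2$ at the scale-$2^{-l}$ caps, and match the two families (by the separation hypothesis each $\ka\in\mathcal{K}_{k_1}$ meets only $\mathcal{O}(1)$ of the relevant $\ka_2\in\mathcal{K}_l$). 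The genuinely well-separated cap pairs produce a product carrying a large output modulation and are absorbed by the $\dot X^{\pm,\frac12,\infty}$ and $L^\frac43_tL^2_x$ components of the norms together with space–time Bernstein; the heart of the matter is the near-parallel, resonant part, which is exactly where the null structure intervenes. Part iii) will then follow by repeating the scalar analogue of this scheme (the role of the null form being taken there by the combined $L^\infty_tL^2_x$/$\dot X^{\pm,\frac12,\infty}$ control) and applying Bernstein in space.

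For the resonant part the essential input is the null structure of Subsection \ref{subsect:nullst}. On each $\tilde P_{\ka_j}\psi_j$ I freeze the projector at the cap centre, replacing $\Pi_\pm(D)$ by the constant matrix $\Pi_\pm(2^{k_j}\om(\ka_j))$; by Lemma \ref{stable} iii) the error costs only $2^{-l}$. Passing $\beta$ through one factor via $\Pi_\pm(D)\beta=\beta(\Pi_\mp(D)\mp\beta\la D\ra^{-1})$ rewrites the bilinear form as a scalar multiple of $\la\psi_1,\beta\psi_2\ra$ whose coefficient is a product of two frozen projectors of the type $\Pi_\mp\Pi_\pm$ (up to lower-order terms of size $\la\xi\ra^{-1}+\la\eta\ra^{-1}\ls 2^{-k_1}$), and by \eqref{PiPi} together with the separation hypothesis this coefficient is $\mathcal{O}(2^{-l})$ on the cap-diagonal. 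This $2^{-l}$ is precisely the gain of \eqref{bil3} over \eqref{bil2}.

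With the null gain in hand, for each cap-diagonal pair $(\ka,\ka_2)$ I pass to the frame $(t^\pm_{k_1,\ka},x^\pm_{k_1,\ka})$ with parameter $\lambda(k_1)$. This frame change is Euclidean-orthogonal on $\R^{1+3}$, so $\|\cdot\|_{L^2_{t,x}}=\|\cdot\|_{L^2_{t^\pm_{k_1,\ka}}L^2_{x^\pm_{k_1,\ka}}}$, and Hölder gives, up to the lower-order terms,
\[
\big\|\la\Pi_\pm(D)P_\ka\psi_1,\beta\Pi_\pm(D)P_{\ka_2}\psi_2\ra\big\|_{L^2_{t,x}}\ls 2^{-l}\,\|P_\ka\psi_1\|_{L^2_{t^\pm_{k_1,\ka}}L^\infty_{x^\pm_{k_1,\ka}}}\,\|P_{\ka_2}\psi_2\|_{L^\infty_{t^\pm_{k_1,\ka}}L^2_{x^\pm_{k_1,\ka}}}.
\]
The first factor on the right is one of the summands of $\|\psi_1\|_{S^\pm_{k_1}}$ (cf.\ \eqref{eq:sk}, with \eqref{MStr} of Theorem \ref{thm:estr} built in through Proposition \ref{linl}). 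For the second factor one invokes the frame-equivalence of Lemma \ref{stable} to replace $t^\pm_{k_1,\ka}$ by an admissible frame $t^\pm_{j,\ka_1}$ with $j\ge l+10$ and $\ka_1$ the scale-$l$ cap of \eqref{bil3}, whereupon $\|P_{\ka_2}\psi_2\|_{L^\infty_{t^\pm_{j,\ka_1}}L^2_{x^\pm_{j,\ka_1}}}$ equals $2^l$ times a summand of the $S^\pm[k_2,\ka_2]$-component of $\|\psi_2\|_{S^{\pm,w}_{k_2}}$ (cf.\ \eqref{eq:skweak}) — the weight $2^l$ cancelling the null gain $2^{-l}$. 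Summing over the $\approx 2^{2(k_1-l)}$ refining caps $\ka$ inside a fixed $\ka_2$ by Cauchy–Schwarz produces the factor $2^{k_1-l}$, and a second Cauchy–Schwarz over $\ka_2\in\mathcal{K}_l$, against the $\ell^2$-structures written into $S^\pm_{k_1}$ and $S^{\pm,w}_{k_2}$, closes \eqref{bil3}. For iii) the same computation with the scalars $u_1,\bar u_2$ (there is now no null gain, so each angular scale contributes $2^{k_1}$ rather than $2^{k_1-l}$) yields $\|u_1\bar u_2\|_{L^2_{t,x}}\ls 2^{k_1}\|u_1\|_{S^+_{k_1}}\|u_2\|_{S^{+,w}_{k_2}}$, and Bernstein in space — the product being frequency-localized at $\lesssim 2^{k_2}$ — upgrades this to the stated $2^{3(\frac12-\frac1q)k_2}$-loss, the strictness $q>2$ leaving the room that compensates for the absence of the null structure, which is \eqref{bil4}.

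The step I expect to be the real obstacle is the frame bookkeeping. The Strichartz bound for $\psi_1$ naturally lives in the frames $(t^\pm_{k_1,\ka},x^\pm_{k_1,\ka})$ with caps $\ka\in\mathcal{K}_{k_1}$ and parameter $\lambda(k_1)$, whereas the only energy norm available for $\psi_2\in S^{\pm,w}_{k_2}$ is $L^\infty_{t^\pm_{j,\ka_1}}L^2_{x^\pm_{j,\ka_1}}$ with $\ka_1\in\mathcal{K}_l$ merely $2^{-l}$-close to the cap in question, $j\ge l+10$, and parameter $\lambda(j)$. Establishing that a function Fourier-supported in $\tilde B^\pm_{k,\ka}$ is seen equivalently by all these nearby frames, and that the modulation cut-offs $Q^\pm_m$ act boundedly on the corresponding mixed-norm spaces, is exactly what Lemma \ref{stable} is for; carrying the reduction through cleanly — in particular the boundary regime (the handful of scales $l$ near $\min(k_1,k_2)$ where the energy estimates of Theorem \ref{thm:Energy} do not directly apply, and the nearly-equal-frequency case $k_1\approx k_2$), which has to be treated by hand — is the most delicate part.
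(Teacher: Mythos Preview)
You correctly identify the frame bookkeeping as the crux, but your proposed resolution via Lemma~\ref{stable} does not work, and the gap is structural rather than technical. The energy component of $S^\pm[k_2,\ka_2]$ controls $\|\cdot\|_{L^\infty_{t^\pm_{j,\ka'_1}}L^2_{x^\pm_{j,\ka'_1}}}$ only for directions $\ka'_1\in\mathcal K_l$ with $\dist(\ka_2,\ka'_1)\approx 2^{-l}$, not $\ls 2^{-l}$. In your scheme the fine cap $\ka\in\mathcal K_{k_1}$ lies inside the scale-$l$ cap $\ka_1$, and since the constraint in \eqref{bil3} is merely $\dist(\ka_1,\ka_2)\ls 2^{-l}$ (so $\ka_1=\ka_2$ is allowed), the direction $\omega(\ka)$ can fall well inside $\ka_2$, rendering the frame $(t^\pm_{k_1,\ka},x^\pm_{k_1,\ka})$ inadmissible. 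Lemma~\ref{stable} gives boundedness of certain Fourier multipliers on $L^pL^q$ in all frames, but it does not assert that the $L^\infty_{t_{\lambda,\omega}}L^2$-norms for different $\omega$ are comparable --- and indeed they are not: Theorem~\ref{thm:Energy} shows the frame energy degenerates like $\alpha^{-1}$ as the separation $\alpha\to 0$, which is precisely why the lower bound $\dist\geq 2^{-l-3}$ is hard-wired into the definition of $S^\pm[k,\ka]$.

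The paper's remedy is a further decomposition by angular \emph{separation} scale $l_1\in[l,\min(k_1,k_2)]$: both factors are split into caps of $\mathcal K_{l_1}$ restricted to pairs at distance $\approx 2^{-l_1}$ (not $\ls$), so that the Strichartz frame attached to $u_1$ is automatically $\approx 2^{-l_1}$-separated from the cap of $u_2$ and the frame energy applies with weight $2^{l_1}$. The null structure contributes $2^{-l_1}$ at each scale, and the geometric sum over $l_1\ge l$ yields the $2^{k_1-l}$ of \eqref{bil3}. At each fixed $l_1$ one must also peel off the high-modulation pieces $Q^\pm_{\succeq k_1-2l_1}$ (handled by $\dot X^{\pm,\frac12,\infty}$ and Bernstein) before invoking the frame energy, since the $S^\pm[k_2,\ka_2]$ component of $S^{\pm,w}_{k_2}$ only sees $Q^\pm_{\prec k_2-2l_1}$; your remark about ``well-separated cap pairs'' and $L^{\frac43}_tL^2_x$ appears to conflate this modulation split with the angular decomposition. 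The same separation-scale sum is what makes \eqref{bil4} converge: Bernstein applied cap-wise at scale $l_1$ contributes an extra $2^{-2l_1(\frac12-\frac1q)}$, summable exactly when $q>2$; applying Bernstein only once at the end, as you propose, leaves a divergent sum over $l_1$.
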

 
As an immediate consequence \eqref{bil4} we note the following
Strichartz type estimate.
\begin{cor}\label{cor:str-off}
  Let $4 < q \leq \infty$. For all $k \geq 100$,
  \begin{equation} \label{Str4} \| \tilde P_{k} u \|_{L^4_t L^q_x}
    \ls_q 2^{\frac{k}2} 2^{\frac32(\frac12 - \frac2q) k} \| \tilde P_k u
    \|_{S^\pm_k}.
  \end{equation}
\end{cor}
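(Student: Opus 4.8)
The plan is to deduce \eqref{Str4} directly from the bilinear estimate \eqref{bil4} via the standard squaring device. First I would apply \eqref{bil4} with $u_1=u_2=\tilde P_k u$ and $k_1=k_2=k\geq 100$, which is admissible since then $100\leq k_1\leq k_2$. Since $u_1\bar u_2=|\tilde P_k u|^2$, this gives
\[
\big\| |\tilde P_k u|^2 \big\|_{L^2_t L^q_x} \ls_q 2^{k}\,2^{3(\frac12-\frac1q)k}\,\| \tilde P_k u\|_{S^+_k}\,\| \tilde P_k u \|_{S^{+,w}_k}
\]
for every $2<q\leq\infty$, where I also use that $\tilde P_k u\in S^{+,w}_k$ because $S^+_k\hookrightarrow S^{+,w}_k$.

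Next I would invoke the continuous embedding $\| \tilde P_k u \|_{S^{+,w}_k}\ls \| \tilde P_k u \|_{S^+_k}$ recorded right after \eqref{eq:skweak}, together with the elementary identity
\[
\big\| |f|^2 \big\|_{L^2_t L^q_x} = \| f \|_{L^4_t L^{2q}_x}^2,
\]
which follows from $\||f(t)|^2\|_{L^q_x}=\|f(t)\|_{L^{2q}_x}^2$ and then from the fact that the $L^2_t$ norm of a squared function equals the square of its $L^4_t$ norm. Taking square roots yields
\[
\| \tilde P_k u \|_{L^4_t L^{2q}_x} \ls_q 2^{\frac{k}2}\,2^{\frac32(\frac12-\frac1q)k}\,\| \tilde P_k u \|_{S^+_k}.
\]

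Finally I would relabel the exponent: writing $q'=2q$, the range $2<q\leq\infty$ corresponds to $4<q'\leq\infty$, and $\tfrac12-\tfrac1q=\tfrac12-\tfrac2{q'}$, so the previous display is exactly \eqref{Str4} with $q$ replaced by $q'$. The case of the $S^-$ norm is identical, using the $S^-$ version of \eqref{bil4} stated in Proposition \ref{Pbil} iii).

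I do not anticipate any genuine obstacle: Corollary \ref{cor:str-off} is a formal consequence of \eqref{bil4}. The only points requiring (minimal) care are the bookkeeping of the Lebesgue exponents under squaring and relabeling, and the use of the weak-norm bound $\|\tilde P_k u\|_{S^{+,w}_k}\ls\|\tilde P_k u\|_{S^+_k}$; the substantive work has already been done in the proof of Proposition \ref{Pbil}.
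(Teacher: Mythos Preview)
Your proposal is correct and follows exactly the route the paper intends: the text states the corollary as an immediate consequence of \eqref{bil4} (via the squaring device with $u_1=u_2=\tilde P_k u$), and your computation of the exponents after taking square roots and relabeling $q'=2q$ is accurate.
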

By interpolation one can easily obtain all the "off the line''
Strichartz estimates $L^p_t L^q_x$ with $p \geq 4$, following closely
the ideas of \cite{Kr03,Kr04,tao} in the context of wave maps. In the
case of wave maps, it has been observed later in \cite[Section
5.4]{StTa10} that the usual "on the line'' Strichartz estimates such
as $L^4_{t,x}$ hold true in these spaces as well, but this is a little
more difficult to prove and we do not need it here.
 
The low frequency counterpart of \eqref{cor:str-off} is, for all $4 < q \leq \infty$,
\begin{equation} \label{Strlow}
\| \tilde P_{k} u \|_{L^4_t L^q_x} \ls  \| \tilde P_{\leq 99} u \|_{L^4_{t,x}} \ls \| \tilde P_{\leq 99} u \|_{S^\pm_{\leq 99}}.
\end{equation}
which is easily obtained from the $L^4_{t,x}$ using Sobolev embedding. The latter is obtained
using interpolation between the $L^2_t L^\infty_x$ and $L^\infty_t L^2_x$ components of $S^\pm_{\leq 99}$.  
 
\begin{proof}[Proof of Proposition \ref{Pbil}] 
  To make the exposition easier, we choose to prove all the estimates
  for the $+$ choice in all terms.  A careful examination of the
  argument reveals that the other choices follow in a similar manner.
  The focus of the argument is on the high frequency interactions, that is
  $\min(k_1,k_2) \geq 100$. It will be obvious that when  $\min(k_1,k_2) = 99$,
  the argument carries on and in fact it becomes simpler. Note that \eqref{bil3}
  does not say anything new in the case $\min(k_1,k_2) = 99$, while \eqref{bil4}
  is not even stated in this case. 
  
  We will reduce \eqref{bil2},\eqref{bil3} and \eqref{bil4} to the
  following claim: For all $u_1,u_2$ be localized at frequencies
  $2^{k_1}$, respectively $2^{k_2}$, and $|l_1-l_2| \leq 2$ with $l_1
  \leq \min(k_1,k_2)$ the following estimate holds true:
  \begin{equation} \label{bas1} \sum_{\ka_1 \in
      \mathcal{K}_{l_1},\ka_2 \in \mathcal{K}_{l_2}} \| \tilde
    P_{\ka_1}u_1 \tilde P_{\ka_2} u_2 \|_{L^2} \ls 2^{k_1} \| u_1 \|_{S^+_{k_1}} \| u_2
    \|_{S_{k_2}^{+,w}}.
  \end{equation}
  where the above sum is restricted to the range $\dist(\ka_1, \ka_2)
  \approx 2^{-l_1}$ or $\dist( \ka_1, \ka_2) \ls 2^{-l_1}$ in the case
  $|l_1-\min(k_1,k_2)| \leq 2$.

  \underline{First case: $k_1 \leq k_2$.} If $l_1 \leq k_1 - 10$, then
  \begin{equation*}
    \sum_{\ka_1 \in
      \mathcal{K}_{l_1},\ka_2 \in \mathcal{K}_{l_2}}\| \tilde P_{\ka_1} u_1 \cdot \tilde P_{\ka_2} u_2  \|_{L^2} 
    \leq A_0+A_1+A_2+A_3.
  \end{equation*}
We will provide estimates for each contribution.
\begin{align*}
    A_0:=&
\sum_{\ka_1 \in
      \mathcal{K}_{l_1},\ka_2 \in \mathcal{K}_{l_2}}
\| Q_{\succeq k_1-2l_1} \tilde  P_{\ka_1} u_1 \|_{L^4_tL^\infty_x} \| \tilde P_{\ka_2}
    Q_{\succeq k_1-2l_2} u_2 \|_{L^4_tL^2_x} \\
    \ls{} & 2^{\frac{3k_1-2l_1}2} \Big(\sum_{\ka_1 \in
      \mathcal{K}_{l_1}}
\| Q_{\succeq k_1-2l_1} \tilde  P_{\ka_1} u_1 \|^2_{L^4_tL^2_x}\Big)^{\frac12} \Big(\sum_{\ka_2 \in
      \mathcal{K}_{l_2}}\| \tilde P_{\ka_2}
    Q_{\succeq k_1-2l_2} u_2 \|_{L^4_tL^2_x}^2\Big)^{\frac12}.
  \end{align*}
Now, we use
\begin{align*}
& \Big(\sum_{\ka_1 \in
      \mathcal{K}_{l_1}}
\| Q_{\succeq k_1-2l_1} \tilde  P_{\ka_1} u_1
\|^2_{L^4_tL^2_x}\Big)^{\frac12}\ls \sum_{m \succeq k_1-2l_1} \Big(\sum_{\ka_1 \in
      \mathcal{K}_{l_1}}
\| Q_{m} \tilde  P_{\ka_1} u_1
\|^2_{L^4_tL^2_x}\Big)^{\frac12}\\
\ls & \sum_{m \succeq k_1-2l_1} 2^{\frac{m}{4}}\Big(\sum_{\ka_1 \in
      \mathcal{K}_{l_1}}
\| Q_{m} \tilde  P_{\ka_1} u_1
\|^2_{L^2_tL^2_x}\Big)^{\frac12}\\
\ls & \sum_{m \succeq k_1-2l_1} 2^{\frac{m}{4}}
\| Q_{m} u_1
\|_{L^2_tL^2_x}\ls  2^{-\frac{k_1-2l_1}{4}}\|Q_{\succeq k_1-2l_1} u_1\|_{\dot X^{+,\frac12,\infty}}
\end{align*}
to complete the argument as follows:
\begin{align*}
A_0\ls & 2^{\frac{3k_1-2l_1}2}  2^{-\frac{k_1-2l_1}{2}}\|Q_{\succeq k_1-2l_1} u_1\|_{\dot X^{+,\frac12,\infty}}\|Q_{\succeq k_1-2l_2} u_2\|_{\dot X^{+,\frac12,\infty}}\\
    \ls{}& 2^{k_1} \| \tilde P_{\ka_1} u_1 \|_{S^+_{k_1}} \| \tilde
    P_{\ka_2} u_2 \|_{S_{k_2}^{+,w}},
  \end{align*}
and 
  \begin{align*}
    A_1:=& \sum_{\ka_1 \in
      \mathcal{K}_{l_1},\ka_2 \in \mathcal{K}_{l_2}}\| Q_{\prec
      k_1-2l_1} \tilde P_{\ka_1} u_1 \|_{L^\infty} \| \tilde P_{\ka_2}
    Q_{\succeq k_1-2l_2} u_2 \|_{L^2} \\
\ls & \Big(\sum_{\ka_1 \in
      \mathcal{K}_{l_1}}\| Q_{\prec k_1-2l_1} \tilde P_{\ka_1} u_1 \|^2_{L^\infty} \Big)^{\frac12}\Big(\sum_{\ka_2 \in
      \mathcal{K}_{l_2}}\| \tilde P_{\ka_2}
    Q_{\succeq k_1-2l_2} u_2 \|^2_{L^2} \Big)^{\frac12}\\
\ls & 2^{\frac{3k_1-2l_1}2} \Big(\sum_{\ka_1 \in
      \mathcal{K}_{l_1}}\| Q_{\prec k_1-2l_1} \tilde P_{\ka_1} u_1
    \|^2_{L^\infty_tL^2_x} \Big)^{\frac12}
2^{-\frac{k_1-2l_2}2} \|
    Q_{\succeq k_1-2l_2}
    u_2 \|_{\dot{X}^{+,\frac12,\infty}} \\
    \ls{}& 2^{k_1} \| u_1 \|_{S^+_{k_1}} \| u_2 \|_{S_{k_2}^{+,w}},
  \end{align*}
  and
  \begin{align*}
    A_2:= &\sum_{\ka_1 \in
      \mathcal{K}_{l_1},\ka_2 \in \mathcal{K}_{l_2}} \| Q_{\succeq
      k_1-2l_1} \tilde  P_{\ka_1} u_1 \|_{L^2_t
      L^\infty_{x}} \| Q_{\prec k_1-2l_2} \tilde P_{\ka_2} u_2
    \|_{L^\infty_{t}L^2_{x}}\\
 \ls &\Big(\sum_{\ka_1 \in
      \mathcal{K}_{l_1}} \| Q_{\succeq k_1-2l_1}\tilde P_{\ka_1} u_1 \|^2_{L^2_t
      L^\infty_{x}}\Big)^{\frac12} \Big(\sum_{\ka_2 \in
      \mathcal{K}_{l_2}}\| Q_{\prec k_1-2l_2} \tilde P_{\ka_2} u_2
    \|^2_{L^\infty_{t}L^2_{x}}\Big)^{\frac12}\\
    \ls{}& 2^{\frac{3k_1-2l_1}2} \Big(\sum_{\ka_1 \in
      \mathcal{K}_{l_1}}\| Q_{\succeq k_1-2l_1} \tilde P_{\ka_1}
    u_1 \|_{L^2_{t,x}}^2\Big)^{\frac12} \| u_2 \|_{S_{k_2}^{+,w}}
    \\
    \ls{}& 2^{k_1} \| u_1 \|_{S^+_{k_1}} \| u_2 \|_{S_{k_2}^{+,w}},
  \end{align*}
  as well as
  \begin{align*}
    A_3&:= \sum_{\ka_1 \in
      \mathcal{K}_{l_1},\ka_2 \in \mathcal{K}_{l_2}} \sum_{\ka \in
      \mathcal{K}_{k_1}} \|  P_\ka Q_{\prec k_1-2l_1} \tilde P_{\ka_1}
    u_1 \|_{L^2_{t_{k_1,\ka}}L^\infty_{x_{k_1,\ka}}} \| Q_{\prec
      k_1-2l_2} \tilde P_{\ka_2} u_2
    \|_{L^\infty_{t_{k_1,\ka}}L^2_{x_{k_1,\ka}}}\\
 & \ls  2^{l_1} \sum_{\ka_1 \in
      \mathcal{K}_{l_1},\ka_2 \in \mathcal{K}_{l_2}} \| Q_{\prec
      k_1-2l_2} \tilde P_{\ka_2} u_2
    \|_{S[k_2,\ka_2]} \sum_{\ka \in
      \mathcal{K}_{k_1}} \|  P_\ka \tilde P_{\ka_1}
    u_1 \|_{L^2_{t_{k_1,\ka}}L^\infty_{x_{k_1,\ka}}} \\
& \ls  2^{k_1} \sum_{\ka_1 \in
      \mathcal{K}_{l_1},\ka_2 \in
      \mathcal{K}_{l_2}} \| Q_{\prec
      k_1-2l_2} \tilde P_{\ka_2} u_2
    \|_{S[k_2,\ka_2]} \Big(\sum_{\ka \in
      \mathcal{K}_{k_1}} \|  P_\ka \tilde P_{\ka_1}
    u_1 \|_{L^2_{t_{k_1,\ka}}L^\infty_{x_{k_1,\ka}}}^2\Big)^{\frac12} \\
    & \ls  2^{k_1}\Big( \sum_{\ka_2 \in
      \mathcal{K}_{l_2}} \| Q_{\prec
      k_1-2l_2} \tilde P_{\ka_2} u_2
    \|_{S[k_2,\ka_2]}^2\Big)^{\frac12} \Big(\sum_{\ka \in
      \mathcal{K}_{k_1}} \|  P_\ka u_1 \|_{L^2_{t_{k_1,\ka}}L^\infty_{x_{k_1,\ka}}}^2\Big)^{\frac12}\\
    &\ls 2^{k_1} \| u_1 \|_{S^+_{k_1}} \| u_2 \|_{S_{k_2}^{+,w}}.
  \end{align*}
  If $k_1 -10 \leq l_1 \leq k_1$, then the argument is entirely
  similar, but for the $A_3$ contribution we use $L^2_t L^\infty_x$
  and $L^\infty_t L^2_x$.
    
  \underline{Second case: $k_1 \geq k_2$.} The argument above works
  the same way for $l_1 \leq k_2-10$. Consider now the case $
  k_2 - 10 \leq l_1 \leq k_2$. Again, the contributions analogous to  $A_0$, $A_1$
  and $A_2$ can be treated in the same way (now, the modulation
  threshold is $k_2-2l_j$). In the case of $A_3$ (low
  modulation), we face the problem that $\|P_{\ka_1}
  u_1\|_{L^2_tL^\infty_x}$ gives suboptimal bounds, because $\ka_1\in
  \mathcal{K}_{l_1}$ with $l_1\approx k_2$ instead of $k_1$. Therefore, we decompose
  \[
  \tilde P_{\ka_1} u_1 = \sum_{\ka \in \mathcal{K}_{k_1}} P_{\ka}
  \tilde P_{\ka_1} u_1
  \]
  and note that the interactions $P_{\ka} \tilde P_{\ka_1} u_1 \tilde
  P_{\ka_2} u_2$ are almost orthogonal with respect to $\ka \in
  \mathcal{K}_{k_1}$. Indeed this follows from the fact that both
  $P_{\ka} \tilde P_{\ka_1} u_1$ and $ \tilde P_{\ka_2} u_2$ have
  Fourier-support of size $\approx 1$ in the orthogonal directions to
  $\omega(\ka_2)$. Thus
  \begin{align*}
    &\| \tilde P_{\ka_1}  Q_{\prec
      k_2-2l_1} u_1 \cdot \tilde P_{\ka_2}  Q_{\prec
      k_2-2l_2} u_2 \|_{L^2}^2\\
    \ls{}& \sum_{\ka \in \mathcal{K}_{k_1}} \| P_{\ka} \tilde P_{\ka_1}  Q_{\prec
      k_2-2l_1} u_1 \cdot \tilde P_{\ka_2}  Q_{\prec
      k_2-2l_2} u_2 \|_{L^2}^2 \\
    \ls{}& \sum_{\ka \in \mathcal{K}_{k_1}} \| P_{\ka} \tilde P_{\ka_1} Q_{\prec
      k_2-2l_1} u_1\|_{L^2_t L^\infty_x}^2  \cdot \| \tilde P_{\ka_2}  Q_{\prec
      k_2-2l_2} u_2 \|_{L^\infty_t L^2_x}^2.
  \end{align*}
For the contribution $A_3$, we obtain the bound
\begin{align*}
    & \sum_{\ka_1 \in
      \mathcal{K}_{l_1},\ka_2 \in \mathcal{K}_{l_2}} \Big(\sum_{\ka \in \mathcal{K}_{k_1}} \| P_{\ka} \tilde P_{\ka_1}  Q_{\prec
      k_2-2l_1} u_1\|_{L^2_t L^\infty_x}^2  \cdot \| \tilde P_{\ka_2}  Q_{\prec
      k_2-2l_2} u_2 \|_{L^\infty_t L^2_x}^2\Big)^{\frac12}\\
 & \ls  \sum_{\ka_1 \in
      \mathcal{K}_{l_1},\ka_2 \in \mathcal{K}_{l_2}}  \Big(\sum_{\ka \in
      \mathcal{K}_{k_1}}\|  P_\ka \tilde P_{\ka_1}
    Q_{\prec
      k_2-2l_1} u_1 \|_{L^2_t L^\infty_x}^2\Big)^{\frac12} \| Q_{\prec
      k_2-2l_2} \tilde P_{\ka_2} u_2
    \|_{S[k_2,\ka_2]}\\
& \ls  \Big(\sum_{\ka \in
      \mathcal{K}_{k_1}}\|  P_\ka
    Q_{\prec
      k_2-2l_1} u_1 \|_{L^2_t L^\infty_x}^2\Big)^{\frac12} \Big(\sum_{\ka_2 \in \mathcal{K}_{l_2}} \| Q_{\prec
      k_2-2l_2} \tilde P_{\ka_2} u_2
    \|_{S[k_2,\ka_2]}^2\Big)^{\frac12}\\
    &\ls 2^{k_1} \| u_1 \|_{S^+_{k_1}} \| u_2 \|_{S_{k_2}^{+,w}}.
\end{align*}
The proof of the claim
  \eqref{bas1} is now complete.

  As an immediate consequence of the above argument we obtain
  \begin{equation} \label{bas1a} \sum_{\ka_1 \in
      \mathcal{K}_{l_1},\ka_2 \in \mathcal{K}_{l_2}} \| \tilde
    P_{\ka_1}u_1 \overline{\tilde P_{\ka_2} u_2} \|_{L^2} \ls 2^{k_1}
    \| u_1 \|_{S^+_{k_1}} \|u_2
    \|_{S_{k_2}^{+,w}}.
  \end{equation}

  Now, we turn to the proof of \eqref{bil2}.  Using \eqref{bas1a} we
  claim the following
  \begin{equation} \label{basnul}\begin{split} &\sum_{\ka_1 \in
        \mathcal{K}_{l_1},\ka_2 \in \mathcal{K}_{l_2}} \| \la \Pi_+(D)
      P_{\ka_1} \psi_1, \beta \Pi_+(D) P_{\ka_2} \psi_2 \ra \|_{L^2}\\
      \ls{}& 2^{k_1-l_1} \| \Pi_+(D) P_{k_1,\ka_1} \psi_1
      \|_{S^+_{k_1}} \| \Pi_+(D) P_{k_2,\ka_2} \psi_2
      \|_{S_{k_2}^{+,w}},
    \end{split}
  \end{equation}
  where the sum is restricted to the range $\dist(\ka_1, \ka_2)
  \approx 2^{-l_1}$ or $\dist( \ka_1, \ka_2) \ls 2^{-l_1}$ in the case
  $|l_1-\min(k_1,k_2)| \leq 2$.  To prove \eqref{basnul}, we linearize
  the operator $\Pi_+(D)$ as follows
  \[
  \Pi_+(D) = \Pi_+(2^{k_j} \omega(\ka_j)) + \Pi_+(D) - \Pi_+(2^{k_j}
  \omega(\ka_j))
  \]
  where $j=1,2$. Taking into account \eqref{bas1a} and \eqref{PiPi} we
  obtain
  \begin{align*}
    &\| \la \Pi_+(2^{k_1} \omega(\ka_1)) P_{\ka_1} \psi_1 , \beta
    \Pi_+(2^{k_2} \omega(\ka_2)) P_{\ka_2} \psi_2 \ra
    \|_{L^2} \\
    \ls{}& 2^{k_1-l_1} \| P_{\ka_1} \psi_1 \|_{S^+_{k_1}} \| P_{\ka_2}
    \psi_2 \|_{S_{k_2}^{+,w}}
  \end{align*}
  where we have used $|\angle(\omega(\ka_1),\omega(\ka_2)) | \ls
  2^{-l_1}$ and that
  \[
  \mathcal{O}(2^{-k_1} + 2^{-k_2}) \ls 2^{-\min(k_1,k_2)} \ls
  2^{-l_1}.
  \]
  
  The estimate for the remaining terms follows from using
  \eqref{bas1a} and \eqref{sta}.  By organizing the interacting
  factors based on their angle of interaction we have
  \begin{align*}
    &\| \la \Pi_+(D) \psi_1 , \beta \Pi_+(D) \psi_2 \ra \|_{L^2} \\
    \ls{}& \sum_{|l_1-l_2| \leq 2} \sum_{\ka_1 \in
      \mathcal{K}_{l_1},\ka_2 \in \mathcal{K}_{l_2}} \| \la P_{\ka_1}
    \Pi_+(D) \psi_1 , \beta P_{\ka_2} \Pi_+(D) \psi_2 \ra \|_{L^2}
  \end{align*}
  where the first sum is restricted over the range $1 \leq l_1,l_2
  \leq \min(k_1,k_2)$, and the second sum is restricted over the range
  $\dist(\ka_1,\ka_2) \approx 2^{-l_1}$ or $\dist(\ka_1,\ka_2) \ls
  2^{-l_1}$ in the case $|l_1-\min(k_1,k_2)| \leq 2$.  The result for
  the second sum follows from the \eqref{basnul}. The first sum, 
  with respect to $l_1$ (the one with respect to $l_2$
  is redundant), is performed using the factor of $2^{-l_1}$.
  
  The proof of \eqref{bil3} is entirely similar, expect that in the
  decomposition above one imposes the range $l \leq l_1,l_2 \leq \min(k_1,k_2)$ 
  on the first sum and picks up
  the additional factor of $2^{-l}$.
  
  Finally, we turn to the proof of \eqref{bil4}. Fix $l_1,l_2$ with
  $|l_1-l_2| \leq 2$, $1 \leq l_1 \leq k_1$, $\ka_1 \in
  \mathcal{K}_{l_1}, \ka_2 \in \mathcal{K}_{l_2}$ with $\dist(\ka_1,
  \ka_2) \approx 2^{-l_1}$ or $\dist( \ka_1, \ka_2) \ls 2^{-l_1}$ in
  the case $|l_1-k_1| \leq 2$. The proof of \eqref{bas1a} yields
  \begin{align*}
    &\sum_{\ka_1 \in \mathcal{K}_{l_1},\ka_2 \in \mathcal{K}_{l_2}} \| \tilde P_{\ka_1}u_1 \overline{\tilde P_{\ka_2} u_2} \|_{L^2_t
      L^q_x}\\
    \ls{} & 2^{(\frac12 - \frac1q)(3k_2-2l_1)}  \sum_{\ka_1 \in
      \mathcal{K}_{l_1},\ka_2 \in \mathcal{K}_{l_2}} \| \tilde P_{\ka_1}u_1 \overline{\tilde P_{\ka_2} u_2}  \|_{L^2_t L^2_x} \\
    \ls{} & 2^{k_1} 2^{(\frac12 - \frac1q)(3k_2-2l_1)} \| u_1 \|_{S^+_{k_1}} \| u_2
    \|_{S_{k_2}^{+,w}},
  \end{align*}
  where the sum is restricted to caps satisfying $\dist(\ka_1, \ka_2)
  \approx 2^{-l_1}$ or $\dist( \ka_1, \ka_2) \ls 2^{-l_1}$ in the case
  $|l_1-k_1| \leq 2$. Summing this inequality with respect to
  $l_1,l_2$ gives \eqref{bil4}.
\end{proof}

\section{The Dirac nonlinearity}\label{sect:dirac}
In this section we use the theory developed in the previous section to
prove the global well-posedness of the Dirac equation with initial
data in $H^1(\R^3)$. Throughout this section we abuse
notation and set $S_{99}^\pm:=S^\pm_{\leq 99}$, redefine $P_{99}:=P_{\leq 99}$,
$\tilde P_{99}:=\tilde P_{\leq 99}$,
and thus by saying that a function is localized at frequency $2^{99}$
we mean that it is localized at frequency $\leq 2^{99}$.  

The main result of this section is the following
\begin{thm} \label{thm:thnon} Choose $s_1,s_2,s_3,s_4 \in \{ +,
  -\}$. Then, for all $\psi_k \in S^{s_k,1}$ satisfying
  $\psi_k=\Pi_{s_k}(D) \psi_k$ for $k=1,2,3$, we have
  \begin{equation} \label{cun} \| \Pi_{s_4}(D) ( \la \psi_1, \beta
    \psi_2 \ra \beta \psi_3) \|_{N^{s_4,1}} \ls \|
    \psi_1\|_{S^{s_1,1}} \| \psi_2 \|_{S^{s_2,1}} \| \psi_3
    \|_{S^{s_3,1}}.
  \end{equation}
\end{thm}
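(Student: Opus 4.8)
The plan is to prove \eqref{cun} by a Littlewood--Paley decomposition of all four spinors involved (the three inputs and the dual test function), reducing each piece to two applications of the bilinear $L^2$ estimates of Proposition \ref{Pbil} together with the null structure \eqref{PiPi}. Since $\psi_j=\Pi_{s_j}(D)\psi_j$, the nonlinearity is $\la \Pi_{s_1}(D)\psi_1,\beta \Pi_{s_2}(D)\psi_2\ra\,\beta \Pi_{s_3}(D)\psi_3$, so both bilinear forms that will appear have exactly the shape treated in Proposition \ref{Pbil}. Writing $\psi_j=\sum_{k_j}P_{k_j}\psi_j$, decomposing the output into $P_{k_4}$, and using that $\la P_{k_1}\psi_1,\beta P_{k_2}\psi_2\ra$ has Fourier support in $\{|\xi|\ls 2^{\max(k_1,k_2)}\}$ while $P_{k_4}$ of the product vanishes unless $k_4\le\max(k_1,k_2,k_3)+O(1)$, \eqref{cun} reduces to a single-frequency bound of the form
\begin{equation*}
  2^{k_4}\big\| P_{k_4}\Pi_{s_4}(D)\big(\la P_{k_1}\psi_1,\beta P_{k_2}\psi_2\ra\,\beta P_{k_3}\psi_3\big)\big\|_{N^{s_4}_{k_4}} \ls \mathfrak c(k_1,k_2,k_3,k_4)\prod_{j=1}^3 2^{k_j}\|P_{k_j}\psi_j\|_{S^{s_j}_{k_j}},
\end{equation*}
with coefficients $\mathfrak c$ summable in the off-diagonal directions; the low-frequency pieces $k_j=99$ are a routine variant.

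For the atomic part of the $N^{s_4}_{k_4}$-norm I would argue by duality. By \eqref{dual} the dual unit ball of $N^{s_4,at}_{k_4}$ sits, up to a constant, inside the unit ball of $S^{-s_4,w}_{k_4}$, so it suffices to estimate the pairing of the nonlinearity against any $\phi_4$ localized at frequency $2^{k_4}$ with $\|\phi_4\|_{S^{-s_4,w}_{k_4}}\ls 1$; note that $S^{-s_4,w}_{k_4}$ carries the cap-localized component, which is precisely what gives access to the adapted frames. Using that $\la\psi_1,\beta\psi_2\ra$ is scalar and that $\Pi_{s_4}(D)$ and $\beta$ are self-adjoint, the pairing becomes
\begin{equation*}
  \int \la \Pi_{s_1}(D)\psi_1,\beta \Pi_{s_2}(D)\psi_2\ra\;\overline{\la \Pi_{s_3}(D)\psi_3,\beta \Pi_{s_4}(D)\widetilde\phi_4\ra}\,dt\,dx, \qquad \widetilde\phi_4:=\Pi_{s_4}(D)\phi_4
\end{equation*}
(still in the unit ball of $S^{-s_4,w}_{k_4}$, by the symbol bounds on $\Pi_{s_4}(D)$ and Lemma \ref{stable}), and Cauchy--Schwarz in $L^2_{t,x}$ bounds it by the product of two $L^2_{t,x}$-bilinear norms, each governed by Proposition \ref{Pbil}: by \eqref{bil2} in the balanced regime and by its cap-localized refinement \eqref{bil3}, with the extra angular gain $2^{-l}$, in the resonant one. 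The remaining $2^{-k_4/4}L^{4/3}_tL^2_x$ part of the norm is handled by H\"older in $L^{4/3}_t$ together with the off-line Strichartz estimates of Corollary \ref{cor:str-off}, the prefactor $2^{-k_4/4}$ providing a harmless subcritical gain.

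The bulk of the work is then the case analysis in $(k_1,k_2,k_3,k_4)$. In the balanced configuration $k_1\approx k_2\approx k_3\approx k_4$ one applies \eqref{bil2} to both factors, the resulting $\mathfrak c\approx 1$ being summable against the $\ell^2$ structure of the $S^{s_j,1}$-norms --- the genuine angular summation, and the would-be logarithmic divergence of the endpoint estimate, are already packaged inside the proof of Proposition \ref{Pbil}. When one frequency dominates, the Fourier-support constraint forces one of the two bilinear forms to have an output frequency much smaller than its inputs, hence --- via the identity $\Pi_\pm(D)\beta=\beta(\Pi_\mp(D)\mp\beta\la D\ra^{-1})$ and \eqref{PiPi} --- an angle $\ls 2^{-l}$ between (up to a sign) the two interacting frequencies; \eqref{bil3} then supplies the matching $2^{-l}$, or, when the small frequency is itself an input, the remainder $O(\la\xi\ra^{-1})$ in \eqref{PiPi} already turns the divergent dyadic sum into a convergent one. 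Throughout, one must keep track of the sixteen sign choices $(s_1,s_2,s_3,s_4)$: a modulation-versus-frequency analysis identifies, for each choice, which of the two bilinear interactions is resonant (its two factors carrying a small forced output frequency), and in every case \eqref{PiPi} yields the cancellation appropriate to that geometry --- a parallel gain when the two $\Pi$-signs agree, an antiparallel gain when they disagree --- while the non-resonant configurations produce an output of modulation $\gs 2^{\max(k_1,k_2,k_3)}$ that is placed directly into the $\dot{X}^{s_4,-\frac12,1}$ atom.

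I expect the main obstacle to be exactly the resonant, near-parallel, high-frequency interactions: one must make the gain produced by \eqref{PiPi} match quantitatively, through \eqref{bil3}, the logarithmic loss from the angular summation, while at the same time reorganizing the output into the cap-localized atomic structure of $N^{s_4}_{k_4}$ in the correct $(\lambda,\omega)$-frames --- which is where the sharp $\ell^1$-over-caps endpoint estimate \eqref{MStr} of Theorem \ref{thm:estr}, the frame-adapted energy bounds of Theorem \ref{thm:Energy}, and the stability Lemma \ref{stable} are needed --- and keeping the sign bookkeeping consistent. A secondary but delicate point is to check that the $L^{4/3}_tL^2_x$ component of the nonlinearity remains controlled in those same resonant regimes, where the naive H\"older--Strichartz bound is too lossy and the bilinear structure must again be invoked.
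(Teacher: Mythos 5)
Your outline matches the paper's proof in its essentials: a frequency-localized reduction to a coefficient function $G(\mathbf{k})$ (Proposition \ref{pro:thnon}), duality against $S^{-s_4,w}_{k_4}$ via \eqref{dual} together with the symbol identity $\Pi_{s_4}(\xi)=\Pi_{-s_4}(-\xi)-s_4\beta\la\xi\ra^{-1}$, Cauchy--Schwarz splitting the quadrilinear pairing into two bilinear $L^2$ forms governed by Proposition \ref{Pbil} and the null structure \eqref{PiPi}, the dichotomy "small interaction angle (use \eqref{bil3}) versus high modulation (use Strichartz/Sobolev)" carried out case by case in Lemma \ref{lem:cunn23} and made quantitative by the modulation Lemma \ref{lem:mod}, and Strichartz estimates for the $L^{4/3}_tL^2_x$ part (Lemma \ref{lem:cunn1}). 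The one spot where you depart slightly is the suggestion to route non-resonant high-modulation output directly into the $\dot X^{s_4,-\frac12,1}$ atom of $N^{s_4}_{k_4}$: the paper instead stays entirely within the duality pairing and exploits high modulation of an \emph{input} factor through its $\dot X^{\pm,\frac12,\infty}$ and $L^{4/3}_tL^2_x$ control combined with Strichartz/Sobolev bounds on the remaining three factors, which avoids the $\ell^1$-over-modulation summation your variant would have to supply.
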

The rest of this section is devoted to the proof of Theorem
\ref{thm:thnon} and the proof of our main result Theorem
\ref{thm:main}.  The estimate \eqref{cun} will be derived from similar
estimates for frequency localized functions. Our aim will be to
identify a function $G(k_1,k_2,k_3,k_4) : \N^4_{\geq 99} \rightarrow (0,\infty)$
such that
\begin{equation} \label{G} \sum_{k_1,k_2,k_3,k_4 \in \N_{\geq 99}}
  G(k_1,k_2,k_3,k_4) a_{k_1} b_{k_2} c_{k_3} d_{k_4} \ls \| a \|_{l^2}
  \| b \|_{l^2} \| c \|_{l^2} \| d \|_{l^2}
\end{equation}
for all sequences $a=(a_j)_{j \in \N_{\geq 99}}$, etc, in $l^2$. 
Here $\N_{\geq 99}=\{ n \in \N | n \geq 99 \}$.
We set $\mathbf{k}=(k_1,k_2,k_3,k_4)$.

With these notations, the result of Theorem \ref{thm:thnon} follows
from
\begin{pro} \label{pro:thnon} There exists a function $G$ satisfying
  \eqref{G} such that if $\psi_j$ are localized at frequency $2^{k_j}$, $k_j \geq 99$
  and $\psi_j = \Pi_{s_j}(D) \psi_j$ for $j=1,\ldots,4$, then the
  following holds true
  \begin{equation} \label{cunn} 2^{k_4} \| P_{k_4} \Pi_{s_4}(D) ( \la
    \psi_1, \beta \psi_2 \ra \beta \psi_3) \|_{N^{s_4}_{k_4}} \ls
    G(\mathbf{k}) \prod_{j=1}^3 2^{k_j} \| \psi_j \|_{S^{s_j}_{k_j}},
  \end{equation}
  for any choice of sign $s_1,s_2,s_3,s_4 \in \{ +, -\}$.
\end{pro}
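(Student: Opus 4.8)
The plan is to deduce \eqref{cunn} from the bilinear estimates of Proposition~\ref{Pbil}, the null structure of Subsection~\ref{subsect:nullst}, and the atomic description of $N^{s_4}_{k_4}$ in \eqref{eq:n-atom}--\eqref{dual}. After a dyadic and angular decomposition of the three inputs, I would use the sign constraints $\psi_j=\Pi_{s_j}(D)\psi_j$ together with $\Pi_\pm(D)\beta=\beta(\Pi_\mp(D)\mp\beta\la D\ra^{-1})$ to rewrite $\la\psi_1,\beta\psi_2\ra$ so that its bilinear symbol is one of the products $\Pi_a(\xi_2)\Pi_b(\xi_1)$ controlled by the Lemma of Subsection~\ref{subsect:nullst}, i.e.\ of size $\mathcal{O}(\angle(\xi_1,\pm\xi_2))+\mathcal{O}(\la\xi_1\ra^{-1}+\la\xi_2\ra^{-1})$ --- small precisely when the two input frequencies are (anti)parallel, which is exactly the gain already summed in \eqref{bil2} and quantified at angular scale $2^{-l}$ in \eqref{bil3}. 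The contributions of the $\mathcal{O}(\la D\ra^{-1})$ remainder carry an extra negative power of a large frequency and are disposed of easily, so only the genuinely bilinear terms matter. By the symmetry of the roles of the inputs it suffices to treat $k_1=\max(k_1,k_2,k_3)$; Fourier support considerations then force $k_4\le k_1+\mathcal{O}(1)$ and $k_1\approx\max(k_2,k_4)$.

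To bound the output I would split $N^{s_4}_{k_4}=N^{s_4,at}_{k_4}+2^{-k_4/4}L^{4/3}_tL^2_x$ and, inside $N^{s_4,at}_{k_4}$, along the three kinds of atoms in \eqref{eq:n-atom}. The $2^{-k_4/4}L^{4/3}_tL^2_x$ piece is estimated by H\"older in space and time: one places $\la\psi_1,\beta\psi_2\ra$ in $L^2_{t,x}$ via \eqref{bil2} (or in $L^2_tL^q_x$ via \eqref{bil4}) and $\psi_3$ in a Strichartz norm supplied by Corollary~\ref{cor:str-off}, using in addition the auxiliary $L^{4/3}_tL^2_x$ components that are built into the spaces $S^{s_j}_{k_j}$ and the frequency gaps among the $k_j$ to produce the necessary decay. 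For the part of the output at modulation $\succeq 2^{k_4-2l}$ --- in particular all high-modulation output --- one uses the $\dot{X}^{s_4,-\frac12,1}$ atom: there $\|Q^{s_4}_m P_{k_4}(\cdot)\|_{L^2}$ is controlled by a trilinear $L^2$ estimate (two factors paired by \eqref{bil2} or \eqref{bil3}, the third placed in $L^\infty_tL^2_x$ together with a Bernstein factor), and the weight $\sum_m 2^{-m/2}$ is summable because the admissible output modulations are bounded below in terms of the frequencies.

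The heart of the matter is the output at low modulation, which must be realized as a cap-localized atom $g_l$; this is exactly where the null structure, and the whole adapted-frame apparatus of Section~\ref{sect:le}, are indispensable. If $\la\psi_1,\beta\psi_2\ra$ lands at modulation $\prec 2^{k_4-2l}$, then the frequencies of $\psi_1$ and $\psi_2$ --- and that of the output --- are aligned within angle $\approx 2^{-l}$, so the interaction is of the type controlled by \eqref{bil3}, whose angular gain $2^{-l}$ compensates the loss $2^{l}$ in the definition of $N^{\pm}[k_4,\kappa]$. For each cap $\kappa\in\mathcal{K}_l$ I would estimate the corresponding piece in $L^1_{t^{s_4}_{j,\kappa_1}}L^2_{x^{s_4}_{j,\kappa_1}}$ by H\"older in an adapted frame: a product of two factors is placed in $L^2_{t,x}=L^2_{t^{s_4}_{j,\kappa_1}}L^2_{x^{s_4}_{j,\kappa_1}}$ by \eqref{bil3}, and the remaining factor in $L^2_{t^{s_4}_{j,\kappa_1}}L^\infty_{x^{s_4}_{j,\kappa_1}}$ via its adapted-frame Strichartz norm in $S^{s_j}_{k_j}$, supplied by Theorem~\ref{thm:estr} (this is where \eqref{MStr} and the multiscale term of \eqref{eq:sk} enter, and where having many frames available is decisive), or alternatively using the adapted-frame energy estimate of Theorem~\ref{thm:Energy}. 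The low-modulation condition forces all the caps involved to be aligned, so a common frame can be chosen; the correct frequency and modulation localizations of the atoms are restored by Lemma~\ref{stable}, and the summation over $\kappa\in\mathcal{K}_l$ relies on the almost-orthogonality of angularly separated pieces.

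Finally, the accumulated gains --- the geometric sum $\sum_l 2^{-l}$ over angular scales, the Bernstein factors $2^{-\delta|k_i-k_j|}$ coming from the frequency gaps (notably the gain $2^{k_4-k_1}$ in the high$\times$high$\to$low regime, again via \eqref{bil3}), and the fractional exponents from Corollary~\ref{cor:str-off} --- are collected into a single $G(\mathbf{k})$ which decays geometrically in each $|k_i-k_j|$ and in $\max_j k_j-\mathrm{med}_j k_j$, so that \eqref{G} reduces to a routine Schur-test computation. The main obstacle is the low-modulation cap-localized case of the third paragraph: aligning the adapted frames of the three inputs and of the output, extracting the cap-localized atomic decomposition with the correct $\ell^2$-over-caps structure, and doing so without spending the $2^{-l}$ angular gain --- this is precisely the point at which all the machinery of Sections~\ref{sect:le} and \ref{fspaces} is brought to bear, and where the logarithmic divergence responsible for the failure of the classical endpoint would otherwise resurface.
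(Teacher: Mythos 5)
Your high-level inventory of ingredients (Proposition~\ref{Pbil}, the null structure of Subsection~\ref{subsect:nullst}, the modulation dichotomy that is Lemma~\ref{lem:mod}, and the various Strichartz/Sobolev bounds) matches the paper's, but the way you propose to \emph{organize} the estimate is genuinely different from what the paper does, and the difference is precisely where your proposal leaves the hard part unresolved. The paper does not construct the atomic decomposition of the nonlinearity $\mathcal{N}=P_{k_4}\Pi_{s_4}(D)(\la\psi_1,\beta\psi_2\ra\beta\psi_3)$ directly. Instead it invokes the duality embedding \eqref{dual}, $S_k^{\mp}\subset (N_k^{\pm,at})^*\subset S_k^{\mp,w}$, so that the $N^{s_4,at}_{k_4}$-bound is reduced to testing $\mathcal{N}$ against an arbitrary $\psi_4\in S_{k_4}^{-s_4,w}$. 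The algebraic identity $\Pi_{s_4}(\xi)=\Pi_{-s_4}(-\xi)-s_4\beta\la\xi\ra^{-1}$ then moves $\Pi_{\pm}$ onto $\psi_4$, producing the two fully symmetric quadrilinear integrals \eqref{cunn2} and \eqref{cunn3} in which the null structure is visible in \emph{both} bilinear pairs. After this, the whole argument becomes a case-by-case estimate of a scalar quantity, and the sole tools needed are \eqref{bil2}, \eqref{bil3}, \eqref{bil4}, \eqref{Str4} and the modulation dichotomy supplied by Lemma~\ref{lem:mod}. No atom ever has to be constructed.

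Your proposal instead aims to produce the decomposition $g=f_1+f_2+\sum_{l}g_l$ in \eqref{eq:n-atom} by hand, and for the cap-localized atoms $g_l$ to exhibit, for each $\kappa\in\mathcal{K}_l$, a splitting $P_\kappa g_l=\sum_{(j,\kappa_1)}g_{j,\kappa_1}$ with controlled $\ell^2$-over-$\kappa$ of the $L^1_{t^{s_4}_{j,\kappa_1}}L^2_{x^{s_4}_{j,\kappa_1}}$ norms. You do not say how this decomposition is to be produced from a trilinear product, and this is not a minor omission: the $\ell^2$-over-caps structure, the choice of the admissible frames $(j,\kappa_1)$ with $\dist(\kappa,\kappa_1)\approx 2^{-l}$, and the fact that the $2^{l}$ weight in the definition of $N^{\pm}[k,\kappa]$ must be beaten by the $2^{-l}$ angular gain in \eqref{bil3}, all have to be made to work simultaneously across every angular scale $l$. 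You explicitly acknowledge this is ``precisely the point at which all the machinery \ldots is brought to bear,'' but that is exactly the step that needs a concrete argument and receives none; the duality trick is what lets the paper sidestep it entirely. (Two smaller points: the claim ``$k_1\approx\max(k_2,k_4)$'' does not follow from Fourier support considerations alone --- e.g.\ $k_1=k_2\gg k_3\approx k_4$ is allowed --- and the $\ell^1$-in-$m$ summability you assert for the $\dot{X}^{s_4,-1/2,1}$ atom needs a compensating gain from the trilinear $L^2$ bound, which is supplied in the paper by the frequency gaps and the modulation threshold, but is not established in your sketch.) So the proposal is a plausible inventory of tools but is missing the organizing idea --- the duality reduction to \eqref{cunn2}--\eqref{cunn3} --- that converts a constructive atomic problem into a checkable quadrilinear estimate.
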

We break this down into two building blocks:

\begin{lem}\label{lem:cunn1}
  Under the assumptions of Proposition \ref{pro:thnon} the following
  estimate holds true:
  \begin{equation} \label{cunn1} 2^{\frac34 k_4} \| P_{k_4} ( \la
    \psi_1, \beta \psi_2 \ra \beta \psi_3) \|_{L^\frac{4}{3}_t L^2_x}
    \ls G(\mathbf{k}) \prod_{j=1}^3 2^{k_j} \| \psi_j
    \|_{S^{s_j}_{k_j}}.
  \end{equation}
\end{lem}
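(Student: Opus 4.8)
The plan is to deduce \eqref{cunn1} from the bilinear $L^2_{t,x}$-estimate contained in Proposition \ref{Pbil}, combined with the linear Strichartz estimate of Corollary \ref{cor:str-off}, after a H\"older split that routes the lowest of the three input frequencies into the expensive norm. First I would pass to scalar components: expanding $\la\psi_1,\beta\psi_2\ra=\sum_{a,b}\beta_{ab}\,\psi_{1,a}\,\overline{\psi_{2,b}}$ and $(\beta\psi_3)_d=\sum_c\beta_{dc}\,\psi_{3,c}$, each component of $\la\psi_1,\beta\psi_2\ra\beta\psi_3$ is a finite combination, with bounded coefficients, of products $\phi_1\,\overline{\phi_2}\,\phi_3$ in which $\phi_j$ is a component of $\psi_j$. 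Since $P_{k_4}$ is bounded on $L^\frac43_tL^2_x$ and $\|\phi_j\|_{S^{s_j}_{k_j}}\le\|\psi_j\|_{S^{s_j}_{k_j}}$, it is enough to bound $2^{\frac34 k_4}\|\phi_1\,\overline{\phi_2}\,\phi_3\|_{L^\frac43_tL^2_x}$ by $G(\mathbf{k})\prod_{j=1}^{3}2^{k_j}\|\phi_j\|_{S^{s_j}_{k_j}}$; up to relabeling and complex conjugation (which only flips a sign $s_j$) both sides are symmetric in the three factors, so I may assume $k_3=\min(k_1,k_2,k_3)$.

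Second, I would use the H\"older split (with $\tfrac34=\tfrac14+\tfrac12$ in time and $L^\infty_x\cdot L^2_x\hookrightarrow L^2_x$)
\[
\|\phi_1\,\overline{\phi_2}\,\phi_3\|_{L^\frac43_tL^2_x}\le\|\phi_3\|_{L^4_tL^\infty_x}\,\|\phi_1\,\overline{\phi_2}\|_{L^2_{t,x}},
\]
the essential point being that the lowest-frequency factor $\phi_3$ sits in the expensive $L^4_tL^\infty_x$ norm, so the frequency loss there occurs only at level $2^{k_3}=2^{\min(k_1,k_2,k_3)}$. Corollary \ref{cor:str-off} with $q=\infty$ (or \eqref{Strlow} when $k_3=99$) gives $\|\phi_3\|_{L^4_tL^\infty_x}\ls 2^{\frac54 k_3}\|\phi_3\|_{S^{s_3}_{k_3}}$, and the bilinear $L^2_{t,x}$-estimate proved in the course of Proposition \ref{Pbil} (see \eqref{bas1}, \eqref{bas1a}; it applies to $\phi_1\overline{\phi_2}$ and to $\phi_1\phi_2$, with the gain $2^{\min(k_1,k_2)}$ once the strong norm is assigned to the lower-frequency factor, using $S^\pm_k\hookrightarrow S^{\pm,w}_k$) gives $\|\phi_1\overline{\phi_2}\|_{L^2_{t,x}}\ls 2^{\min(k_1,k_2)}\|\phi_1\|_{S^{s_1}_{k_1}}\|\phi_2\|_{S^{s_2}_{k_2}}$. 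No null structure is required here; it is needed only for the complementary building block, where the $L^2_tL^\infty_x$-component of the atomic norm produces the logarithmic divergence at parallel interactions.

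Combining these two bounds, and writing $k_{\min}\le k_{\mathrm{mid}}\le k_{\max}$ for the ordered values of $k_1,k_2,k_3$ so that $\min(k_1,k_2)=k_{\mathrm{mid}}$, one gets \eqref{cunn1} with
\[
G(\mathbf{k})=2^{\frac34 k_4+\frac14 k_{\min}-k_{\max}}.
\]
It remains to check \eqref{G}. Since $P_{k_4}(\phi_1\overline{\phi_2}\phi_3)$ vanishes unless $k_4\le\max(k_1,k_2,k_3)+5$, one may take $G$ supported there; Cauchy--Schwarz in $k_4$ gives $\sum_{k_4\le k_{\max}+5}2^{\frac34 k_4}d_{k_4}\ls 2^{\frac34 k_{\max}}\|d\|_{\ell^2}$, so that
\[
\sum_{\mathbf{k}}G(\mathbf{k})\,a_{k_1}b_{k_2}c_{k_3}d_{k_4}\ls\|d\|_{\ell^2}\sum_{k_1,k_2,k_3\ge99}2^{-\frac14(k_{\max}-k_{\min})}\,a_{k_1}b_{k_2}c_{k_3},
\]
and the last sum is $\ls\|a\|_{\ell^2}\|b\|_{\ell^2}\|c\|_{\ell^2}$ by a routine argument: order the three indices, fix the middle one, bound the two one-sided geometric sums in the smallest and largest indices by Young's inequality (with $\ell^1$ kernels), and conclude with Cauchy--Schwarz in the middle index.

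The main difficulty is the allocation of norms rather than any individual inequality: one must send the lowest-frequency factor through $L^4_tL^\infty_x$ and keep the remaining higher-frequency pair in $L^2_{t,x}$ via the genuine bilinear estimate (a crude Bernstein bound, or placing a high-frequency factor in $L^4_tL^\infty_x$, produces a $G$ that fails \eqref{G}, either when one input frequency dominates or in the balanced regime $k_1\approx k_2\approx k_3\approx k_4$), and one must use the output-frequency constraint $k_4\le\max(k_1,k_2,k_3)+O(1)$ to make the $k_4$-summation in \eqref{G} converge.
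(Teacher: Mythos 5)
Your decomposition is the same as the paper's in outline: H\"older with $\tfrac34=\tfrac14+\tfrac12$ in time, the lowest of the three input frequencies routed to an $L^4_tL^p_x$ Strichartz norm, and the remaining pair to a bilinear $L^2_tL^q_x$ estimate. The gap is in the concrete bilinear bound you invoke,
\[
\|\phi_1\,\overline{\phi_2}\|_{L^2_{t,x}}\ \ls\ 2^{\min(k_1,k_2)}\,\|\phi_1\|_{S^{s_1}_{k_1}}\,\|\phi_2\|_{S^{s_2,w}_{k_2}},
\]
which is not available. The estimates \eqref{bas1}, \eqref{bas1a} hold at a \emph{single} angular scale: $l_1$ (and $l_2\approx l_1$) is fixed, $1\leq l_1\leq\min(k_1,k_2)$, and the right-hand side does not depend on $l_1$. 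To control $\phi_1\overline{\phi_2}$ one must decompose both factors into caps and sum over $l_1=1,\dots,\min(k_1,k_2)$; without the null structure there is no geometric gain in $l_1$, so the sum produces a multiplicative loss of $\approx\min(k_1,k_2)$. This is precisely why Proposition \ref{Pbil}\,iii) is stated only for $q>2$: the factor $2^{(\frac12-\frac1q)(3k_2-2l_1)}$ appearing in its proof is summable in $l_1$ only when $q>2$. With that logarithmic loss your $G$ becomes $\min(k_1,k_2)\cdot 2^{\frac34 k_4+\frac14 k_{\min}-k_{\max}}$, which on the diagonal $k_1=k_2=k_3=k_4=N$ equals $N$; taking $a=b=c=d$ to be the indicator of $\{N\}$ shows \eqref{G} then fails.

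The repair is small and brings you to the paper's argument: use any $q>2$ in the bilinear step, i.e.\ \eqref{bil4} rather than an $L^2_{t,x}$ bound, and the H\"older-dual $L^4_tL^p_x$ with finite $p$ (so $\tfrac1p+\tfrac1q=\tfrac12$) for the low-frequency factor, still supplied by \eqref{Str4} and \eqref{Strlow}. The paper takes $q=\tfrac{24}{11}$, $p=24$; with $k_1\leq k_2\leq k_3$ and the support constraint $k_4\leq k_3+10$, this yields $G(\mathbf{k})=2^{\frac{k_1-7k_3+6k_4}{8}}$, after which \eqref{G} follows by essentially the summation argument you outlined. Your observation that no null structure is required in this building block is correct; but the mechanism that makes this possible is the room gained by taking $q$ strictly above $2$ (equivalently, by the $L^{4/3}_tL^2_x$ rather than $L^1_tL^2_x$ target and the $2^{-k/4}$ weight in $N^\pm_k$), not an unconditional bilinear $L^2_{t,x}$ estimate.
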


\begin{lem}\label{lem:cunn23}
  Under the assumptions of Proposition \ref{pro:thnon} the following
  estimates hold true:
  \begin{equation} \label{cunn2}
    \begin{split}
      &\Big| \int  \la  \psi_1, \beta \psi_2 \ra \cdot \la \psi_3, \beta \psi_4 \ra dx dt\Big|\\
      \ls{}& G(\mathbf{k})\prod_{j=1}^3 2^{k_j} \| \psi_j
      \|_{S^{s_j}_{k_j}} \cdot 2^{-k_4} \| \psi_4
      \|_{S^{s_4,w}_{k_4}},
    \end{split}
  \end{equation}
  and
  \begin{equation} \label{cunn3}
    \begin{split}
      &\Big| \int  \la  \psi_1, \beta \psi_2 \ra \cdot \la \psi_3, \frac{\psi_4}{\la D \ra} \ra dx dt\Big|\\
      \ls{}& G(\mathbf{k})\prod_{j=1}^3 2^{k_j} \| \psi_j
      \|_{S^{s_j}_{k_j}} \cdot 2^{-k_4} \| \psi_4
      \|_{S^{s_4,w}_{k_4}}.
    \end{split}
  \end{equation}
\end{lem}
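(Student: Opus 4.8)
The plan is to dualize and reduce both \eqref{cunn2} and \eqref{cunn3} to the bilinear $L^2_{t,x}$ estimates of Proposition~\ref{Pbil}. By the Cauchy--Schwarz inequality in $L^2_{t,x}$,
\[
\Big| \int \la\psi_1,\beta\psi_2\ra\,\la\psi_3,\beta\psi_4\ra\,dx\,dt\Big| \le \big\|\la\psi_1,\beta\psi_2\ra\big\|_{L^2_{t,x}}\,\big\|\la\psi_3,\beta\psi_4\ra\big\|_{L^2_{t,x}},
\]
and likewise for \eqref{cunn3} with the second factor replaced by $\la\psi_3,\la D\ra^{-1}\psi_4\ra$. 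Each factor is now bilinear with one input in a strong space $S^{s_j}_{k_j}$ and, for the second factor, $\psi_4$ only in the weak space $S^{s_4,w}_{k_4}$ --- which is exactly the generality in which \eqref{bil2}--\eqref{bil3} are stated, using $S^{\pm}_{k_j}\hookrightarrow S^{\pm,w}_{k_j}$ for the strong inputs. First I would dispose of the sign configurations that are not directly covered: since $\psi_j=\Pi_{s_j}(D)\psi_j$ and the projectors are self-adjoint, each quadratic form can be rewritten, by moving a $\beta$ across with $\Pi_\pm(D)\beta=\beta(\Pi_\mp(D)\mp\tfrac{\beta}{\la D\ra})$, as a product whose null symbol falls under \eqref{PiPi}; opposite-sign pairs are handled using the reflection $\Pi_\pm(\xi)=\Pi_\mp(-\xi)+\mathcal{O}(\la\xi\ra^{-1})$ together with the conjugation symmetry $\overline{S^{\pm}_k}=S^{\mp}_k$, the $\mathcal{O}(\la\xi\ra^{-1})$ errors only improving matters. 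For \eqref{cunn3} there is the further simplification that $\la\psi_3,\la D\ra^{-1}\psi_4\ra=\la\psi_3,\Pi_{s_3}(D)\Pi_{s_4}(D)\la D\ra^{-1}\psi_4\ra$ vanishes unless $s_3=s_4$, in which case the factor $\la D\ra^{-1}$ directly supplies the $2^{-k_4}$ on the right of \eqref{cunn3}.

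With the signs matched, I would combine two bounds for each quadratic factor: the crude one, coming from \eqref{bil2} (respectively from \eqref{bas1a}, summed over angular scales, for the null-form-free product in \eqref{cunn3}), and the angularly refined one from \eqref{bil3}, which at angular scale $2^{-l}$ saves an extra factor $2^{-l}$. Multiplying the two factor bounds already gives \eqref{cunn2} with some $G(\mathbf{k})$ that, in view of the Fourier-support constraint that $\widehat{\la\psi_1,\beta\psi_2\ra}$ and $\widehat{\la\psi_3,\beta\psi_4\ra}$ must overlap, is $\ls 1$ but by itself not summable in the sense of \eqref{G}. To upgrade it I would run a case analysis over the frequency and sign configuration. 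When the output frequency $\approx 2^{k_4}$ is comparable to $\max(k_1,k_2,k_3)$, the Fourier-support overlap forces a frequency gap elsewhere, producing power decay. In the remaining regime, where both quadratic forms are high--high interactions resonating at a low output frequency, the geometry forces $\xi_2\approx-\xi_4$; then either the relevant signs are opposite, so the null symbol \eqref{PiPi} restricts the angular scale to $2^{-l}$ with $l\approx k_4-\max(k_1,k_3)$ and the gain $2^{-l}$, summed against the logarithmically many scales, yields decay, or the signs agree and the space--time resonance function is forced to be of size $\approx 2^{k_4}\gg 2^{\max(k_1,k_3)}$, pushing one of the four inputs or the output into high modulation, where the $\dot X^{\pm,\frac12,\infty}$ component of the (weak) norm entering Proposition~\ref{Pbil} supplies the missing decay. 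This produces a $G(\mathbf{k})$ with genuine exponential decay in every frequency gap, and then \eqref{G} follows by a routine Schur test.

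The main obstacle is exactly this last regime: extracting honest decay, rather than mere boundedness, for $G(\mathbf{k})$ when the output frequency is much smaller than the largest input frequency --- equivalently, when one low-frequency spinor must be paired against two high-frequency ones. There one cannot pass to absolute values, since that would reinstate precisely the logarithmic divergence in the angular summation which the null structure of Subsection~\ref{subsect:nullst} was introduced to remove; instead the argument must keep the null form, invoke the sharp per-cap bilinear estimates \eqref{bil2}/\eqref{bil3} with their $2^{k_1-l}$ prefactors, and carefully resum over spherical caps and over the dyadic frequency and modulation scales, tracking which mechanism --- the angular null gain or the high-modulation gain --- furnishes the decay in each sign case. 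Once such a $G$ is in hand, the proof of Lemma~\ref{lem:cunn23} is complete.
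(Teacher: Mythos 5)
Your outline correctly identifies the two mechanisms that must drive the summable gain $G(\mathbf{k})$ --- the angular gain from the null structure and the high-modulation gain --- and the need for a frequency case analysis. That is indeed the skeleton of the paper's argument. However, a crucial piece is missing: a quantitative statement relating the output modulation of each quadratic form to the angle of interaction and the frequency gap. The paper's Lemma \ref{lem:mod} asserts that if $u_j$ are supported in caps of size $2^{-l}$ at angular separation $\approx 2^{-l}$ and at modulation $\prec m$, then $\widehat{P_k(u_1\bar u_2)}$ is supported where $||\tau|-\la\xi\ra|\approx 2^m$ with $m=k_1+k_2-k-2l$. This is what makes the dichotomy actionable: for fixed frequencies, prescribing a small modulation ceiling on all four inputs forces both quadratic forms into small modulations, which in turn pins down the angle of interaction, and conversely. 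In your proposal the dichotomy is asserted but not derived, and your statement that when ``signs agree'' one is ``forced into high modulation'' inverts the actual role of the signs (by \eqref{PiPi}, for a high$\times$high$\to$low interaction with $\xi_3\approx\xi_4$, the same-sign configuration $s_3=s_4$ is the one that produces an angular null gain, while the opposite-sign one is rescued by the $\mathcal{O}(\la\xi\ra^{-1})$ term). Also, the high-modulation subcases are \emph{not} handled by Cauchy--Schwarz plus Proposition \ref{Pbil} as your reduction suggests; the paper there abandons the bilinear $L^2$ estimates entirely and instead uses the off-diagonal Strichartz bound \eqref{Str4}, Sobolev embedding, the $\dot X^{\pm,1/2,\infty}$ component, and the $L^{4/3}_tL^2_x$ component of the norms to place the high-modulation factor in $L^2$.

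A second concrete error: your claimed simplification for \eqref{cunn3}, that $\la\psi_3,\la D\ra^{-1}\psi_4\ra=\la\psi_3,\Pi_{s_3}(D)\Pi_{s_4}(D)\la D\ra^{-1}\psi_4\ra$ vanishes unless $s_3=s_4$, is false. The operator identity $\la\Pi_{s_3}(D)\psi_3,\phi\ra=\la\psi_3,\Pi_{s_3}(D)\phi\ra$ holds for the $L^2$ pairing but not for the pointwise $\C^4$ inner product that builds the quadratic form; on the Fourier side the symbol is $\Pi_{s_4}(\eta)\Pi_{s_3}(\xi)$ with $\xi\neq\eta$, and these projections at distinct base points are neither equal nor mutually orthogonal. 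In fact the whole point of \eqref{cunn3} is that the null structure is \emph{absent}, and the paper's proof is simply the observation that the $\la D\ra^{-1}$ factor contributes the weight $2^{-k_4}$ directly, which is at least as good as what the null structure would provide --- no sign cancellation is used or available. Once these two points (a precise modulation lemma in the spirit of Lemma \ref{lem:mod}, and the correct, $\la D\ra^{-1}$-driven treatment of \eqref{cunn3}) are supplied, the remainder of your plan does match the paper's case-by-case argument.
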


Before we provide proofs of Lemma \ref{lem:cunn1} and Lemma
\ref{lem:cunn23}, we show how these imply Proposition \ref{pro:thnon}.
\begin{proof}[Proof of Prop. \ref{pro:thnon}]
  Given the structure of the $N^{s_4}_{k_4}$, \eqref{cunn1} is simply
  the $L^\frac43_t L^2_x$ part of \eqref{cunn}. We owe an explanation
  for why \eqref{cunn2} and \eqref{cunn3} imply the atomic part of
  \eqref{cunn}. The nonlinearity \[\mathcal{N}= P_{k_4} \Pi_{s_4}(D) (
  \la \psi_1, \beta \psi_2 \ra \beta \psi_3)\] satisfies
  $\mathcal{N}=\Pi_{s_4}(D) \mathcal{N}$ and needs to be estimated in
  $N^{s_4}_{k_4}$. Using \eqref{dual}, it is enough to test
  $\Pi_{s_4}(D) \mathcal{N}$ against $\psi_4 \in S^{-s_4,w}_{k_4}$ and
  to prove the bound
  \begin{equation} \label{dualin} \int \la \Pi_{s_4}(D) \mathcal{N},
    \psi_4 \ra dx dt \ls G(\mathbf{k}) \prod_{j=1}^3 2^{k_j} \| \psi_j
    \|_{S^{s_j}_{k_j}} \cdot 2^{-k_4} \| \psi_4 \|_{S^{-s_4,w}_{k_4}}.
  \end{equation}
  We have
  \begin{align*}
    \int \la \Pi_{s_4}(D) \mathcal{N}, \psi_4 \ra dx
    & = \int \la \hat{\mathcal{N}}(\xi), \Pi_{s_4}(\xi)  \hat{\psi}_4 (-\xi) \ra d \xi \\
    & = \int \la \hat{\mathcal{N}}(\xi), (\Pi_{-s_4}(-\xi) - s_4 \frac{\beta}{\la \xi \ra}) \hat{\psi}_4 (-\xi) \ra d \xi \\
    & = \int \la \mathcal{N}, \Pi_{-s_4}(D) \psi_4 \ra dx - s_4 \int
    \la \mathcal{N}, \frac\beta{\la D \ra} \psi_4 \ra dx
  \end{align*}
  The contribution of the first term to \eqref{dualin} is
  \[
  \begin{split}
    \int \la \mathcal{N}, \Pi_{-s_4}(D) \psi_4 \ra dx dt
    & = \int \la \la  \psi_1, \beta \psi_2 \ra \beta \psi_3, \Pi_{-s_4}(D) \psi_4 \ra dx dt \\
    & = \int \la  \psi_1, \beta \psi_2 \ra \la  \beta \psi_3, \Pi_{-s_4}(D) \psi_4 \ra dx dt \\
    & = \int \la \psi_1, \beta \psi_2 \ra \la \psi_3, \beta
    \Pi_{-s_4}(D) \psi_4 \ra dx dt.
  \end{split}
  \]
  By splitting each $\psi_j = \Pi_{+}(D) \psi_j + \Pi_{-}(D) \psi_j$,
  its contribution to \eqref{dualin} follows from \eqref{cunn2}. The
  reason why the contribution of the second term above to
  \eqref{dualin} is provided by \eqref{cunn3} is similar.
\end{proof}

\begin{proof}[Proof of Lemma \ref{lem:cunn1}] We prove the result by
  using Strichartz type estimates only, thus we can drop all the $\pm$
  and simply use scalar functions $u_j$ localized at frequency
  $2^{k_j}$ instead. The argument is symmetric with respect to
  $k_1,k_2,k_3$, hence we can simply assume that $k_1 \leq k_2 \leq
  k_3$. Then, the l.h.s.\ of \eqref{cunn1} vanishes unless $k_4\leq k_3+10$,
  and by using \eqref{bil4}, \eqref{Str4} and \eqref{Strlow} we obtain
  \begin{align*}
    \| u_1 u_2 u_3 \|_{L^\frac43_t L^2_x} \ls& \| u_1 \|_{L^4_t
      L^{24}_x} \| u_2 u_3 \|_{L^2_t L^{\frac{24}{11}}_x} \\
    \ls{}& 2^{\frac{27}{24}k_1+k_2+\frac{1}{8}k_3} \| u_1 \|_{S_{k_1}}
    \| u_2 \|_{S_{k_2}} \| u_3 \|_{S_{k_3}}.
  \end{align*}
  From this we obtain
  \begin{align*}
    &2^{\frac34 k_4} \| u_1 u_2 u_3 \|_{L^\frac43_t L^2_x} \ls
    2^{\frac{k_1-7k_3+6k_4}8} 2^{k_1} \| u_1 \|_{S_{k_1}} 2^{k_2} \|
    u_2 \|_{S_{k_2}} 2^{k_3} \| u_3 \|_{S_{k_3}}
  \end{align*}
  from which \eqref{cunn1} follows, because the value of
  $G(\mathbf{k})=2^{\frac{k_1-7k_3+6k_4}8}$ is acceptable for $k_4\leq
  k_3+10$.
\end{proof}

It remains to prove Lemma \ref{lem:cunn23}. Before we start to do so,
we analyze the modulation of a product of two waves. We consider two functions
$\psi_1, \psi_2 \in S^{+}$ where their native modulation is
with respect to the quantity $|\tau - \la \xi \ra|$. However, for
$\la \psi_1 , \beta \psi_2 \ra$ we quantify the output modulation
with respect to $||\tau|-\la \xi \ra|$. The following lemma contains the modulation
localization claim which will be used several times in the argument.

\begin{lem} \label{lem:mod} Let $k,k_1k_2\geq
  100$ and $l\prec \min(k_1,k_2)$, and let
  $\kappa_1,\kappa_2 \in \mathcal{K}_{l}$, with
  $\dist(\kappa_1,\kappa_2)\approx 2^{-l}$, and assume that
  $u_j=\tilde{P}_{k_j,\kappa_j}\tilde{Q}^+_{\prec m}u_j$, where
  \[m=k_1+k_2-k-2l.\]
  Then, \[\widehat{P_k(u_1\overline{u_2})}(\tau,\xi)=0 \text{ unless }
  ||\tau|-\la \xi \ra |\approx 2^{m}.\]
\end{lem}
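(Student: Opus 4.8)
The plan is to pass to the space-time Fourier side, where $\widehat{u_1\overline{u_2}}$ is the convolution of $\widehat{u_1}$ with $\widehat{\overline{u_2}}$, and to read off the size of $||\tau|-\la\xi\ra|$ from the angular and modulation localizations of the two factors.

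First I would record the Fourier support of the product. Since $u_j=\tilde P_{k_j,\kappa_j}\tilde Q^+_{\prec m}u_j$, the distribution $\widehat{u_1}$ is supported where $\xi_1\in\tilde A_{k_1,\kappa_1}$ and $\tau_1=\la\xi_1\ra+\mathcal{O}(2^{m-c})$, with $c\gg1$ the (large, fixed) constant hidden in $\prec$, while $\widehat{\overline{u_2}}(\sigma,\zeta)=\overline{\widehat{u_2}(-\sigma,-\zeta)}$ is supported where $-\zeta\in\tilde A_{k_2,\kappa_2}$ and $\sigma=-\la\zeta\ra+\mathcal{O}(2^{m-c})$. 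Convolving and putting $\xi_2:=\xi-\xi_1$, on the Fourier support of $u_1\overline{u_2}$ --- hence of $P_k(u_1\overline{u_2})$ --- one has $|\xi_1|\approx2^{k_1}$, $|\xi_2|\approx2^{k_2}$, $|\xi|\approx2^{k}$, the directions $\xi_1/|\xi_1|$ and $-\xi_2/|\xi_2|$ lie within $\mathcal{O}(2^{-l})$ of $\kappa_1$, resp.\ $\kappa_2$, and
\[
\tau=\la\xi_1\ra-\la\xi_2\ra+\mathcal{O}(2^{m-c}).
\]
In particular the angle $\Theta:=\angle(\xi_1,-\xi_2)$ satisfies $\Theta\approx2^{-l}$, which follows from $\dist(\kappa_1,\kappa_2)\approx2^{-l}$ and $\diam(\kappa_j)=2^{-l}$ (choosing, if need be, the implicit constant in $\dist(\kappa_1,\kappa_2)\approx 2^{-l}$ large enough that the mild fattening of the caps built into $\tilde P_{k_j,\kappa_j}$ does not spoil the lower bound on $\Theta$).

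The core of the argument is then the elementary identity obtained by squaring out both sides using $\la\xi_j\ra^2=1+|\xi_j|^2$:
\[
(\la\xi_1\ra-\la\xi_2\ra)^2-\la\xi_1+\xi_2\ra^2=1-2\la\xi_1\ra\la\xi_2\ra-2\,\xi_1\cdot\xi_2 .
\]
Writing $\la\xi_1\ra\la\xi_2\ra=|\xi_1||\xi_2|(1+\delta_1)$ with $0<\delta_1\approx2^{-2\min(k_1,k_2)}$ and $\xi_1\cdot\xi_2=-|\xi_1||\xi_2|\cos\Theta=-|\xi_1||\xi_2|(1-\delta_2)$ with $\delta_2:=1-\cos\Theta\approx\Theta^2\approx2^{-2l}$, the right-hand side becomes $1-2|\xi_1||\xi_2|(\delta_1+\delta_2)$. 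Because $l\prec\min(k_1,k_2)$ we have $\delta_1\ll\delta_2$ and $2|\xi_1||\xi_2|\delta_2\approx2^{k_1+k_2-2l}=2^{m+k}\gg1$, so this right-hand side is negative of absolute value $\approx2^{m+k}$; in particular $(\la\xi_1\ra-\la\xi_2\ra)^2\le\la\xi\ra^2$, hence $|\la\xi_1\ra-\la\xi_2\ra|+\la\xi\ra\approx\la\xi\ra\approx2^k$ (using $|\xi|\approx2^k$ and $k\ge100$). Therefore
\[
\big|\,|\la\xi_1\ra-\la\xi_2\ra|-\la\xi\ra\,\big|=\frac{\big|(\la\xi_1\ra-\la\xi_2\ra)^2-\la\xi\ra^2\big|}{|\la\xi_1\ra-\la\xi_2\ra|+\la\xi\ra}\approx\frac{2^{m+k}}{2^{k}}=2^{m},
\]
and since $c\gg1$ the $\mathcal{O}(2^{m-c})$ error in $\tau$ is negligible, giving $||\tau|-\la\xi\ra|\approx2^m$.

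I expect the only slightly delicate point to be the bookkeeping that ensures the angular term dominates: one must verify that $2|\xi_1||\xi_2|\delta_2\approx2^{k_1+k_2-2l}=2^{m+k}$ beats both the mass term $1$ and the error $2|\xi_1||\xi_2|\delta_1\approx2^{|k_1-k_2|}$, which is exactly where the hypothesis $l\prec\min(k_1,k_2)$ is used. One should also note that $m\ls k$ is automatic --- the same identity gives $|\xi|^2=(|\xi_1|-|\xi_2|)^2+2|\xi_1||\xi_2|\delta_2\gs2^{m+k}$, hence $2^{2k}\gs2^{m+k}$ --- so the denominator above is genuinely $\approx2^k$; and in the borderline case where $\la\xi_1\ra-\la\xi_2\ra$ degenerates to size $\ls2^m$, the identity forces $\la\xi\ra^2\ls2^{m+k}$, i.e.\ $k\approx m$, and then $||\tau|-\la\xi\ra|\approx\la\xi\ra\approx2^m$ directly.
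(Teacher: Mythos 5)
Your proof is correct and takes essentially the same approach as the paper: pass to the space-time Fourier side, reduce to the characteristic surfaces (you carry the $\mathcal{O}(2^{m-c})$ input-modulation error explicitly rather than reducing to free waves up front, which is equivalent), and estimate $(\la\xi_1\ra-\la\xi_2\ra)^2-\la\xi\ra^2$ in terms of the interaction angle. Your added bookkeeping (the $\delta_1$/$\delta_2$ split and the borderline-case remark) only makes explicit what the paper leaves implicit.
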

\begin{proof}
  Since the modulation of the inputs are much less than the claimed
  modulation of the output it is enough to prove the argument for free
  solutions. Let $(\xi_1, \la \xi_1 \ra)$ be in the support of
  $\hat{u}_1$ and $(-\xi_2, -\la \xi_2 \ra)$ be in the support of
  $\overline{\hat{u}_2}$. Then, the angle between $\xi_1$ and $\xi_2$
  is $\approx 2^{-l}$. Let $\xi=\xi_1-\xi_2$ be of size $2^{k}$ and
  $\tau=\la \xi_1 \ra -\la \xi_2\ra$. Our aim is to prove that
  \[|\la \xi_1-\xi_2\ra-|\la \xi_1 \ra -\la \xi_2\ra||\approx 2^{m}.\]
  The claim follows from
  \begin{align*}
    &\la \xi_1-\xi_2\ra-|\la \xi_1 \ra -\la \xi_2\ra|=\frac{\la \xi_1-\xi_2\ra^2-(\la \xi_1\ra  -\la \xi_2\ra)^2}{\la \xi_1-\xi_2\ra+|\la \xi_1\ra  -\la \xi_2 \ra|}\\
    =& \frac{2 |\xi_1||\xi_2|(1-\cos(\angle(\xi_1,\xi_2)))}{\la
      \xi_1-\xi_2\ra+|\la \xi_1\ra -\la \xi_2 \ra|}
    +\mathcal{O}(2^{-\min(k,k_1,k_2)})
    \\
    \approx &2^{k_1+k_2-k}\angle(\xi_1,\xi_2)^2
    +\mathcal{O}(2^{-\min(k,k_1,k_2)}).
  \end{align*}
  because by assumption we have $2^{k_1+k_2-k-2l}\gg
  2^{-\min(k,k_1,k_2)}$.
\end{proof}

\begin{proof}[Proof of Lemma \ref{lem:cunn23}]
  It will be obvious from the proof of \eqref{cunn2} that the same
  argument works for \eqref{cunn3} as well. The basic idea in
  \eqref{cunn3} is that the null condition is missing in the term $\la
  \psi_3, \frac{1}{\la D \ra} \psi_4 \ra$. On the other hand the
  factor $\frac{1}{\la D \ra} $ brings a gain of $2^{-k_4}$ in all
  estimates which is better than all gains from exploiting the null
  condition in $\la \psi_3, \beta \psi_4 \ra$.

  Given the choices of sign in \eqref{cunn2} there are a total of $16$
  cases. The first major block in the proof is the use of the results
  in Proposition \ref{Pbil} which are symmetric with respect to the
  choice of $\pm$. The second building block employs frequency and
  modulation localization, Strichartz and Sobolev estimates and it
  works again the same way for different choices of $\pm$ in the
  estimate above. This is why we choose to prove the above estimate
  for the $+$ choice in all terms. It will become evident from the
  argument that the same reasoning will work in all other cases. Thus
  we can drop all the $\pm$ and simply consider $\psi_j \in S^+_{k_j}$
  and write $S_{k_j}=S^+_{k_j}$ instead.

  For brevity, we denote the l.h.s.\ of \eqref{cunn2} as
  \begin{equation*}
    I:=\Big| \int \la \psi_1, \beta \psi_2 \ra \cdot \la \psi_3, \beta
    \psi_4 \ra dx dt\Big|
  \end{equation*}
  and the standard factor on the r.h.s.\ as
  \begin{equation*}
    J:=\prod_{j=1}^3 2^{k_j} \| \psi_j \|_{S_{k_j}} \cdot 2^{-k_4} \| \psi_4 \|_{S^{w}_{k_4}}.
  \end{equation*}
  Since the expression $I$ computes the zero mode of the product 
  $\la \psi_1, \beta \psi_2 \ra \cdot \la \psi_3, \beta \psi_4 \ra$, it follows that
  $\la \psi_1, \beta \psi_2 \ra$ and $\la \psi_3, \beta \psi_4 \ra$ need to be localized 
  at frequencies and modulations of comparable size, where the modulation is computed 
  with respect to $||\tau|-\la \xi \ra|$. This will be repeatedly used in the argument below along with
  the convention that the modulations of $\psi_k, k=1,\ldots,4$ are with respect to $|\tau-\la \xi \ra|$,
  while the modulations of $\la \psi_1, \beta \psi_2 \ra$ and $\la \psi_3, \beta \psi_4 \ra$ are
  with respect to $||\tau|-\la \xi \ra|$. 
  
  We also agree that by the angle of interaction in, say, $\la \psi_1, \beta \psi_2 \ra$
  we mean  the angle made by the frequencies in the support of $\hat{\psi}_1$
  and $\hat{\psi}_2$, where we consider only the supports that bring nontrivial contributions
  to $I$. 
  
  We organize the argument based on the size of the frequencies.

  \underline{Case 1: $k_4 \leq \min(k_1,k_2,k_3) + 10$.}\\
  Using \eqref{bil2} we obtain the bound \[I \ls 2^{k_4-\max(k_1,k_2)}
  J,\] and since $|\max(k_1,k_2) - \max(k_1,k_2,k_3)| \leq 12$ we obtain
  \eqref{cunn2} in this case.

  \underline{Case 2: there are at exactly two $i \in \{ 1,2,3 \}$ such that $k_4 \leq k_i + 10$.}\\
  {\it Case 2 a)} Assume that $k_3 \geq k_4-10$. Since the argument is
  symmetric in $k_1$ and $k_2$, it is enough to consider the scenario
  $k_1 < k_4-10 \leq k_2$. Note that $|k_2-k_3| \leq 12$. 

  We claim that either the angle of interactions in $\la \psi_3,\beta
  \psi_4 \ra$ is $\ls 2^{\frac{k_1-k_4}8}$ or at least one factor $\psi_j, j=1,..,4$ has
   modulation $\gs 2^{\frac{k_1+3k_4}4}$. To see this, suppose
  that the claim is false. Then, the modulation of $\la \psi_1,
  \beta \psi_2 \ra$ is $\ls 2^{\frac{k_1+3k_4}4}$ while it follows
  from Lemma \ref{lem:mod} that the modulation of $\la
  \psi_3, \beta \psi_4 \ra$ is $\gg 2^{\frac{k_1+3k_4}4}$. This is not
  possible, hence the claim is true. Note that in using Lemma \ref{lem:mod}
  we are assuming that $k_3,k_4 \geq 100$. If this is not the case, that is either $k_3=99$
  or $k_4=99$, the argument in Case 1 can be used to obtain the desired estimate. 
	
  In the first subcase, where the angle of interaction in $\la
  \psi_3,\beta \psi_4 \ra$ is smaller than $2^{\frac{k_1-k_4}8}$, we
  use \eqref{bil3} to obtain $I \ls 2^{\frac{k_1-k_4}8} 2^{k_4-k_2} J$
  and this is fine.

  We now consider the second subcase, in which the modulation of the
  factor $\psi_j$ is $\gs 2^{\frac{k_1+3k_4}4} \gs
  2^{\frac{k_1+k_4}2}$ for some $j\in \{1,2,3,4\}$:

  $j=1$: Since $\psi_1$ has modulation $\gs 2^{\frac{k_1+k_4}2}$, we
  can use \eqref{bil2} to estimate $\| \la \psi_3 , \beta \psi_4 \ra
  \|_{L^2}$ and the Sobolev embedding for $\psi_1$ to obtain
  \begin{align*}
    I \ls{}& \| \psi_1 \|_{L^2_tL^\infty_x} \| \psi_2 \|_{L^\infty_t
      L^2_x} 2^{k_3}\| \psi_3 \|_{S_{k_3}} \| \psi_4 \|_{S^w_{k_4}} \\
    \ls{}& 2^{\frac32 k_1} \| \psi_1 \|_{L^2} \| \psi_2 \|_{L^\infty_t L^2_x} 2^{k_3} \| \psi_3 \|_{S_{k_3}} \| \psi_4 \|_{S^w_{k_4}} \\
    \ls{}& 2^{\frac{k_1-k_4}4} 2^{k_4-k_2} J.
  \end{align*}

  $j=2$: Since $\psi_2$ has modulation $\gs 2^{\frac{k_1+k_4}2}$,
  \eqref{bil2} and Sobolev embedding for $\psi_1$ yields
  \begin{align*}
    I \ls{}& \| \psi_1 \|_{L^\infty} \| \psi_2 \|_{L^2} 2^{k_3} \| \psi_3 \|_{S_{k_3}} \| \psi_4 \|_{S^w_{k_4}} \\
    \ls{}& 2^{\frac32 k_1} \| \psi_1 \|_{L^\infty_t L^2_x}
    2^{-\frac{k_1+k_4}4} \| \psi_2 \|_{S_{k_2}}
    2^{k_3} \| \psi_3 \|_{S_{k_3}} \| \psi_4 \|_{S^w_{k_4}} \\
    \ls{}& 2^{\frac{k_1-k_4}4} 2^{k_4-k_2} J.
  \end{align*}
 
  $j=3$: Because $\psi_3$ has modulation $\gs 2^{\frac{k_1+3k_4}4}$,
  we employ \eqref{Str4} to estimate $\| \la \psi_1 , \beta \psi_2
  \ra \|_{L^2}$ and the Sobolev embedding for $\psi_4$ to obtain
  \begin{align*}
    I  \ls{}& \| \psi_1 \|_{L^4_t L^\infty_x} \| \psi_2 \|_{L^4_t L^6_x} \| \psi_3 \|_{L^2} \| \psi_4 \|_{L^\infty_t L^3_x} \\
    \ls{}& 2^{\frac54 k_1} \| \psi_1 \|_{S_{k_1}} 2^{\frac34 k_2} \|
    \psi_2 \|_{S_{k_2}}
    2^{-\frac{k_1+3k_4}8}  \| \psi_3 \|_{S_{k_3}}  2^{\frac{k_4}2} \| \psi_4 \|_{L^\infty_t L^2_x} \\
    \ls{}& 2^{\frac{k_1-k_4}8} 2^{k_4-k_3} J.
  \end{align*}
  
  $j=4$: Here, $\psi_4$ has modulation $\gs 2^{\frac{k_1+k_4}2}$ and
  we use \eqref{Str4} and the Sobolev embedding for $\psi_1$ to obtain
  \begin{align*}
    I \ls{}& \| \psi_1 \|_{L^\infty_t L_x^{12}} \| \psi_2 \|_{L^4_t
      L_x^{\frac{24}5}} \| \psi_3 \|_{L^4_t L_x^{\frac{24}5}}
    \| \psi_4 \|_{L^2} \\
    \ls{}& 2^{\frac54 k_1} \| \psi_1 \|_{L^\infty_t L^2_x} 2^{\frac58
      k_2} \| \psi_2 \|_{S_{k_2}}
    2^{\frac58 k_3} \| \psi_3 \|_{S_{k_3}} 2^{-\frac{k_1+3k_4}8} \| \psi_4 \|_{S_{k_4}^w} \\
    \ls{}& 2^{\frac{k_1-k_4}8} 2^{\frac{k_4-k_2}2} J.
  \end{align*}

  {\it Case 2 b)} Assume now that $k_3 \leq k_4+10$, hence $k_1,k_2
  \geq k_4+10$ and $|k_1-k_2| \leq 12$. Since $\la \psi_1,\beta
  \psi_2 \ra$ is localized at frequency $\approx 2^{k_4}$, the angle of interaction in
  $\la \psi_1,\beta \psi_2 \ra$ is $\ls 2^{k_4-k_2}$. Moreover, we
  claim that either the angle of interactions in $\la \psi_1,\beta
  \psi_2 \ra$ is $\ls 2^{\frac{k_3-k_4}8} 2^{k_4-k_2}$ or at least one
  factor $\psi_j, j=1,..,4$ has modulation $\gs 2^{\frac{k_3+3k_4}4}$. Indeed, if
  the claim is false, it follows from Lemma \ref{lem:mod} that the
  modulation of $\la \psi_1, \beta \psi_2 \ra$ is $\gg
  2^{\frac{k_3+3k_4}4}$ while the modulation of $\la
  \psi_3, \beta \psi_4 \ra$ is $\ll 2^{\frac{k_3+3k_4}4}$. This is not
  possible, hence the claim is true. Note that in using Lemma \ref{lem:mod}
  we are assuming that $k_1,k_2 \geq 100$. If this is not the case, that is either $k_1=99$
  or $k_2=99$, the argument in Case 1 can be used to obtain the desired estimate. 

  In the first subcase the angle of interaction in $\la \psi_1,\beta
  \psi_2 \ra$ is smaller than $2^{\frac{k_3-k_4}8} 2^{k_4-k_2}$. Then,
  we use \eqref{bil3} to obtain $I \ls 2^{\frac{k_3-k_4}8}
  2^{2(k_4-k_2)} J$ which is acceptable.

  In the second subcase, where at least one modulation is high, we
  proceed in a similar manner to Case 2b) above. In fact the estimates
  bring improved factors if one takes into account that the angle of
  interaction in $\la \psi_1,\beta \psi_2 \ra$ is $\ls
  2^{k_4-k_2}$. The details are left to the reader.
 
  \underline{Case 3: $| k_2 -k_4 | \leq 2$ and $k_1,k_3 \leq k_4
    -10$.}  Without restricting the generality of the argument, we may
  assume that $k_1 \leq k_3$.
 
  We claim that either the angle of interaction in $ \la \psi_3, \beta
  \psi_4 \ra$ is $\ls 2^{\frac{k_1-k_3}{16}}$ or one factor $\psi_j, j=1,..,4$ has
  modulation $\gs 2^{\frac{k_1+7k_3}8}$. Indeed, if all modulations of the functions involved are $\ll
  2^{\frac{k_1+7k_3}8}$, then $ \la \psi_1, \beta \psi_2
  \ra$ is localized at modulation $\ls 2^{\frac{k_1+7k_3}8}$.  This
  forces $ \la \psi_3, \beta \psi_4 \ra$ to be localized at modulation
  $\ls 2^{\frac{k_1+7k_3}8}$, hence the angle of interaction is $\ls
  2^{\frac{k_1-k_3}{16}}$  by Lemma \ref{lem:mod}. Note that in using Lemma \ref{lem:mod}
  we are assuming that $k_3,k_4 \geq 100$. If this is not the case, that is either $k_3=99$
  or $k_4=99$, the argument in Case 1 can be used to obtain the desired estimate.

  In the first subcase, when the angle of interaction in $ \la \psi_3,
  \beta \psi_4 \ra$ is $\ls 2^{\frac{k_1-k_3}{16}}$, we use
  \eqref{bil3} to obtain
  \[
  I \ls 2^{\frac{k_1-k_3}{16}} 2^{k_1} \| \psi_1 \|_{S_{k_1}} \|
  \psi_2 \|_{S_{k_2}} 2^{k_3} \| \psi_3 \|_{S_{k_3}} \| \psi_4
  \|_{S_{k_4}^w} \ls 2^{\frac{k_1-k_3}{16}} J.
  \]

  Next, we consider the second subcase when the factor $\psi_j$ has
  modulation $\gs 2^{\frac{k_1+7k_3}8} \gs 2^{\frac{k_1+3k_3}4} $ for
  some $j \in \{1,2,3,4\}$:
 
  $j=1$: The modulation of $\psi_1$ is $\gs 2^{\frac{k_1+3k_3}4}$, so
  we use Sobolev embedding for $\psi_1$ and \eqref{bil2} for $\la
  \psi_3, \beta \psi_4 \ra$ to obtain
  \begin{align*}
    I \ls{}& \| \psi_1 \|_{L^2_t L^\infty_x} \| \psi_2 \|_{L^\infty_t
      L^2_x} 2^{k_3}
    \| \psi_3 \|_{S_{k_3}} \| \psi_4 \|_{S_{k_4}^w} \\
    \ls{}& 2^{\frac32 k_1} \| \psi_1 \|_{L^2} \| \psi_2 \|_{S_{k_2}}
    2^{k_3}
    \| \psi_3 \|_{S_{k_3}} \| \psi_4 \|_{S_{k_4}^w} \\
    \ls{}& 2^{\frac32 k_1} 2^{-\frac{k_1+3k_3}8} \| \psi_1
    \|_{S_{k_1}} \| \psi_2 \|_{S_{k_2}} 2^{k_3}
    \| \psi_3 \|_{S_{k_3}} \| \psi_4 \|_{S_{k_4}^w} \\
    \ls{}& 2^{\frac{k_1-k_3}8} J.
  \end{align*}

  $j=2$: Here, the modulation of $\psi_2$ is $\gs
  2^{\frac{k_1+3k_3}4}$ and we proceed as above to obtain
  \begin{align*}
    I \ls{}& \| \psi_1 \|_{L^\infty} \| \psi_2 \|_{L^2} 2^{k_3}
    \| \psi_3 \|_{S_{k_3}} \| \psi_4 \|_{S_{k_4}^w} \\
    \ls{}& 2^{\frac32 k_1} \| \psi_1 \|_{L^\infty_t L^2_x}
    2^{-\frac{k_1+3k_3}8} \| \psi_2 \|_{S_{k_2}} 2^{k_3}
    \| \psi_3 \|_{S_{k_3}} \| \psi_4 \|_{S_{k_4}^w} \\
    \ls{}& 2^{\frac{k_1-k_3}8} J.
  \end{align*}

  $j=3$: The modulation of $\psi_3$ is $\gs 2^{\frac{k_1+7k_3}8}$, so
  we use \eqref{Str4} and the Sobolev embedding for $\psi_3$ to obtain
  \begin{align*}
    I \ls{}& \| \psi_1 \|_{L^4_t L^\infty_x} \| \psi_2 \|_{L^\infty_t L^2_x}  \| \psi_3 \|_{L^\frac43_t L^\infty_x} \| \psi_4 \|_{L^\infty_t L^2_x} \\
    \ls{}& 2^{\frac54 k_1} \| \psi_1 \|_{S_{k_1}} \| \psi_2
    \|_{S_{k_2}} 2^{\frac32 k_3} \| \psi_3 \|_{L^\frac43_t L^2_x}
    \| \psi_4 \|_{S^{w}_{k_4}} \\
    \ls{}&  2^{\frac54 k_1}  \| \psi_1 \|_{S_{k_1}} \| \psi_2 \|_{S_{k_2}} 2^{\frac32 k_3} 2^{\frac{k_3}4} 2^{-\frac{k_1+7k_3}8} \| \psi_3 \|_{S_{k_3}}   \| \psi_4 \|_{S^{w}_{k_4}} \\
    \ls{}& 2^{\frac{k_1-k_3}8} J.
  \end{align*}

  $j=4$: Since the modulation of $\psi_4$ is $\gs
  2^{\frac{k_1+3k_3}4}$, we use \eqref{Str4} to obtain
  \begin{align*}
    I \ls{}& \| \psi_1 \|_{L^4_t L^\infty_x} \| \psi_2 \|_{L^\infty_t L^2_x}  \| \psi_3 \|_{L^4_t L^\infty_x} \| \psi_4 \|_{L^2} \\
    \ls{}& 2^{\frac54 k_1} \| \psi_1 \|_{S_{k_1}} \| \psi_2 \|_{S_{k_2}}  2^{\frac54 k_3} \| \psi_3 \|_{S_{k_3}} 2^{-\frac{k_1+3k_3}8} \| \psi_4 \|_{S^{w}_{k_4}} \\
    \ls{}& 2^{\frac{k_1-k_3}8} J.
  \end{align*}
  
  \underline{Case 4: $|k_3-k_4| \leq 2$ and $k_1,k_2 \leq k_4 -
    10$}. Without loss of generality we assume $k_1 \leq k_2$.

  The key observation is that either the angle of interaction between
  $\psi_3$ and $\psi_4$ is $\ls 2^{\frac{k_1-k_2}{16}} 2^{k_2-k_3}$ or
  at least one factor has modulation $\gs
  2^{\frac{k_1+7k_2}8}$. Indeed, if all modulations are $\ll
  2^{\frac{k_1+7k_2}8}$, then the modulation of $\la
  \psi_1,\beta\psi_2\ra$ is $\ls 2^{\frac{k_1+7k_2}8}$ and Lemma
  \ref{lem:mod} implies the claim. Note that in using Lemma \ref{lem:mod}
  we are assuming that $k_3,k_4 \geq 100$. If this is not the case, that is either $k_3=99$
  or $k_4=99$, the argument in Case 1 can be used to obtain the desired estimate.

  In the first subcase, when the angle of interaction between $\psi_3$
  and $\psi_4$ is $\ls 2^{\frac{k_1-k_2}{16}} 2^{k_2-k_3}$, we use
  \eqref{bil3} to obtain
  \[
  I \ls 2^{\frac{k_1-k_2}{16}} J.
  \]

  In the second subcase, $\psi_j$ has modulation $\gs
  2^{\frac{k_1+7k_2}8}$ for some $j \in \{1,2,3,4\}$. Since the output
  of $\la \psi_3 , \beta \psi_4 \ra$ is localized at frequency $\ls
  2^{k_2}$ it follows that the angle of interaction is $\ls
  2^{k_2-k_3}$. This will be used in the following case-by-case
  analysis:

  $j=1$: The modulation of $\psi_1$ is $\gs 2^{\frac{k_1+7k_3}{8}}$,
  so we use Sobolev embedding for $\psi_1$ and \eqref{bil3} for $\la
  \psi_3, \beta \psi_4 \ra$ to obtain
  \begin{align*}
    I \ls{}& \| \psi_1 \|_{L^2_t L^\infty_x} \| \psi_2 \|_{L^\infty_t
      L^2_x} \| \la \psi_3,\beta\psi_4 \ra \|_{L^2} \\
    \ls{}& 2^{\frac32 k_1} \| \psi_1 \|_{L^2} \| \psi_2 \|_{S_{k_2}}
    2^{k_3}2^{k_2-k_3} \| \psi_3 \|_{S_{k_3}} \| \psi_4 \|_{S_{k_4}^w} \\
    \ls{}& 2^{\frac32 k_1} 2^{-\frac{k_1+7k_2}{16}} \| \psi_1
    \|_{S_{k_1}}
    \| \psi_2 \|_{S_{k_2}} 2^{k_2} \| \psi_3 \|_{S_{k_3}} \| \psi_4 \|_{S_{k_4}^w} \\
    \ls{}& 2^{-\frac{7}{16}(k_2-k_1)} J.
  \end{align*}

  $j=2$: The modulation of $\psi_2$ is $\gs 2^{\frac{k_1+7k_3}8}$, so
  we use Sobolev embedding for $\psi_1$ and \eqref{bil3} for $\la
  \psi_3, \beta \psi_4 \ra$ to obtain
  \begin{align*}
    I \ls{}& \| \psi_1 \|_{L^\infty} \| \psi_2 \|_{L^2} \| \la \psi_3,\beta\psi_4 \ra \|_{L^2} \\
    \ls{}& 2^{\frac32 k_1} \| \psi_1 \|_{L^\infty_t L^2_x}
    2^{-\frac{k_1+7k_2}{16}}\| \psi_2 \|_{S_{k_2}}
    2^{k_3}2^{k_2-k_3} \| \psi_3 \|_{S_{k_3}} \| \psi_4 \|_{S_{k_4}^w} \\
    \ls{}& 2^{-\frac{7}{16}(k_2-k_1)} J.
  \end{align*}

  $j=3$: The modulation of $\psi_3$ is $\gs 2^{\frac{k_1+7k_3}8}$, so
  we use \eqref{Str4} for $\psi_1$ and $\psi_2$ and obtain
  \begin{align*}
    I \ls{}& \| \psi_1 \|_{L^4_tL^\infty_x} \| \psi_2 \|_{L^4_tL^\infty_x} \|\psi_3\|_{L^2}\|\psi_4\|_{L^\infty_tL^2_x} \\
    \ls{}& 2^{\frac54 k_1} \| \psi_1 \|_{L^\infty_t L^2_x} 2^{\frac54
      k_2}\| \psi_2 \|_{S_{k_2}}
    2^{-\frac{k_1+7k_2}{16}}\| \psi_3 \|_{S_{k_3}} \| \psi_4 \|_{S_{k_4}^w} \\
    \ls{}& 2^{-\frac{3}{16}(k_2-k_1)} J.
  \end{align*}

  $j=4$: The modulation of $\psi_4$ is $\gs 2^{\frac{k_1+7k_3}8}$, so
  after exchanging the roles of $\psi_3$ and $\psi_4$ the same
  argument as in case $j=3$ applies.
\end{proof}

Based on Theorem \ref{thm:thnon} we can now prove Theorem
\ref{thm:main} concerning the global well-posedness and scattering of
the cubic Dirac equation for small data.
\begin{proof}[Proof of Theorem \ref{thm:main}]
  In Section \ref{sect:setupD} we reduced the study of the cubic Dirac
  equation to the study of the system \eqref{CDsys}.  In the
  nonlinearity of \eqref{CDsys} we split the functions into
  $\psi=\psi_+ + \psi_-$ where $\psi_\pm = \Pi_\pm \psi$ and note that
  $\psi_\pm = \Pi_\pm \psi_\pm$. Using the nonlinear estimate in
  Theorem \ref{thm:thnon} and the linear estimates in Corollary
  \ref{corlin}, a standard fixed point argument in a small ball in the
  space $S^{+,1} \times S^{-,1}$ gives global existence, uniqueness
  and Lipschitz continuity of the flow map for small initial data
  $(\psi_+(0),\psi_-(0))\in H^1(\R^3)\times H^1(\R^3)$. Concerning
  scattering, we simply argue as follows: Let $\psi \in S^1$ be a solution to
  the cubic Dirac equation constructed above, where $S^1$ is the space
  of all $\psi$ such that $\Pi_\pm\psi \in S^{\pm,1}$. Choose initial
  data $\psi_n(0)\in H^2(\R^3)$ with
  $\|\psi_n(0)-\psi(0)\|_{H^1(\R^3)}\to 0$ as $n \to \infty$, and
  denote the corresponding solutions in $S^1$ by $\psi_n$. By
  continuity we have $\|\psi_n- \psi\|_{S^1}\to 0$ as $n \to
  \infty$. From the scattering result in \cite[Theorem 1]{MNO} we
  infer that there exist solutions to the linear Dirac equation
  $\varrho_n^{\pm \infty}$ such that $\|\psi_n(t)- \varrho_n^{\pm \infty}(t)\|_{H^2}\to 0$ as
  $t \to \pm\infty$. Let $\eps>0$. There exists $n_0$, such that for
  $n,m\geq n_0$ and sufficiently large $\pm t$ we have
  \begin{align*}
    &\|\varrho_n^{\pm \infty}(0) -\varrho_m^{\pm \infty}(0)\|_{H^1}=\|\varrho_n^{\pm \infty}(t) -\varrho_m^{\pm \infty}(t)\|_{H^1}\\
    \leq{}& \|\varrho_n^{\pm \infty}(t) -\psi_n(t)\|_{H^1} +\|\psi_n(t)
    -\psi_m(t)\|_{H^1}+\|\psi_m(t)
    -\varrho_m^{\pm \infty}(t)\|_{H^1}<\eps,
  \end{align*}
  hence the Cauchy-sequence $\varrho^{\pm \infty}(0)$ converges to some $\varrho^{\pm \infty}
  \in H^1(\R^3)$. Let $\eps>0$. Then, $n$ can be chosen sufficiently
  large such that for the corresponding solution $\varrho^{\pm \infty}$ to the
  linear Dirac equation with $\varrho^{\pm \infty}(0)=\varrho^{\pm \infty}$ it follows that
  \begin{align*}
    &\limsup_{t \to \pm \infty} \|\psi(t)-\varrho^{\pm \infty}(t)\|_{H^1}
    \leq \sup_{t \in \R}\|\psi(t)-\psi_n(t)\|_{H^1}\\
    &\qquad +\lim_{t \to \pm \infty}\|\psi_n(t)
    -\varrho_n^{\pm \infty}(t)\|_{H^1}+\sup_{t \in \R}\|\varrho_n^{\pm \infty}(t)
    -\varrho^{\pm \infty}(t)\|_{H^1}<\eps,
  \end{align*}
  which proves the scattering claim.
\end{proof}

\appendix
\section{Proofs of the decay estimates}\label{app:decay}
Here, we provide proofs of the well-known decay
estimates in Section \ref{sect:le}, which clearly reveal the frequency
dependence and which are self-contained in the important case $k\geq
1$. We do not claim originality here, compare e.g.\ \cite[Section
2.5]{NaSch}.
\subsection{Proof of Lemma \ref{lem:decay}
  i)}\label{app:l1}
By recaling it suffices to prove the estimate for $k \in \Z$, $k \leq
1$.  Let $\zeta\in C^\infty_c(\R^3)$ be a nonnegative, radial function
with $\zeta(\xi)=1$ for $|\xi|\leq 2^{4}$.  We identify the
oscillatory integral
\[I(t,x)=\int_{\R^3} e^{i(x,t)\cdot(\xi,\la \xi
  \ra)}\zeta(\xi)\,d\xi \] as the (inverse) Fourier transform of the
surface measure of $ \{(\tau,\xi)\in \R^4: \tau = \la \xi \ra\} $
which is induced by $(1+\frac{|\xi|^2}{\la \xi
  \ra^2})^{-\frac12}\zeta(\xi) d\xi$.  In the support of $\zeta$ the
above surface has non-vanishing principal curvatures, and the
classical result on Fourier transforms of surface carried measures
\cite[p. 348, Theorem 1]{St} implies
\[
|I(t,x)| \ls (1+|(t,x)|)^{-\frac32}.
\]
With $f_k(\xi): = \tilde{\chi}_k^2(\xi)$, it holds that
$\check{f_k}(x)=2^{3k}\check{f_1}(2^k x)$, which shows
$\|\check{f_k}\|_{L^1(\R^3)}=\|\check{f_1}\|_{L^1(\R^3)}$. For $k \leq
1$ we obtain $K_k$ as the (spatial) convolution of $I(t ,\cdot)$ and
$\check{f_k}$, which implies
\[
|K_{k}(t,x)| \ls (1+|(t,x)|)^{-\frac32}
\]
by Young's inequality. Estimate \eqref{eq:smallk} follows in the case
$|(t,x)|>2^{-2k}$. In the remaining case $|(t,x)|\leq 2^{-2k}$ the
estimate \eqref{eq:smallk} is trivial.

\subsection{Proof of Lemma \ref{lem:decay} ii)}\label{app:l2}
Consider
\begin{equation}\label{eq:p}
  P_k(s,y)=\int_{\R^3}e^{iy\cdot\xi}e^{is\la \xi\ra_{k}}\zeta(\xi)\,d\xi.
\end{equation}

We claim that for all $k\in \Z, k \gs 1$ and $s\in \R, y \in \R^3$ the
following estimates hold true:
\begin{align}\label{eq:stat-a}
  |P_k(s,y)|\ls& (1+|(s,y)|)^{-1},\\
  \label{eq:stat-b}
  |P_k(s,y)|\ls& 2^k(1+|(s,y)|)^{-\frac32}.
\end{align}
By rescaling $(\tau,\xi) \rightarrow 2^k(\tau,\xi)$, we have
$K_k(t,x)=2^{3k}P_k(2^k t, 2^k x)$, where
$\zeta(\xi)=\tilde{\chi}^2_1(|\xi|)$. Hence, \eqref{eq:bigk} follows
from \eqref{eq:stat-a} and \eqref{eq:stat-b}, which we will prove
below. Because of the trivial bound
\begin{equation}\label{eq:trivial}
  |P_k(s,y)|\leq \|\zeta\|_{L^1(\R^3)}
\end{equation}
it is enough to treat the case $|(s,y)|\geq 1$.

The function $y \mapsto P_k(s,y)$ is radial, so it suffices to
consider $y=(|y|,0,0)$. By introducing polar coordinates, we obtain
\begin{align}
  P_k(s,(|y|,0,0))=&2\pi \int_0^\infty\int_0^\pi
  e^{ir|y|\cos(\phi)}e^{is\la r \ra_{k}} r^2\zeta(r)\sin(\phi)\, d\phi dr\nonumber\\
  =&2\pi \int_0^\infty\int_{-1}^1 e^{i(r|y|z+s\la r \ra_{k})}
  r^2\zeta(r)\, dz dr \label{eq:p-polar}
\end{align}

\underline{Case $|s|>2^4|y|$:} For a given $z \in [-1,1]$ let
$\phi(r):=r\frac{|y|z}{s}+\la r\ra_k$, such that the phase in
\eqref{eq:p-polar} is given by $s\phi(r)$. Notice that
$\phi'(r)=\frac{|y|z}{s}+\frac{r}{\la r\ra_k}$, so that
$|\phi'(r)|\geq c >0$ and for all $j \geq 2$ it holds
$|\phi^{(j)}(r)|\leq c_j $ for all $r\in \supp(\zeta)$, $z \in
[-1,1]$, $y \in \R^3$, $s\in \R$ and $k \in \N_0$. Multiple
integration by parts with respect to $r$ yields
\begin{equation*}
  |P_k(s,(|y|,0,0))|\leq 4\pi \sup_{z \in [-1,1]}\Big|\int_0^\infty e^{is\phi(r)} r^2\zeta(r)\, dr \Big|\\
  \leq C_N |s|^{-N}
\end{equation*}
for all $N \in \N$ and the claims \eqref{eq:stat-a} and
\eqref{eq:stat-b} follow in this case.

\underline{Case $|s|<2^{-4}|y|$:} The same argument as above applies
if we rewrite the phase function as $|y|\tilde{\phi}(r)$ with
$\tilde{\phi}(r)=rz+\frac{s}{|y|}\la r \ra_{k}$.

\underline{Case $2^{-4}|y|\leq |s|\leq 2^4|y|$:} Integrating
\eqref{eq:p-polar} in $z$ yields
\begin{equation}
  \begin{split}
    P_k(s,(|y|,0,0))={}&\frac{2\pi}{i|y|} \int_0^\infty e^{i(s\la r \ra_{k}+r|y|)} r\zeta(r) \,dr \\
    &{}-\frac{2\pi}{i|y|} \int_0^\infty e^{i(s\la r \ra_{k}-r|y|)}
    r\zeta(r) \,dr.
  \end{split}
  \label{eq:p-polar-int}
\end{equation}
which implies
\begin{equation*}
  |P_k(s,(|y|,0,0))|\leq C |y|^{-1}
\end{equation*}
and the first claim \eqref{eq:stat-a} follows. We can rewrite
\eqref{eq:p-polar-int} as
\begin{equation}\label{eq:p-int-parts}
  \begin{split}
    & P_k(s,(|y|,0,0))=\frac{2\pi}{i|y|}(I(s,y)-\overline{I(-s,y)}),\\
    &\text{ where } I(s,y):= \int_0^\infty e^{i(s\la r \ra_{k}+r|y|)}
    r\zeta(r) \,dr.
  \end{split}
\end{equation}

Let us define the phase function $\varphi(r)=\la r
\ra_{k}+r\frac{|y|}{s}$. We have $\varphi'(r)=\frac{r}{\la r
  \ra_k}+\frac{|y|}{s}$, $\varphi''(r)=\frac{2^{-2k}}{\la r \ra_k^3}$
and for $j \geq 2$ $|\varphi^{(j)}(r)|\leq c_j $ for $r \in
\supp(\zeta)$, $y \in \R^3$, $s\in \R$ and $k \in \N_0$, see
\eqref{eq:jap}.  Notice that $\varphi'$ has a unique zero or does not
vanish. Let us consider the case where $\varphi'(r_0)=0$ for some $r_0
\in \supp(\zeta)$. Then, we have $|\varphi'(r)|\geq c 2^{-2k} |r-r_0|$
in the support of $\zeta$. Let $\delta:=2^k|s|^{-\frac12}$. In case
$\delta\geq 2^{-4}$ the claim \eqref{eq:stat-b} follows from
\eqref{eq:stat-a}, so we may assume that $\delta<2^{-4}$ and we
decompose $\int_0^\infty dr =\int_0^{r_0-\delta} dr+
\int_{r_0-\delta}^{r_0+\delta} dr+ \int_{r_0+\delta}^\infty dr $, in
which case we obtain
\begin{align*}
  \Big|\int_0^{r_0-\delta} e^{i(s\la r \ra_{k}+r|y|)} r\zeta(r) \,dr
  \Big| ={}&|s|^{-1}\Big|\int_0^{r_0-\delta} e^{i(s\la r
    \ra_{k}+r|y|)} \frac{\mathrm
    d}{\mathrm dr}\frac{r\zeta(r)}{\varphi'(r)} \,dr \Big|\\
  \leq{} & |s|^{-1}\int_0^{r_0-\delta}
  \Big|\frac{(r\zeta(r))'}{\varphi'(r)} \Big|\,dr \\
  &{}+|s|^{-1} \int_0^{r_0-\delta}
  r\zeta(r) \big| (\varphi'(r)^{-1})' \big|\,dr\\
  \leq{} & c2^{2k}(\delta |s|)^{-1}+c |s|^{-1}\int_0^{r_0-\delta}
  \big| (\varphi'(r)^{-1})' \big|\,dr\\
  \leq{}& c 2^{2k}(\delta |s|)^{-1}
\end{align*}
where we have used that $(\varphi'(r))^{-1}$ is decreasing in the
domain of integration, which implies that
\begin{equation*}\int_0^{r_0-\delta} \big|
  (\varphi'(r)^{-1})' \big|\,dr\leq
  |\varphi'(r_0-\delta)^{-1}|
  \leq  c2^{2k}(\delta |s|)^{-1}
\end{equation*}
A similar argument applies to the third part, and the second
contribution is trivially bounded by $c\delta$, such that altogether
we obtain
\begin{equation}\label{eq:ibound}
  |I(s,y)|\leq c 2^{2k}(\delta |s|)^{-1}+c\delta\leq c 2^{k}|s|^{-\frac12}.
\end{equation}
The claim \eqref{eq:stat-b} follows by combining \eqref{eq:ibound} and
\eqref{eq:p-int-parts}.  In the remaining case where $\varphi'\not= 0$
in $\supp(\zeta)$, we have $\varphi'(r)\geq c>0$ for all $r\in
\supp(\zeta)$ and we obtain $|I(s,y)|\leq C_N |s|^{-N}$ for every
$N\in \N$ by multiple integration by parts with respect to $r$.

\bibliographystyle{amsplain} \bibliography{cubic-dirac-refs}

\end{document}